\documentclass[11pt,reqno]{amsart}

\usepackage[T1]{fontenc}
\IfFileExists{lmodern.sty}{\usepackage{lmodern}}{}
\usepackage[a4paper,left=25mm,right=25mm,top=26mm,bottom=28mm]{geometry}
\usepackage{amsmath,amssymb,mathrsfs,esint}
\IfFileExists{microtype.sty}{\usepackage{microtype}}{}
\IfFileExists{needspace.sty}{\usepackage{needspace}}{%
  \newcommand{\Needspace}[1]{\par}%
}

\IfFileExists{hyperref.sty}{%
  \usepackage[hidelinks]{hyperref}
}{%
  \usepackage{url}
  \newcommand{\href}[2]{#2}
  \newcommand{\texorpdfstring}[2]{#1}
}

\allowdisplaybreaks[2]
\numberwithin{equation}{section}

\makeatletter
\renewcommand\section{\@startsection{section}{1}{\z@}%
  {4.2ex \@plus 1.2ex \@minus .4ex}%
  {2.0ex \@plus .3ex}%
  {\normalfont\large\bfseries\centering}}
\renewcommand\subsection{\@startsection{subsection}{2}{\z@}%
  {2.2ex \@plus .8ex \@minus .3ex}%
  {.8ex \@plus .2ex}%
  {\normalfont\normalsize\bfseries}}
\renewcommand\subsubsection{\@startsection{subsubsection}{3}{\z@}%
  {1.8ex \@plus .6ex \@minus .2ex}%
  {.6ex \@plus .2ex}%
  {\normalfont\normalsize\bfseries}}
\makeatother

\newtheoremstyle{strongplain}%
  {10pt}
  {10pt}
  {\itshape}
  {0pt}
  {\bfseries}
  {.}
  {0.65em}
  {}

\newtheoremstyle{strongremark}%
  {8pt}
  {8pt}
  {\normalfont}
  {0pt}
  {\bfseries}
  {.}
  {0.65em}
  {}

\theoremstyle{strongplain}
\newtheorem{theorem}{Theorem}[section]
\newtheorem{lemma}[theorem]{Lemma}
\newtheorem{proposition}[theorem]{Proposition}
\newtheorem{corollary}[theorem]{Corollary}
\theoremstyle{strongremark}

\newtheorem{example}[theorem]{Example}

\makeatletter
\renewenvironment{proof}[1][\proofname]{%
  \par\addvspace{4pt}%
  \pushQED{\qed}%
  \normalfont
  \topsep7\p@\@plus3\p@\@minus2\p@
  \trivlist
  \item[\hskip\labelsep{\bfseries\boldmath #1\@addpunct{.}\unboldmath}]%
  \ignorespaces
}{%
  \popQED\endtrivlist\@endpefalse
  \addvspace{4pt}%
}
\makeatother

\newcommand{\proofstep}[1]{%
  \par\addvspace{\medskipamount}%
  \Needspace{5\baselineskip}%
  \noindent{\bfseries\boldmath #1}\par\nobreak\smallskip
}

\newcommand{\Ric}{\operatorname{Ric}}
\newcommand{\diver}{\operatorname{div}}
\newcommand{\supp}{\operatorname{supp}}

\newcommand{\loc}{\mathrm{loc}}
\newcommand{\dd}{\,\mathrm d}
\newcommand{\dmu}{\,\mathrm d\mu}

\newcommand{\A}{\mathcal A}

\title[Liouville theorems for exponential gradient equations]{Liouville theorems for $-\Delta_{m,g}u=\lambda e^u|\nabla u|^q$\\ in the full range of gradient exponents}
\makeatletter
\let\SWZ@setauthors\@setauthors
\def\@setauthors{%
  \begingroup
  \def\andify{\nxandlist{\unskip\qquad}{\unskip\qquad}{\unskip\qquad}}%
  \let\author@andify\andify
  \SWZ@setauthors
  \endgroup
}
\makeatother
\author{Yuhua Sun}
\address{School of Mathematical Sciences and LPMC, Nankai University,
Tianjin 300071, P. R. China}
\email{sunyuhua@nankai.edu.cn}
\author{Zhen Wang}
\address{School of Mathematical Sciences and LPMC, Nankai University,
Tianjin 300071, P. R. China}
\email{lrjwz89757@163.com}
\author{Mengyu Zhang}
\address{School of Mathematical Sciences and LPMC, Nankai University,
Tianjin 300071, P. R. China}
\email{zzmy111@126.com}
\date{}
\subjclass[1991]{Primary 35B53; Secondary 35J92, 58J05}
\keywords{$m$-Laplace equation, exponential gradient term, Liouville theorem, Nash--Moser iteration, nonnegative Ricci curvature}
\begin{document}
\pagestyle{plain}
\begin{abstract}
We study the equation
\[
-\Delta_{m,g}u=\lambda e^u|\nabla u|^q
\]
on complete, connected, noncompact Riemannian manifolds of dimension $n\ge2$ with $\Ric_g\ge0$. For every $m>1$, $q\in\mathbb R$ and $\lambda>0$, we prove that every positive $C^1$ weak solution is constant when $q>0$, while no such solution exists when $q\le0$. For $q>m$, we derive a local gradient estimate independent of the solution by Nash--Moser iteration, which leads to automatic boundedness and resolves the question raised by Lu and Zhu~\cite[Remark~1.6(B)]{LZ}. For $m-1<q<m$, we use an auxiliary function and a divergence inequality to extend the Liouville result beyond the range covered by existing gradient-inequality methods. The remaining cases are treated by suitable changes of variables and integral estimates. The conclusions also extend to real-valued solutions bounded below.
\end{abstract}
\maketitle

\section{Introduction and main results}\label{sec:intro}

Throughout the paper, unless stated otherwise, $(M^n,g)$ is a
connected, complete, noncompact Riemannian manifold with $n\ge2$ and
$\Ric_g\ge0$. We study positive solutions of
\begin{equation}\label{eq:main}
 -\Delta_{m,g}u=\lambda e^u|\nabla u|^q,
 \qquad m>1,\quad q\in\mathbb R,\quad \lambda>0,
\end{equation}
where $\Delta_{m,g}u=\diver_g(|\nabla u|^{m-2}\nabla u)$.
For $q=0$, the right-hand side is $\lambda e^u$ also at critical points;
for $q<0$, the singular gradient factor is understood in the locally
integrable sense specified below. Our aim is to determine the Liouville
property of~\eqref{eq:main} for the entire range of $q$, without assuming
an upper bound on $u$.

The starting point is the pure-gradient equation
$-\Delta u=|\nabla u|^q$. Lions~\cite{Lions85} established constancy of
entire solutions for $q>1$, and Bidaut-V\'eron, Garc\'ia-Huidobro and
V\'eron~\cite{BGV14} extended the theory to quasilinear equations in
the range $q>m-1$. On manifolds, gradient estimates and Liouville
theorems under a Ricci lower bound were obtained by Wang and
Zhang~\cite{WZ11} for $m$-harmonic functions and by Han, He and
Wang~\cite{HanHeWang25} for Hamilton--Jacobi equations.

A closely related model is the power product equation
\[
 -\Delta_m u=u^r|\nabla u|^q\qquad\text{in }\mathbb R^n.
\]
For $m=2$, Bidaut-V\'eron, Garc\'ia-Huidobro and V\'eron~\cite{BGV19}
obtained Liouville regions and the exact threshold for nonconstant
positive radial entire solutions. Further results were established by
Filippucci, Pucci and Souplet~\cite{FPS20} for bounded solutions with
$q>2$, and by Bidaut-V\'eron~\cite{BV21}, who removed boundedness in
the range $1<m<n$, $r\ge0$ and $q\ge m$. Chang, Hu and
Zhang~\cite{CHZ22} treated additional regions with $q<m$.
For $m=2$, Ma and Wu~\cite{MaWu26} enlarged the Liouville region near
the radial threshold, while Lu~\cite{Lu26} obtained improved Liouville
theorems and local estimates, also considering the manifold setting.

Recent progress has substantially clarified the power model.
Lu and Zhu~\cite{LZ} established Liouville results for a broad class
of product nonlinearities when $q>m$.
For the semilinear case, Liang and Zhang~\cite{LiangZhang26}
improved the Liouville ranges and universal estimates.
Most recently, Wu, Yan and Zhu~\cite{WYZ26} showed that, for
$1<m<n$, every positive entire solution of the power equation is
constant for all $r\in\mathbb R$ when $q\ge m-1$; for
$0<q<m-1$ they also covered the region below the second critical
curve. In a companion paper~\cite{WYZ26II}, they classified positive
entire solutions on that critical curve, without imposing a global
growth or energy assumption. On manifolds with $\Ric_g\ge0$,
He, Hu and Wang~\cite{HeHuWang26} used Nash--Moser iteration for
the power model, while Dou, Shi, Wu and Zhu~\cite{DSWZ25}
obtained Liouville and classification results for certain general
reaction functions. The latter hypotheses, however, do not directly
cover $f(u)=e^u$ on the whole positive half-line.

For the exponential product equation~\eqref{eq:main}, the known
Liouville ranges are more limited. If $m\ge n$, the volume growth
under $\Ric_g\ge0$ implies
$m$-parabolicity, so positive $m$-superharmonic functions are constant
\cite{SX21}. Hence the classification is already settled for every
$q$ in this case. The essential range is $1<m<n$. Set
\[
 q_*:=\frac{n(m-1)}{n-1},\qquad m-1<q_*<m.
\]
Since $u>0$, equation~\eqref{eq:main} implies
\[
 -\Delta_{m,g}u\ge\lambda|\nabla u|^q.
\]
Integral test-function and volume-growth methods for differential
inequalities~\cite{MP99,Fil09,Sun15,WX16,SX21,SXX22}, together with
Goffi's area-growth criterion~\cite{Goffi23}, therefore give, on
manifolds with $\Ric_g\ge0$ and without an upper bound on $u$,
\[
 \begin{cases}
 \text{nonexistence of positive solutions},&q\le0,\\
 \text{constancy of positive solutions},&0<q\le q_*.
 \end{cases}
\]
In particular, these conclusions hold in $\mathbb R^n$.

The exponent $q=m$ is also settled: the increasing change of variables
\[
 U=\int_0^u\exp\!\left(\frac{\lambda e^t}{m-1}\right)\dd t
\]
produces a positive $m$-harmonic function, which is constant under
$\Ric_g\ge0$~\cite{WZ11}. For $q>m$, Lu and
Zhu~\cite[Corollary~1.5]{LZ} proved constancy of \emph{bounded}
positive entire solutions in $\mathbb R^n$ and explicitly asked
whether the upper-bound assumption can be removed
\cite[Remark~1.6(B)]{LZ}. Thus, when $1<m<n$, the known results leave
two distinct questions for the exponential product equation:
the interval $q_*<q<m$, not covered even for bounded positive
solutions, and the range $q>m$ without an a priori upper bound.
Neither gap is closed by simply replacing $e^u$ with a power of $u$.
The related additive equation $-\Delta u=e^u+c|\nabla u|^q$
studied in~\cite{BGV26exp} has a different reaction structure.

We resolve both questions on manifolds with $\Ric_g\ge0$.
For $q>m$, Nash--Moser iteration yields a local gradient estimate
independent of $\sup u$; this leads to automatic boundedness and then
constancy. For $m-1<q<m$, an auxiliary function satisfies a uniformly
elliptic divergence inequality, and a global vanishing argument gives
constancy throughout the interval. The remaining cases follow from
the gradient inequality, a change of variables, or direct cutoff
estimates. Together these arguments establish the full classification
for every $q\in\mathbb R$.

If the right-hand side is instead $\lambda|e^u\nabla u|^q
=\lambda e^{qu}|\nabla u|^q$, then for $q>0$ the substitution
$v=qu$ reduces it to~\eqref{eq:main} with a different positive
coefficient. Thus the constancy conclusion also applies to this form;
the nonexistence result for $q\le0$ refers to~\eqref{eq:main}.

\subsection*{Main results}

Write $B(x,R)$ for a geodesic ball and $\dmu=\dd\operatorname{vol}_g$.
We call $u$ a $C^1$ weak solution on $\Omega\subset M$ if
$u\in C^1_{\loc}(\Omega)\cap W^{1,m}_{\loc}(\Omega)$,
$e^u|\nabla u|^q\in L^1_{\loc}(\Omega)$, and
\begin{equation}\label{eq:weak}
 \int_\Omega |\nabla u|^{m-2}\langle\nabla u,\nabla\varphi\rangle\dmu
 =\lambda\int_\Omega e^u|\nabla u|^q\varphi\dmu,
 \qquad \forall \varphi\in C_c^\infty(\Omega).
\end{equation}
For $q<0$, the factor $|\nabla u|^q$ is interpreted as $+\infty$ where $\nabla u=0$.

\begin{theorem}\label{thm:liouville}
Assume $\Ric_g\ge0$. For every $m>1$, $q\in\mathbb R$ and $\lambda>0$,
equation~\eqref{eq:main} admits no nonconstant positive $C^1$
weak solutions on $M$.
\end{theorem}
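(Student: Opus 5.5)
The proof splits into cases according to $m$ and $q$, following the roadmap in the introduction. If $m\ge n$, then $\Ric_g\ge0$ gives $\mu(B(x,R))\le CR^n$, so $M$ is $m$-parabolic. Since $-\Delta_{m,g}u\ge0$ weakly, every positive solution is constant \cite{SX21}; if $q\le 0$, a constant cannot solve the equation, because the right-hand side is positive or $+\infty$. So assume $1<m<n$. For $q\le q_*$ we use $u>0$ to get $-\Delta_{m,g}u\ge\lambda|\nabla u|^q$. The test-function and volume-growth results \cite{MP99,SX21,Goffi23} then give nonexistence for $q\le0$ and constancy for $0<q\le q_*$. For $q=m$, the substitution $U=\int_0^u\exp(\lambda e^t/(m-1))\dd t$ gives a positive $m$-harmonic function. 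By the gradient estimate of \cite{WZ11}, $U$ is constant, hence so is $u$. Two ranges remain: $q>m$, and $q_*<q<m$, which we treat as part of $m-1<q<m$.

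\textbf{Case $q>m$.} We run a Nash--Moser iteration for $w=|\nabla u|^2$ on $B(x_0,2R)$. On the open set $\{\nabla u\ne0\}$ the equation is uniformly elliptic, so $u$ is smooth there. The linearized operator $\mathcal L\psi=\diver(|\nabla u|^{m-2}A(\nabla\psi))$, with $A=\mathrm{Id}+(m-2)\nu\otimes\nu$ and $\nu=\nabla u/|\nabla u|$, satisfies a Bochner formula. Using $\Ric_g\ge0$ and the refined Kato inequality, we get a lower bound on $\mathcal L w$. It contains a favourable term of order $e^{2u}w^{q-m+2}$, coming from $(\Delta_m u)^2/n$, and the bad terms involve $e^u$, $e^uw^{q/2}$ and $\nabla e^u=e^u\nabla u$. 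The key point is that every $e^u$-term can be absorbed: Young's inequality lets the favourable term $e^{2u}w^{q-m+2}$ dominate the terms with one factor $e^u$. The cost is lower-order powers of $w$ with no $e^u$ factor, because $q>m$ makes the exponent $q-m+2>2$. Iterating in $L^p$ with cutoffs and the Saloff-Coste Sobolev inequality should give
\[
\sup_{B(x_0,R)}|\nabla u|\le C(n,m,q)\,R^{-1/(q-m+1)}\cdot(\text{something independent of }u),
\]
that is, $|\nabla u|\le C R^{-\alpha}$ for some $\alpha>0$, independent of $\sup u$. This mirrors the known estimate for $-\Delta_m u=|\nabla u|^q$. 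Fixing $R=1$ gives $|\nabla u|\le C$ globally, so $u$ grows at most linearly. A second pass, now in the form $e^u|\nabla u|^{q}\lesssim$ local data, together with $\lambda e^u\ge\lambda$, bounds $u$ from above. Letting $R\to\infty$ in the gradient estimate then gives $\nabla u\equiv0$. We expect the main obstacle of the whole proof to be this absorption. The difficulty is keeping the iteration constants free of $e^u$ when the $\nabla e^u$ term appears. We will first try weighting the test functions by $e^{-\beta u}$ to cancel that term.

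\textbf{Case $m-1<q<m$.} Set $v=\Phi(u)$ with $\Phi'=\exp(-\lambda e^u/(m-1))$ to first order, chosen so that the equation becomes
\[
-\Delta_{m,g}v=\lambda\Phi'(u)^{m-1}e^u|\nabla u|^q-(m-1)\Phi'^{m-2}\Phi''|\nabla u|^m.
\]
Choose $\Phi$ so that the right side has the form $h(u)|\nabla v|^{q}\bigl(1-c|\nabla v|^{m-q}\bigr)$, or more simply so that $v$ is bounded and $-\Delta_{m,g}v\ge0$. Then form the auxiliary vector field $X=\eta^s|\nabla u|^{m-2}\nabla u\,F(u)$ with a cutoff $\eta$ and a decaying weight $F$. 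Its divergence gives an inequality of the form $\int\eta^s F e^u|\nabla u|^q\le C\int\eta^{s-1}|\nabla\eta|F|\nabla u|^{m-1}$. Because $q>m-1$, Hölder's inequality with exponents $q/(m-1)$ and $q/(q-m+1)$ turns the right side into $R^{-q/(q-m+1)}\mu(B_{2R})$ to a power. This is not yet sufficient above $q_*$, so we would also use the local gradient bound from the $p$-harmonic-type estimate in \cite{WZ11} on $v$. Its positivity and $m$-superharmonicity give a Harnack/decay bound, and we let $R\to\infty$. Finally, the extension to real-valued solutions bounded below follows by adding a constant to $u$ and adjusting $\lambda$.
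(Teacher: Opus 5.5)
Your reductions for $m\ge n$, for $q\le q_*$ (via the cited literature) and for $q=m$ agree with the paper. The two ranges that carry the actual content, however, are not proved.

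\textbf{The range $q>m$.} The Bochner--Moser scheme you describe does not produce a decaying, solution-independent bound $|\nabla u|\le CR^{-\alpha}$. The term $-\lambda e^u|\nabla u|^{q+2}$ coming from $\nabla e^u=e^u\nabla u$ has the wrong sign. Absorbing it by Young's inequality into the good term $\lambda^2e^{2u}|\nabla u|^{2q-m+2}$ leaves exactly the lower-order term you mention, namely $C(\lambda e^u)^{-2/(q-m)}|\nabla u|^m\le C\lambda^{-2/(q-m)}|\nabla u|^m$, after multiplication by $\eta^2|\nabla u|^r$. This term enters the iteration like a curvature term, not like $R^{-2}$, and the outcome is
\[ \sup_{B(o,R)}|\nabla u|\le C\bigl[\lambda^{-1/(q-m)}+\lambda^{-1/(q-m+1)}R^{-1/(q-m+1)}\bigr], \]
whose first term does not decay. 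An upper bound on $u$ would not remove it; only a lower bound on $e^u$ improves it. So letting $R\to\infty$ gives only $\sup_M|\nabla u|\le C\lambda^{-1/(q-m)}$, not $\nabla u\equiv0$. Your ``second pass'' also cannot bound $u$: a gradient bound yields only linear growth, and control of $e^u|\nabla u|^q$ says nothing where $\nabla u$ is small.

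Three ideas are missing.
(i) The function $u-k$ solves the equation with coefficient $\lambda e^k$ and is positive on $\{u>k\}$. Apply the estimate on balls of radius comparable to $(\sup_{\{u>k\}}|\nabla u|)^{-1}$ centred in $\{u>k+1\}$; these balls stay inside $\{u>k\}$. The resulting recursion in $k$ gives $\lambda e^u|\nabla u|^{q-m}\le D$ on $M$.
(ii) Then $e^{\alpha u}$ is $m$-subharmonic for $\alpha>D/(m-1)$, and $e^{u/(q-m)}$ is Lipschitz, so $e^{\alpha u}$ grows polynomially. Combine the mean-value inequality for $e^{\alpha u}$ with the weak Harnack inequality for the $m$-superharmonic $u$ and iterate over dyadic balls; this gives $\sup_Mu<\infty$.
(iii) For bounded $u$, the functions $u-\inf_Mu$ and $e^{\alpha(u-\inf_Mu)}-1$ are comparable. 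The two one-sided estimates therefore combine into a Harnack inequality for $u-\inf_Mu$ with $R$-independent constant, which forces $u\equiv\inf_Mu$.

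\textbf{The range $m-1<q<m$.} Here the proposal gives no argument for the new interval $q_*<q<m$. The cutoff/H\"older estimate is exactly the method that stops at $q_*$, as you note, and the proposed remedy fails. Since $q\ne m$, no choice of $\Phi$ makes $v=\Phi(u)$ $m$-harmonic: the terms $\lambda(\Phi')^{m-1}e^u|\nabla u|^q$ and $(m-1)(\Phi')^{m-2}\Phi''|\nabla u|^m$ cannot cancel identically. With your $\Phi'=\exp(-\lambda e^u/(m-1))$ one gets $-\Delta_{m,g}v=\lambda e^u(\Phi')^{m-1}\bigl(|\nabla u|^q+|\nabla u|^m\bigr)>0$, so $v$ is merely a bounded positive $m$-superharmonic function. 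The gradient estimate of \cite{WZ11} does not apply to it. On a non-$m$-parabolic manifold, no Harnack or decay argument can extract constancy from these properties alone: $\min\{1,|x|^{(m-n)/(m-1)}\}$ in $\mathbb R^n$ with $1<m<n$ has all of them.

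The missing idea is a pointwise use of the full equation. Set $s=|\nabla u|$ and $P=\lambda e^u/((m-1)s^{m-q})$. The paper takes $\mathcal F=(u+c)^{-\beta}s^{m-q}\phi\bigl(P,(u+c)^{-4}\bigr)$ with a specially designed cutoff $\phi$. Via the Bochner formula and the transverse Hessian bound \eqref{eq:transverse-hessian}, this $\mathcal F$ satisfies the uniformly elliptic inequality $\diver(\A\nabla\mathcal F)\ge c_0\mathcal F^{2/(m-q)}-C_0(1+\log_+\mathcal F)^J|\nabla\mathcal F|^2/\mathcal F$. The crux is the positivity of the quadratic expression $Q$ in Lemma~\ref{lem:new-Q-positive}. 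A convex reparametrization $\Psi(\mathcal F)$ then removes the gradient term. Finally, a Keller--Osserman-type vanishing lemma for $\diver(B\nabla w)\ge g(w)$ with $g(t)\gtrsim t(\log t)^4$, valid on manifolds with $\Ric_g\ge0$, forces $\mathcal F\equiv0$, that is, $\nabla u\equiv0$. Nothing in your sketch plays the role of this coercive inequality.
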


\begin{corollary}\label{cor:bounded-below}
For $m>1$, $q\in\mathbb R$ and $\lambda>0$, every real-valued
$C^1$ weak solution of~\eqref{eq:main} bounded below is constant.
\end{corollary}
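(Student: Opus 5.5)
We deduce Corollary~\ref{cor:bounded-below} from Theorem~\ref{thm:liouville} by a translation. Equation~\eqref{eq:main} is invariant under adding constants, up to a change of the coefficient. Let $u$ be a real-valued $C^1$ weak solution with $\inf_M u=:c>-\infty$. Set $v:=u-c+1$, so that $v\ge1>0$ on $M$. Then $\nabla v=\nabla u$ and $e^u=e^{c-1}e^v$, hence
\[
 -\Delta_{m,g}v=-\Delta_{m,g}u=\lambda e^u|\nabla u|^q=\lambda' e^v|\nabla v|^q,
 \qquad \lambda':=\lambda e^{c-1}>0 .
\]
We check that $v$ is a $C^1$ weak solution in the sense of~\eqref{eq:weak} with $\lambda'$ in place of $\lambda$. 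The regularity $v\in C^1_{\loc}\cap W^{1,m}_{\loc}$ is inherited from $u$, since constants lie in $W^{1,m}_{\loc}$. The integrability condition holds because $e^v|\nabla v|^q=e^{1-c}e^u|\nabla u|^q\in L^1_{\loc}$. For $q<0$ the convention that $|\nabla v|^q=+\infty$ where $\nabla v=0$ coincides with the one for $u$, so the two weak identities differ only by the positive factor $e^{1-c}$. Thus $v$ is a positive $C^1$ weak solution of~\eqref{eq:main} with parameters $(m,q,\lambda')$, $\lambda'>0$.

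Since Theorem~\ref{thm:liouville} holds for every $\lambda>0$, $v$ is constant, and hence so is $u$. When $q\le0$, the theorem (via its nonexistence part) rules out even constant positive solutions. A constant does not solve~\eqref{eq:main} when $q\le0$: for $q=0$ the right-hand side is $\lambda e^u>0$, and for $q<0$ it is $+\infty$. So for $q\le0$ there are no solutions bounded below at all, which is consistent with the statement.

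The argument has no real obstacle. The only point needing care is that the weak formulation, including the singular interpretation for $q<0$ and the $L^1_{\loc}$ requirement, is preserved under the translation. This holds because gradients are unchanged and the exponential factor changes by a positive constant. One should also note explicitly that the theorem applies uniformly in $\lambda$, since the shift changes $\lambda$ to $\lambda e^{c-1}$.
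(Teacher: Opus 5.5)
Your proposal is correct and is essentially the paper's own argument: it also shifts to $v=u-L+1\ge1$, absorbs $e^{L-1}$ into the coefficient $\lambda$, and applies Theorem~\ref{thm:liouville}. Your extra checks, that the weak formulation (including the $q<0$ convention) is preserved and that constants do not solve~\eqref{eq:main} when $q\le0$, are correct and simply spell out what the paper's one-line proof leaves implicit.
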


\begin{proof}
If $u\ge L$, setting $v=u-L+1\ge1$ gives
\[
 -\Delta_{m,g}v=(\lambda e^{L-1})e^v|\nabla v|^q.
\]
Theorem~\ref{thm:liouville} applies to $v$.
\end{proof}

\Needspace{12\baselineskip}
\begin{example}\label{ex:lower-bound}
When $m=n=2$ and $q=0$, the smooth function
\[
 u(x)=\log\frac8\lambda-2\log(1+|x|^2),\qquad x\in\mathbb R^2,
\]
satisfies
\[
 -\Delta u=\frac8{(1+|x|^2)^2}=\lambda e^u,
 \qquad \inf_{\mathbb R^2}u=-\infty.
\]
\end{example}

The following local result allows a Ricci lower bound instead of the
standing nonnegative-curvature assumption.

\begin{proposition}\label{thm:gradient}
Let $q>m>1$ and $K\ge0$. Suppose $\Ric_g\ge-(n-1)Kg$ on $B(o,2R)$
and $u>0$ is a $C^1$ weak solution of~\eqref{eq:main} there. Then
\begin{equation}\label{eq:gradient-main}
 \sup_{B(o,R)}|\nabla u|
 \le C(n,m,q)\left[
 \lambda^{-\frac1{q-m}}
 +\lambda^{-\frac1{q-m+1}}(K+R^{-2})^{\frac1{2(q-m+1)}}
 \right].
\end{equation}
The constant $C(n,m,q)$ is independent of the solution, $K$, $R$ and $\lambda$.
\end{proposition}

Section~\ref{sec:prelim} records the analytic preliminaries.
Sections~\ref{sec:local}--\ref{sec:global} treat $q>m$,
Section~\ref{sec:remaining} covers $m-1<q<m$, and
Section~\ref{sec:endpoints} handles the remaining exponents.

\section{Preliminaries}\label{sec:prelim}

The constant $C$ may change from line to line and, unless otherwise stated, depends only on $n,m,q$.
For a ball $B$ of finite positive volume, we write
\[
 \fint_B f\dmu=\frac1{\mu(B)}\int_B f\dmu.
\]
\begin{lemma}[Regularity on the noncritical set]\label{lem:regularity}
If $u\in C^1_{\loc}(\Omega)\cap W^{1,m}_{\loc}(\Omega)$ is a weak solution of~\eqref{eq:main}, then
\[
 u\in C^\infty(\{x\in\Omega:\nabla u(x)\ne0\}).
\]
\end{lemma}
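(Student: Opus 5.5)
The argument is local, so I will fix a point $x_0\in\Omega$ with $\nabla u(x_0)\ne0$ and work in a small coordinate ball $B=B(x_0,r)$ on which, by continuity of $\nabla u$, we have $0<c_0\le|\nabla u|\le C_0$. On $B$ the equation is nondegenerate: in local coordinates the weak formulation \eqref{eq:weak} reads $\partial_i\big(\sqrt{g}\,g^{ij}A(\nabla u)\partial_j u\big)=-\lambda\sqrt g\,e^u|\nabla u|^q$, with $A(\xi)=|\xi|_g^{m-2}$. Because $|\nabla u|$ stays in $[c_0,C_0]$, the flux $a(x,\xi)=\sqrt g\,g^{ij}|\xi|_g^{m-2}\xi_j$ is smooth and uniformly elliptic for $\xi$ near the range of $\nabla u$. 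The right-hand side $f=\lambda e^u|\nabla u|^q$ is continuous on $B$, and for $q<0$ it is still bounded there since $|\nabla u|\ge c_0$. The standing $C^1$ hypothesis gives $u\in C^{1}$, but not yet $C^{1,\alpha}$.

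\textbf{Step 1: $C^{1,\alpha}$.} I would first upgrade to $u\in C^{1,\alpha}(B)$. The standard route is the quasilinear theory for equations with bounded right-hand side (DiBenedetto / Tolksdorf / Lieberman $C^{1,\alpha}$ estimates for $m$-Laplace type operators with bounded source). Alternatively, freeze the coefficients $a^{ij}(x)=\partial_{\xi}a(x,\nabla u(x))$, which are continuous and uniformly elliptic; then difference quotients give $u\in W^{2,2}_{\loc}$, and $L^p$/Schauder theory for the derivative equation yields $C^{1,\alpha}$.

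\textbf{Step 2: Second derivatives.} Once $u\in C^{1,\alpha}$, the right-hand side $f$ lies in $C^{0,\alpha}$, since $e^u$ and $|\nabla u|^q$ are smooth functions of $(u,\nabla u)$ away from $\nabla u=0$. Now use the difference-quotient method. The resulting derivative $w=\partial_k u\in W^{1,2}$ satisfies a linear divergence-form equation
\[
\partial_i\big(a^{ij}(x)\partial_j w\big)=\partial_i F^i+G,
\]
where $a^{ij}=\partial_{\xi_j}a^i(x,\nabla u)\in C^{0,\alpha}$ are uniformly elliptic and $F,G$ are built from $x$-derivatives of the metric, $u$, $\nabla u$ and $f$. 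Concretely, one can write $\partial_k f$ in divergence form by testing with $\partial_k\varphi$ and integrating by parts, so $f\in C^{0,\alpha}$ suffices. Schauder theory in divergence form then gives $w\in C^{1,\alpha}$, i.e.\ $u\in C^{2,\alpha}$.

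\textbf{Step 3: Bootstrap.} With $u\in C^{2,\alpha}$, the equation holds classically in nondivergence form, $a^{ij}(x,\nabla u)\partial_{ij}u+b(x,\nabla u)=-f(u,\nabla u)$, with coefficients of class $C^{1,\alpha}$. Differentiating and applying Schauder estimates iteratively gives $u\in C^{k,\alpha}$ for all $k$, hence $u$ is $C^\infty$ near $x_0$. Since $\Omega\cap\{\nabla u\ne0\}$ is open and $x_0$ was arbitrary, the lemma follows.

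The main obstacle is Step 1, the passage from mere $C^1$ to Hölder-continuous gradient together with $W^{2,2}$. The difference-quotient argument needs uniform ellipticity of the difference of fluxes, $\langle a(x,\nabla u(x+h))-a(x,\nabla u(x)),\cdot\rangle$. This holds because both gradients lie in the compact set $\{c_0\le|\xi|\le C_0\}$, on which $a$ is smooth and strictly monotone, and the segment between two nearby gradients also stays away from $0$ once $r$ is small. I would handle the nonlinearity by writing
\[
a(\nabla u(x+h))-a(\nabla u(x))=\int_0^1\partial_\xi a\big(\nabla u(x)+t(\nabla u(x+h)-\nabla u(x))\big)\dd t\;\big(\nabla u(x+h)-\nabla u(x)\big).
\]
I would then test with $\eta^2\Delta_h u$ to get the $W^{2,2}$ bound, and invoke De Giorgi--Nash for the derivative equation to get $C^{1,\alpha}$. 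The $x$-dependence through the metric only adds lower-order terms.
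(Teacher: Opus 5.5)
Your proposal is correct and follows the same route as the paper, whose proof is only a short sketch: on relatively compact subsets of $\{|\nabla u|>0\}$ the source $\lambda e^u|\nabla u|^q$ is bounded (also for $q<0$) and the operator is uniformly elliptic, so the DiBenedetto--Tolksdorf $C^{1,\alpha}$ theory followed by Schauder bootstrapping gives smoothness. Your Steps 1--3 fill in this argument in more detail, and your alternative for Step 1 (difference quotients with the averaged coefficient matrix, then De Giorgi--Nash or $L^p$ theory for the differentiated equation) is also valid: the equation is nondegenerate near $x_0$, so the degenerate $C^{1,\alpha}$ theory is not actually needed.
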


On every relatively compact subdomain of $\{|\nabla u|>0\}$, the right-hand side is bounded and the equation is uniformly elliptic. The local $C^{1,\alpha}$ regularity theory~\cite{DiBenedetto83,Tolksdorf84}, followed by standard Schauder bootstrapping~\cite{GT}, gives the claimed smoothness there.

We use the local Sobolev inequality of Saloff-Coste~\cite[Theorem~3.1]{SC}.

\begin{lemma}\label{lem:sobolev}
Suppose that $\Ric_g\ge -(n-1)K g$ on $B(x,4\rho)$, and set
\[
 \mathcal V=\mu(B(x,2\rho)),\qquad
 \nu=\begin{cases}n,&n>2,\\4,&n=2,\end{cases}
 \qquad \chi=\frac{\nu}{\nu-2}.
\]
Then, for every $f\in C^{\infty}_0(B(x,2\rho))$,
\begin{equation}\label{eq:sobolev}
 \left(\frac1{\mathcal V}\int |f|^{2\chi}\dmu\right)^{1/\chi}
 \le Ce^{C\sqrt K\rho}\rho^2
 \left(
 \frac1{\mathcal V}\int|\nabla f|^2\dmu
 +\frac{\rho^{-2}}{\mathcal V}\int f^2\dmu
 \right),
\end{equation}
where $C$ depends only on $n$. By density, \eqref{eq:sobolev} also holds for $f\in W_0^{1,2}(B(x,2\rho))$.
\end{lemma}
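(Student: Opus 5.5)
The plan is to obtain \eqref{eq:sobolev} as a direct specialization of Saloff-Coste's local Sobolev inequality~\cite[Theorem~3.1]{SC} rather than to rebuild it. The only work is to check that its hypotheses hold on $B(x,4\rho)$ with the stated dependence of the constants, and then to add the density extension.

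First, I would verify the two geometric inputs behind Saloff-Coste's theorem, on balls $B(y,r)\subset B(x,4\rho)$ with $r\le 2\rho$.

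\emph{Volume doubling.} From $\Ric_g\ge-(n-1)Kg$, Bishop--Gromov comparison gives
\[
 \frac{\mu(B(y,s))}{\mu(B(y,r))}\le C(n)e^{C\sqrt K\,s}\Bigl(\frac sr\Bigr)^{n},\qquad 0<r\le s.
\]
Since $(s/r)^n\le(s/r)^\nu$ for $s\ge r$ and $\nu\ge n$, the same bound holds with exponent $\nu$. The choice $\nu=4$ when $n=2$ only serves to guarantee $\nu>2$, so that $\chi=\nu/(\nu-2)$ is finite.

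\emph{Poincar\'e inequality.} Buser's Neumann Poincar\'e inequality on such balls holds with constant $Ce^{C\sqrt K r}r^2$.

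Second, I would invoke the abstract principle that local doubling with exponent $\nu>2$ together with a scale-invariant $L^2$ Poincar\'e inequality yields the localized Sobolev inequality. This is the content of~\cite[Theorem~3.1]{SC}; it can be proved via a pseudo-Poincar\'e or heat-kernel (Nash-type) argument, or by a Whitney-covering argument. Normalizing by $\mathcal V=\mu(B(x,2\rho))$ gives exactly the form \eqref{eq:sobolev}, with all $K$-dependence collected in the factor $e^{C\sqrt K\rho}$ and $C=C(n)$. I expect this step to be the main obstacle if one insists on tracking constants from scratch: the exponential dependence on $\sqrt K\rho$ must not degrade into a power of $\mathcal V$. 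That is why I would cite~\cite{SC} rather than reprove it.

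Third, the density extension. Take $f\in W^{1,2}_0(B(x,2\rho))$ and choose $f_k\in C_0^\infty(B(x,2\rho))$ with $f_k\to f$ in $W^{1,2}$, passing to a subsequence that also converges a.e. The right-hand side of \eqref{eq:sobolev} for $f_k$ converges to that for $f$. On the left, Fatou's lemma gives
\[
 \int|f|^{2\chi}\dmu\le\liminf_{k\to\infty}\int|f_k|^{2\chi}\dmu,
\]
so the inequality passes to the limit.
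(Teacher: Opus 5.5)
Your proposal is correct and matches the paper, which likewise obtains the lemma by citing Saloff-Coste~\cite[Theorem~3.1]{SC} at radius $2\rho$, normalized by $\mathcal V$, with no further argument. Your Fatou-lemma passage to $W_0^{1,2}(B(x,2\rho))$ correctly supplies the density extension the paper only asserts, and the doubling and Poincar\'e inputs you list are already used inside Saloff-Coste's proof, so citing his theorem under the Ricci hypothesis is enough.
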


\begin{lemma}\label{lem:harnack}
Assume $\Ric_g\ge0$. For each $m>1$, there exist $\beta=\beta(n,m)>0$ and $C_H=C_H(n,m)>0$ such that every nonnegative continuous weakly $m$-superharmonic function $h$ satisfies
\begin{equation}\label{eq:weak-harnack}
 \left(\fint_{B(x,2R)}h^\beta\dmu\right)^{1/\beta}
 \le C_H\inf_{B(x,R)}h,
 \qquad x\in M,\quad R>0.
\end{equation}
Moreover, for every $t>0$, there exists $C_t=C_t(n,m,t)>0$ such that every nonnegative continuous weakly $m$-subharmonic function $v$ satisfies
\begin{equation}\label{eq:subsolution-bound}
 \sup_{B(x,R)}v
 \le C_t\left(\fint_{B(x,2R)}v^t\dmu\right)^{1/t}.
\end{equation}
These constants are independent of the center, radius and volume of the ball.
\end{lemma}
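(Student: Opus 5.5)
The statement immediately above is Lemma~\ref{lem:harnack}, the weak Harnack inequality and the local supremum bound for $m$-superharmonic and $m$-subharmonic functions under $\Ric_g\ge0$. I would not prove it by hand. Instead I would deduce it from the general De Giorgi--Nash--Moser theory on metric measure spaces that satisfy two structural hypotheses: volume doubling, and a weak $(1,m)$-Poincar\'e inequality.

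\textbf{Step 1: verify the structural hypotheses.} By Bishop--Gromov, $\Ric_g\ge0$ gives
\[
 \mu(B(x,2r))\le 2^n\mu(B(x,r)),
\]
with constant depending only on $n$. Buser's inequality, or equivalently the neighborhood Poincar\'e inequality of Saloff-Coste, gives the $(1,1)$-Poincar\'e inequality
\[
 \fint_{B}|f-f_B|\dmu\le C(n)\,r\fint_{B}|\nabla f|\dmu
\]
on every ball $B$ of radius $r$. By H\"older's inequality this implies the $(1,m)$-Poincar\'e inequality for every $m>1$. Together, doubling and Poincar\'e imply a scale-invariant local Sobolev inequality with some exponent $\chi>1$. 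This is the $L^m$ analogue of Lemma~\ref{lem:sobolev} with $K=0$, and its constants are independent of the center, the radius and $\mu(B)$.

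\textbf{Step 2: run Moser iteration for the two inequalities.}

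\emph{Subsolution bound.} I would test the weak inequality with $\eta^m v^{p}$ (cut off at high levels if needed) and combine this with the Sobolev inequality. Iterating over shrinking radii gives $\sup_{B_R}v\le C(\fint_{B_{2R}}v^{m})^{1/m}$. A standard covering-and-interpolation argument then lowers the exponent to any $t>0$, with constant $C_t$. This proves \eqref{eq:subsolution-bound}.

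\emph{Weak Harnack.} I would apply iteration with negative powers $p<0$ to $h+\varepsilon$, using test functions $\eta^m(h+\varepsilon)^{p}$. This bounds $\inf_{B_R}(h+\varepsilon)$ from below by $(\fint_{B_{3R/2}}(h+\varepsilon)^{-s})^{-1/s}$. Next, the test function $\eta^m(h+\varepsilon)^{1-m}$ shows that $\log(h+\varepsilon)$ has bounded mean oscillation, i.e.\ lies in BMO. The John--Nirenberg lemma, valid on doubling spaces, then yields
\[
 \fint (h+\varepsilon)^{\beta}\,\fint(h+\varepsilon)^{-\beta}\le C,
\]
which crosses from negative to positive exponents. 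Positive-exponent iteration up to $\beta$ finishes the argument, and letting $\varepsilon\to0$ gives \eqref{eq:weak-harnack}.

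\textbf{Main obstacle.} The main obstacle is the crossover step: the BMO estimate for $\log h$ and the use of John--Nirenberg on a manifold rather than on $\mathbb R^n$. Rather than redo it, I would cite the known results directly. These are the elliptic Harnack theory for $p$-harmonic-type equations on doubling spaces with a Poincar\'e inequality, due to Kinnunen--Shanmugalingam and Bj\"orn--Bj\"orn, and on Riemannian manifolds Rigoli--Salvatori--Vignati or Saloff-Coste's framework. For these results one checks that continuous weak $m$-super- and subsolutions are admissible, and that all constants depend only on the doubling and Poincar\'e constants, hence only on $n$ and $m$ (and $t$).
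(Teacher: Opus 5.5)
Your proposal follows essentially the same route as the paper: Bishop--Gromov doubling, plus a Buser-based Poincar\'e inequality upgraded by H\"older to a weak $(1,m)$-Poincar\'e inequality, followed by an appeal to Moser iteration on doubling metric measure spaces (the paper cites Bj\"orn--Marola, Theorems~9.2 and~8.2), with all constants depending only on the doubling and Poincar\'e constants. The admissibility step you leave as ``one checks'' is done in the paper by a one-line convexity argument for $\xi\mapsto|\xi|^m$; it shows that weakly $m$-superharmonic (resp.\ subharmonic) functions are $m$-superminimizers (resp.\ subminimizers), which is the form in which those metric-space theorems are stated.
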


\begin{proof}
Bishop--Gromov comparison gives
\begin{equation}\label{eq:doubling}
 \mu(B(x,2R))\le2^n\mu(B(x,R)).
\end{equation}
Buser's relative isoperimetric estimate for geodesic balls~\cite[Lemma~5.1]{Buser}, together with the coarea formula and H\"older's inequality, gives
\begin{equation}\label{eq:poincare}
 \fint_{B(x,R)}|f-f_{B(x,R)}|^m\dmu
 \le C(n,m)R^m\fint_{B(x,R)}|\nabla f|^m\dmu,
\end{equation}
where $f_{B(x,R)}=\fint_{B(x,R)}f\dmu$.

By H\"older's inequality,~\eqref{eq:poincare} implies a weak $(1,m)$-Poincar\'e inequality
\begin{equation}\label{eq:weak-poincare}
\fint_{B(x,R)}|f-f_{B(x,R)}|\dmu
 \le C(n,m)R\left(\fint_{B(x,R)}|\nabla f|^m\dmu\right)^{1/m}.
\end{equation}
Thus $M$ supports a weak $(1,m)$-Poincar\'e inequality with unit dilation.

Let $h$ be nonnegative and weakly $m$-superharmonic, and fix an open set $\Omega'\Subset M$.
For every nonnegative $\psi\in C_c^\infty(\Omega')$, convexity gives
\[
\begin{aligned}
 \int_{\Omega'}|\nabla(h+\psi)|^m\dmu
 -\int_{\Omega'}|\nabla h|^m\dmu
 &\ge m\int_{\Omega'}|\nabla h|^{m-2}
       \langle\nabla h,\nabla\psi\rangle\dmu\\
 &\ge0.
\end{aligned}
\]
By density, this implies
\begin{equation}\label{eq:superminimizer}
 \int_{\Omega'}|\nabla h|^m\dmu
 \le\int_{\Omega'}|\nabla(h+\psi)|^m\dmu
\end{equation}
for every nonnegative $\psi\in W_0^{1,m}(\Omega')$.
Thus $h$ is an $m$-superminimizer.
Similarly, nonpositive variations show that every weakly $m$-subharmonic function is an $m$-subminimizer.

The doubling property~\eqref{eq:doubling} and the weak Poincar\'e inequality~\eqref{eq:weak-poincare} therefore allow us to apply
Bj\"orn--Marola~\cite[Theorems~9.2 and~8.2]{BM}, which give~\eqref{eq:weak-harnack} and~\eqref{eq:subsolution-bound}, respectively.
All enlarged balls required by these theorems lie in the domain, since the functions are defined on the whole manifold.
The constants depend only on the doubling and Poincar\'e constants and on the stated exponents, and are therefore independent of the center and radius.
Finally, continuity allows the essential infima and suprema in these theorems to be replaced by the ordinary infima and suprema.
\end{proof}

\begin{lemma}\label{lem:hessian}
Let $u$ be a $C^1$ weak solution of~\eqref{eq:main}, with $m>1$ and
$q\in\mathbb R$. On $\{|\nabla u|>0\}$, we have
\begin{equation}\label{eq:hessian}
 |\nabla^2u|^2+(m-2)\bigl|\nabla|\nabla u|\bigr|^2
 \ge\frac{\lambda^2e^{2u}}{n+m-2}|\nabla u|^{2q-2m+4}.
\end{equation}
Moreover, in an orthonormal frame with
$e_1=\nabla u/|\nabla u|$, writing $u_{ij}=\nabla^2u(e_i,e_j)$, we have
\begin{equation}\label{eq:transverse-hessian}
 \sum_{i,j=2}^n u_{ij}^2
 \ge\frac1{n-1}
 \left((m-1)u_{11}+\lambda e^u|\nabla u|^{q-m+2}\right)^2.
\end{equation}
\end{lemma}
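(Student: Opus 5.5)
On $\{|\nabla u|>0\}$ the solution is smooth by Lemma~\ref{lem:regularity}, so I will work pointwise in an orthonormal frame with $e_1=\nabla u/|\nabla u|$. The first step is to rewrite the equation in nondivergence form. Since $\Delta_{m,g}u=|\nabla u|^{m-2}\bigl(\Delta u+(m-2)u_{11}\bigr)$, equation~\eqref{eq:main} becomes
\[
 (m-1)u_{11}+\sum_{i=2}^n u_{ii}=-\lambda e^u|\nabla u|^{q-m+2}.
\]
Set $A:=(m-1)u_{11}+\lambda e^u|\nabla u|^{q-m+2}$, so that $\sum_{i\ge2}u_{ii}=-A$.

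To prove \eqref{eq:transverse-hessian}, I apply Cauchy--Schwarz to the diagonal of the transverse block:
\[
 \sum_{i,j\ge2}u_{ij}^2\ge\sum_{i\ge2}u_{ii}^2\ge\frac1{n-1}\Bigl(\sum_{i\ge2}u_{ii}\Bigr)^2=\frac{A^2}{n-1}.
\]

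For \eqref{eq:hessian}, I first record that $\nabla|\nabla u|=\sum_j u_{1j}e_j$. This gives $\bigl|\nabla|\nabla u|\bigr|^2=\sum_j u_{1j}^2$, and therefore
\[
 |\nabla^2u|^2+(m-2)\bigl|\nabla|\nabla u|\bigr|^2
 =(m-1)u_{11}^2+2(m-1)\sum_{j\ge2}u_{1j}^2+\sum_{i,j\ge2}u_{ij}^2.
\]
I drop the mixed terms $u_{1j}$ for $j\ge2$, which are nonnegative since $m>1$. Writing $a:=u_{11}$ and $F:=\lambda e^u|\nabla u|^{q-m+2}$, the bound from the first step reduces the claim to
\[
 (m-1)a^2+\frac{\bigl((m-1)a+F\bigr)^2}{n-1}\ge\frac{F^2}{n+m-2}.
\]
This is a one-variable minimization. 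I put $b=(m-1)a$, so the left side equals $\frac{b^2}{m-1}+\frac{(b+F)^2}{n-1}$. By the Cauchy--Schwarz inequality in the form $\frac{x^2}{\alpha}+\frac{y^2}{\beta}\ge\frac{(x+y)^2}{\alpha+\beta}$, applied with $x=-b$ and $y=b+F$, the left side is at least $\frac{F^2}{(m-1)+(n-1)}=\frac{F^2}{n+m-2}$. Since $F^2=\lambda^2e^{2u}|\nabla u|^{2q-2m+4}$, this is exactly \eqref{eq:hessian}.

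The only point needing care is the identity for $\Delta_{m,g}u$ and for $\bigl|\nabla|\nabla u|\bigr|^2$ in the adapted frame, together with the sign of the $(m-2)$-term when $1<m<2$. In that range the combination $(m-1)u_{11}^2+2(m-1)\sum_{j\ge2}u_{1j}^2$ still has positive coefficients, so dropping the mixed terms remains valid. Everything else is elementary algebra.
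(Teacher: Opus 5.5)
Your proof is correct and is essentially the paper's argument: the paper applies one weighted Cauchy--Schwarz inequality to $(\sqrt{m-1}\,u_{11},u_{22},\dots,u_{nn})$, which is what your two-stage split (transverse Cauchy--Schwarz, then the two-term Engel form) amounts to. One small slip: in your identity the coefficient of $\sum_{j\ge2}u_{1j}^2$ is $2+(m-2)=m$, not $2(m-1)$; it is still positive for $m>1$, so dropping those terms is unaffected.
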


\begin{proof}
By Lemma~\ref{lem:regularity}, $u$ is smooth on $\{|\nabla u|>0\}$.
The equation gives
\[
 |\nabla u|^{m-2}\left((m-1)u_{11}+\sum_{i=2}^n u_{ii}\right)
 =-\lambda e^u|\nabla u|^q.
\]
Dividing by $|\nabla u|^{m-2}$, we obtain
\begin{equation}\label{eq:trace}
 (m-1)u_{11}+\sum_{i=2}^n u_{ii}
 =-\lambda e^u|\nabla u|^{q-m+2}.
\end{equation}
Since $u_1=|\nabla u|$ and $u_j=0$ for $2\le j\le n$, we have
\begin{equation}\label{eq:gradient-frame}
\nabla_i|\nabla u|
=\frac{\nabla^2u(e_i,\nabla u)}{|\nabla u|}
=u_{i1}.
\end{equation}
Consequently,
\[
\bigl|\nabla|\nabla u|\bigr|^2
=u_{11}^2+\sum_{i=2}^n u_{1i}^2.
\]
On the other hand, the symmetry $u_{ij}=u_{ji}$ gives
\[
|\nabla^2u|^2
=\sum_{i,j=1}^n u_{ij}^2
=u_{11}^2+2\sum_{i=2}^n u_{1i}^2
+\sum_{i,j=2}^n u_{ij}^2.
\]
Combining these identities, we obtain
\[
\begin{aligned}
|\nabla^2u|^2+(m-2)\bigl|\nabla|\nabla u|\bigr|^2
=(m-1)u_{11}^2+m\sum_{i=2}^n u_{1i}^2
+\sum_{i,j=2}^n u_{ij}^2.
\end{aligned}
\]
The weighted Cauchy--Schwarz inequality yields
\[
\begin{aligned}
\left((m-1)u_{11}+\sum_{i=2}^n u_{ii}\right)^2
&=\left(
\sqrt{m-1}\,\bigl(\sqrt{m-1}\,u_{11}\bigr)
+\sum_{i=2}^n 1\cdot u_{ii}
\right)^2\\
&\le
\left((m-1)+\sum_{i=2}^n1\right)
\left((m-1)u_{11}^2+\sum_{i=2}^n u_{ii}^2\right)\\
&=(n+m-2)
\left((m-1)u_{11}^2+\sum_{i=2}^n u_{ii}^2\right)\\
 &\le(n+m-2)\left(|\nabla^2u|^2+(m-2)\bigl|\nabla|\nabla u|\bigr|^2\right).
\end{aligned}
\]
Together with~\eqref{eq:trace}, this proves~\eqref{eq:hessian} for every $m>1$.

Finally, the Cauchy--Schwarz inequality in the directions orthogonal to
$\nabla u$, together with~\eqref{eq:trace}, gives
\[
\begin{aligned}
\sum_{i,j=2}^n u_{ij}^2
&\ge \sum_{i=2}^n u_{ii}^2
 \ge \frac1{n-1}\left(\sum_{i=2}^n u_{ii}\right)^2\\
&=\frac1{n-1}
 \left((m-1)u_{11}+\lambda e^u|\nabla u|^{q-m+2}\right)^2.
\end{aligned}
\]
This proves~\eqref{eq:transverse-hessian}.
\end{proof}

\section{Local gradient estimates in the range \texorpdfstring{$q>m$}{q > m}}\label{sec:local}

In this section we assume $q>m$ and prove Proposition~\ref{thm:gradient}.
Fix a positive solution $u$ as in that proposition.
Until Sobolev regularity across the critical set has been established, all integrals involving $\nabla|\nabla u|$ or $\nabla^2u$ are taken over $\{|\nabla u|>0\}$.

\begin{lemma}\label{lem:bochner}
On $\{|\nabla u|>0\}$, we have
\begin{equation}\label{eq:bochner}
 \begin{aligned}
 &\frac12\diver\left[|\nabla u|^{m-2}
   \left(I+(m-2)\frac{\nabla u\otimes\nabla u}{|\nabla u|^2}\right)\nabla(|\nabla u|^2)\right]\\
 &\quad=|\nabla u|^{m-2}\left(|\nabla^2u|^2+(m-2)\bigl|\nabla|\nabla u|\bigr|^2
                  +\Ric(\nabla u,\nabla u)\right)\\
 &\qquad\quad-\lambda e^u|\nabla u|^{q+2}
                  -q\lambda e^u|\nabla u|^{q-1}\langle\nabla u,\nabla|\nabla u|\rangle.
 \end{aligned}
\end{equation}
\end{lemma}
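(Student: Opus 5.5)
The identity is pointwise, and $u$ is smooth on $\{|\nabla u|>0\}$ by Lemma~\ref{lem:regularity}, so we may compute classically there. Write $w=|\nabla u|^2$ and $A=I+(m-2)\frac{\nabla u\otimes\nabla u}{|\nabla u|^2}$. The plan is to recognize the left-hand side as the linearization of the $m$-Laplacian applied to $w$. Then the standard Bochner formula and the equation do the rest.

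\textbf{Step 1: rewrite the flux.} Differentiating $|\nabla u|^{m-2}\nabla u$ in the direction $\nabla_k$ gives
\[
\nabla_k\bigl(|\nabla u|^{m-2}\nabla u\bigr)=|\nabla u|^{m-2}A\,\nabla(\nabla_k u).
\]
Consequently
\[
\tfrac12|\nabla u|^{m-2}A\nabla w=\sum_k u_k\,\nabla_k\bigl(|\nabla u|^{m-2}\nabla u\bigr).
\]
Set $V=|\nabla u|^{m-2}\nabla u$. The left-hand side of \eqref{eq:bochner} is then
\[
\diver\Bigl(\sum_k u_k\nabla_kV\Bigr)=\sum_{k,i}u_{ki}\nabla_kV_i+\sum_k u_k\nabla_i\nabla_kV_i .
\]

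\textbf{Step 2: evaluate the two pieces.}

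For the first piece, $\sum u_{ki}\nabla_kV_i=|\nabla u|^{m-2}\operatorname{tr}(\nabla^2u\,A\,\nabla^2u)$. Since
\[
\nabla^2u(\nabla u)=|\nabla u|\,\nabla|\nabla u|,
\]
this equals
\[
|\nabla u|^{m-2}\bigl(|\nabla^2u|^2+(m-2)|\nabla|\nabla u||^2\bigr).
\]

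For the second piece, commute covariant derivatives: $\nabla_i\nabla_kV_i=\nabla_k\nabla_iV_i+R_{ki}V_i$. This gives
\[
\sum_k u_k\nabla_i\nabla_kV_i=\langle\nabla u,\nabla\Delta_{m,g}u\rangle+|\nabla u|^{m-2}\Ric(\nabla u,\nabla u).
\]

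\textbf{Step 3: insert the equation.} Use $\Delta_{m,g}u=-\lambda e^u|\nabla u|^q$, which holds classically on the noncritical set by smoothness there. Then
\[
\langle\nabla u,\nabla\Delta_{m,g}u\rangle=-\lambda e^u|\nabla u|^{q+2}-q\lambda e^u|\nabla u|^{q-1}\langle\nabla u,\nabla|\nabla u|\rangle.
\]
This matches the last two terms of \eqref{eq:bochner}.

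The main care point is the commutation in Step 2. It requires checking the Ricci identity for the vector field $V$, namely $\diver(\nabla_kV)-\nabla_k\diver V=\Ric(e_k,V)$ after contraction, with the sign convention $\Ric\ge0$ on spheres. The Step 1 identity for $\nabla_kV$ is a routine product rule, using $\nabla_k|\nabla u|^{m-2}=(m-2)|\nabla u|^{m-4}\langle\nabla u,\nabla\nabla_ku\rangle$ together with the symmetry of the Hessian.
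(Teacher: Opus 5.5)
Your proposal is correct and follows essentially the same route as the paper. Both write the flux as $u^k\nabla_k(|\nabla u|^{m-2}\nabla u)$ and split the divergence by the product rule. The first piece gives $|\nabla u|^{m-2}\bigl(|\nabla^2u|^2+(m-2)|\nabla|\nabla u||^2\bigr)$; commuting covariant derivatives in the second yields $\langle\nabla u,\nabla\Delta_{m,g}u\rangle+|\nabla u|^{m-2}\Ric(\nabla u,\nabla u)$, and the equation is then substituted.
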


\begin{proof}
By Lemma~\ref{lem:regularity}, $u$ is smooth on $\{|\nabla u|>0\}$. For any $p\in\{|\nabla u|>0\}$, we choose an orthonormal frame, so it is obvious that
\[
 \nabla_k(|\nabla u|^{m-2}u^i)
 =|\nabla u|^{m-2}\left(\delta^{ij}+(m-2)\frac{u^iu^j}{|\nabla u|^2}\right)u_{jk}.
\]
Differentiating once more and commuting the covariant derivatives of
$|\nabla u|^{m-2}\nabla u$ gives
\[
\begin{aligned}
&\frac12\diver\left[
|\nabla u|^{m-2}
\left(I+(m-2)\frac{\nabla u\otimes\nabla u}{|\nabla u|^2}\right)
\nabla(|\nabla u|^2)
\right]\\
&=\nabla_i\left[
u^j\nabla_j\bigl(|\nabla u|^{m-2}u^i\bigr)
\right]\\
&=u_{ij}\nabla_j\bigl(|\nabla u|^{m-2}u^i\bigr)
+u^j\nabla_i\nabla_j\bigl(|\nabla u|^{m-2}u^i\bigr)\\
&=|\nabla u|^{m-2}|\nabla^2u|^2
+(m-2)|\nabla u|^{m-4}u_i u_{ij}u_k u_{kj}
+u^j\nabla_j\left[
\nabla_i\bigl(|\nabla u|^{m-2}u^i\bigr)
\right]
+|\nabla u|^{m-2}\Ric_{jk}u^ju^k\\
&=|\nabla u|^{m-2}
\left(
|\nabla^2u|^2
+(m-2)\bigl|\nabla|\nabla u|\bigr|^2
+\Ric(\nabla u,\nabla u)
\right)+\langle\nabla u,\nabla\Delta_{m,g}u\rangle.
\end{aligned}
\]
Substituting $\Delta_{m,g}u=-\lambda e^u|\nabla u|^q$ proves~\eqref{eq:bochner}.
\end{proof}

\begin{lemma}\label{lem:energy}
For any $r\ge\max\left\{1,2-m,\frac{2(n+m-2)q^2}{m-1}\right\}$
and any nonnegative $\eta\in C_0^\infty(B(o,2R))$, we have
\begin{equation}\label{eq:energy}
 \begin{aligned}
 &\frac r2\int\eta^2|\nabla u|^{m+r-2}
   \left(\bigl|\nabla|\nabla u|\bigr|^2+(m-2)\frac{\langle\nabla u,\nabla|\nabla u|\rangle^2}{|\nabla u|^2}\right)\dmu\\
 &\quad+\frac{\lambda^2}{4(n+m-2)}
       \int\eta^2e^{2u}|\nabla u|^{2q-m+r+2}\dmu\\
 &\le C\left[
   \left(K+\lambda^{-2/(q-m)}\right)\int\eta^2|\nabla u|^{m+r}\dmu
   +\int |\nabla u|^{m+r}|\nabla\eta|^2\dmu\right].
 \end{aligned}
\end{equation}
Moreover, $|\nabla u|^{(m+r)/2}\in W^{1,2}_{\loc}(B(o,2R))$ and
\begin{equation}\label{eq:moser-energy}
 \begin{aligned}
 \int\bigl|\nabla(\eta|\nabla u|^{(m+r)/2})\bigr|^2\dmu
 \le C(m+r)^2\bigg[&
   \left(K+\lambda^{-2/(q-m)}\right)\int\eta^2|\nabla u|^{m+r}\dmu\\
   &+\int |\nabla u|^{m+r}|\nabla\eta|^2\dmu\bigg].
 \end{aligned}
\end{equation}
The constant $C$ is uniform for all such $r$.
\end{lemma}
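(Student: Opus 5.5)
We test the Bochner identity of Lemma~\ref{lem:bochner} against $\eta^2|\nabla u|^{r}$ and integrate by parts. The computation is first done on the noncritical set, with a regularization handling the critical set at the end. Write $w=|\nabla u|$ and $A=I+(m-2)\nabla u\otimes\nabla u/w^2$, so that $\langle A\xi,\xi\rangle\ge\min\{1,m-1\}|\xi|^2$. To avoid the critical set, we use $\psi=\eta^2(w^2+\varepsilon)^{r/2}$, or $\eta^2 w_\varepsilon^{r}$ with $w_\varepsilon=(w-\varepsilon)_+$ (supported in $\{w>\varepsilon\}$, where $u$ is smooth by Lemma~\ref{lem:regularity}), and let $\varepsilon\to0$ at the end by monotone or dominated convergence. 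This uses $u\in C^1$, so that $w$ is bounded on $\supp\eta$, together with $r\ge\max\{1,2-m\}$, which keeps all weights $w^{m+r-2}$ locally integrable.

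\textbf{Step 1: integration by parts.} Multiply \eqref{eq:bochner} by $-\psi$ and integrate. The left side gives
\[
\int w^{m-2}\langle A\nabla(w^2)/2,\nabla\psi\rangle,
\]
which produces the good term $r\int\eta^2 w^{m+r-2}\langle A\nabla w,\nabla w\rangle$, plus a cross term $2\int\eta w^{m+r-1}\langle A\nabla w,\nabla\eta\rangle$. On the right side we use $\Ric\ge-(n-1)K$ and Lemma~\ref{lem:hessian}, i.e.\ \eqref{eq:hessian}:
\[
w^{m-2}\bigl(|\nabla^2u|^2+(m-2)|\nabla w|^2\bigr)\ge \frac{\lambda^2e^{2u}}{n+m-2}w^{2q-m+2}.
\]
This yields the good term $\frac{\lambda^2}{n+m-2}\int\eta^2e^{2u}w^{2q-m+r+2}$.

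\textbf{Step 2: the terms to absorb.} First, $-\lambda\int\eta^2e^uw^{q+2+r}$ has the wrong sign. By Young's inequality,
\[
\lambda e^uw^{q+2+r}\le\delta\lambda^2e^{2u}w^{2q-m+r+2}+C_\delta w^{m+r+2-\dots}
\]
In fact the cleaner split is $\lambda e^u w^{q+r+2}=(\lambda e^u w^{q-m/2+r/2+1})w^{(m+r)/2+1}$, giving $\delta\lambda^2e^{2u}w^{2q-m+r+2}+C_\delta w^{m+r+2}$. The leftover $w^{m+r+2}$ must then be interpolated between the absorbing term $\lambda^2e^{2u}w^{2q-m+r+2}$, using $e^{2u}\ge1$ since $u>0$, and $w^{m+r}$. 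Since $2q-m+r+2>m+r+2>m+r$ exactly because $q>m$, Young's inequality gives
\[
w^{m+r+2}\le\delta\lambda^2w^{2q-m+r+2}+C\lambda^{-2/(q-m)}w^{m+r}.
\]
This is the source of the factor $\lambda^{-2/(q-m)}$. Second, the term $-q\lambda e^u w^{q-1}\langle\nabla u,\nabla w\rangle\psi$ is handled by Cauchy--Schwarz:
\[
\le \tfrac{r}{4}\,c_m\,\eta^2w^{m+r-2}|\nabla w|^2+\frac{q^2}{r c_m}\lambda^2e^{2u}\eta^2w^{2q-m+r+2}.
\]
The hypothesis $r\ge 2(n+m-2)q^2/(m-1)$ is precisely what makes $q^2/(rc_m)\le\frac{1}{2(n+m-2)}$, so this piece is absorbed into half of the Hessian term; here $c_m$ is taken comparable to $m-1$, the ellipticity constant. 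Third, the cross term with $\nabla\eta$ is split by Cauchy--Schwarz into a small multiple of the gradient term and $C\,r^{-1}\cdot r\int w^{m+r}|\nabla\eta|^2$; the $r$-factors must be tracked so that $C$ is uniform in $r$. Collecting everything, with the quarter coefficient remaining on the $\lambda^2$ term, gives \eqref{eq:energy}. The $K$ term enters directly as $(n-1)K\int\eta^2w^{m+r}$.

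\textbf{Step 3: the Moser form.} We have
\[
|\nabla(\eta w^{(m+r)/2})|^2\le2w^{m+r}|\nabla\eta|^2+\frac{(m+r)^2}{2}\eta^2w^{m+r-2}|\nabla w|^2.
\]
The last term is bounded via \eqref{eq:energy}, using $\langle A\nabla w,\nabla w\rangle\ge\min\{1,m-1\}|\nabla w|^2$ and dividing by $r/2$; this costs a factor $(m+r)^2/r\le C(m+r)^2$. The $W^{1,2}_{\loc}$ membership follows from the uniform $\varepsilon$-bounds and weak compactness, since $\nabla w$ vanishes a.e.\ on $\{w=0\}$.

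\textbf{Main obstacle.} The delicate point is justifying the integration by parts across the critical set, since $u$ is only $C^1$ there. The boundary terms on $\{w=\varepsilon\}$ must have a favorable sign or vanish; the $(w-\varepsilon)_+$ truncation makes the test function vanish there. We must also check that all constants stay uniform in $r$ and $\varepsilon$.
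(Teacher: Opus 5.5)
Your scheme is the paper's. You test \eqref{eq:bochner} with $\eta^2$ times a power of $w=|\nabla u|$ truncated to vanish on $\{w=\varepsilon\}$, use \eqref{eq:hessian} and $\Ric\ge-(n-1)K$, and absorb three terms by Young's inequality. Then you let $\varepsilon\to0$ and get \eqref{eq:moser-energy} and the $W^{1,2}_{\loc}$ membership from the chain rule and weak compactness. Your truncation $(w-\varepsilon)_+^r$ works as well as the paper's $w^r-\delta^r$ on $\{w>\delta\}$. The alternative $(w^2+\varepsilon)^{r/2}$ does not, since it does not vanish near the critical set. Your two-step Young inequality for the source term is an acceptable substitute for the paper's single one.

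There is, however, a real gap in the drift term. You bound it by $\frac r4c_m\eta^2w^{m+r-2}|\nabla w|^2+\frac{q^2}{rc_m}\lambda^2e^{2u}\eta^2w^{2q-m+r+2}$, and you need $c_m=m-1$ to match the hypothesis on $r$. But absorbing the first piece into $\frac r4\int\eta^2w^{m+r-2}\langle A\nabla w,\nabla w\rangle$ requires $c_m|\nabla w|^2\le\langle A\nabla w,\nabla w\rangle$, i.e.\ $c_m\le\min\{1,m-1\}$. That is exactly the bound you recorded at the start. For $m>2$ it equals $1$, not $m-1$: if $\nabla w\perp\nabla u$, then $\langle A\nabla w,\nabla w\rangle=|\nabla w|^2$.

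With $c_m=1$ your absorption needs $r\ge2(n+m-2)q^2$, which does not follow from $r\ge\frac{2(n+m-2)q^2}{m-1}$ when $m>2$. For $m\ge4$, no rebalancing of the Young weights closes the estimate at the stated threshold while keeping the coefficients $\frac r2$ and $\frac{\lambda^2}{4(n+m-2)}$.

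The missing observation is that the drift term involves only the radial derivative of $w$. With $e_1=\nabla u/w$ one has $|\langle\nabla u,\nabla w\rangle|=w|\langle e_1,\nabla w\rangle|$ and
\[
\langle A\nabla w,\nabla w\rangle=|\nabla w|^2+(m-2)\langle e_1,\nabla w\rangle^2\ge(m-1)\langle e_1,\nabla w\rangle^2
\qquad\text{for every }m>1.
\]
Apply Young's inequality to $q\lambda e^uw^{q+r}|\langle e_1,\nabla w\rangle|$ rather than to $q\lambda e^uw^{q+r}|\nabla w|$. This gives $\frac{r(m-1)}4w^{m+r-2}\langle e_1,\nabla w\rangle^2\le\frac r4w^{m+r-2}\langle A\nabla w,\nabla w\rangle$ plus $\frac{q^2}{r(m-1)}\lambda^2e^{2u}w^{2q-m+r+2}$. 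The hypothesis on $r$ then does exactly what you claim; this is how the paper handles the term. Without this observation, for $m>2$ your argument proves the lemma only under a larger threshold on $r$. That would suffice for the later Moser iteration, but it is not the stated result.
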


\begin{proof}
Fix $r$ as in the statement and $\delta>0$.
On $\{|\nabla u|>\delta\}$, multiplying
\eqref{eq:bochner} by
$\eta^2(|\nabla u|^r-\delta^r)$
and using the product rule for divergence, we have
\[
\begin{aligned}
&\eta^2(|\nabla u|^r-\delta^r)
\operatorname{div}\left[
|\nabla u|^{m-1}
\left(
I+(m-2)\frac{\nabla u\otimes\nabla u}{|\nabla u|^2}
\right)\nabla|\nabla u|
\right]\\
&=\operatorname{div}\left[
\eta^2(|\nabla u|^r-\delta^r)|\nabla u|^{m-1}
\left(
I+(m-2)\frac{\nabla u\otimes\nabla u}{|\nabla u|^2}
\right)\nabla|\nabla u|
\right]\\
&\quad-2\eta(|\nabla u|^r-\delta^r)|\nabla u|^{m-1}
\left\langle
\left(
I+(m-2)\frac{\nabla u\otimes\nabla u}{|\nabla u|^2}
\right)\nabla|\nabla u|,\nabla\eta
\right\rangle\\
&\quad-r\eta^2|\nabla u|^{m+r-2}
\left(
|\nabla|\nabla u||^2
+(m-2)
\frac{\langle\nabla u,\nabla|\nabla u|\rangle^2}
{|\nabla u|^2}
\right).
\end{aligned}
\]
Substituting \eqref{eq:bochner},
we obtain
\[
\begin{aligned}
&r\eta^2|\nabla u|^{m+r-2}
\left(
|\nabla|\nabla u||^2
+(m-2)
\frac{\langle\nabla u,\nabla|\nabla u|\rangle^2}
{|\nabla u|^2}
\right)\\
&\quad+\eta^2|\nabla u|^{m-2}
(|\nabla u|^r-\delta^r)
\left(
|\nabla^2u|^2
+(m-2)|\nabla|\nabla u||^2
+\operatorname{Ric}(\nabla u,\nabla u)
\right)\\
&=\operatorname{div}\left[
\eta^2(|\nabla u|^r-\delta^r)|\nabla u|^{m-1}
\left(
I+(m-2)\frac{\nabla u\otimes\nabla u}{|\nabla u|^2}
\right)\nabla|\nabla u|
\right]\\
&\quad-2\eta(|\nabla u|^r-\delta^r)|\nabla u|^{m-1}
\left\langle
\left(
I+(m-2)\frac{\nabla u\otimes\nabla u}{|\nabla u|^2}
\right)\nabla|\nabla u|,\nabla\eta
\right\rangle\\
&\quad+\lambda\eta^2e^u|\nabla u|^{q+2}
(|\nabla u|^r-\delta^r)\\
&\quad+q\lambda\eta^2e^u|\nabla u|^{q-1}
(|\nabla u|^r-\delta^r)
\langle\nabla u,\nabla|\nabla u|\rangle.
\end{aligned}
\]
Integrating over $\{|\nabla u|>\delta\}$, the divergence term vanishes by the compact support of $\eta$ and the
zero value of the truncation on $\{|\nabla u|=\delta\}$.
Using \eqref{eq:hessian} and
$\operatorname{Ric}(\nabla u,\nabla u)
\ge -(n-1)K|\nabla u|^2$,
we obtain
\[
\begin{aligned}
&r\int_{|\nabla u|>\delta}\eta^2|\nabla u|^{m+r-2}
 \left(\bigl|\nabla|\nabla u|\bigr|^2
 +(m-2)\frac{\langle\nabla u,\nabla|\nabla u|\rangle^2}{|\nabla u|^2}\right)\dmu\\
&\quad+\frac{\lambda^2}{n+m-2}\int_{|\nabla u|>\delta}
 \eta^2e^{2u}|\nabla u|^{2q-m+2}(|\nabla u|^r-\delta^r)\dmu\\
&\le \int_{|\nabla u|>\delta}\eta^2\lambda e^u
 |\nabla u|^{q+2}(|\nabla u|^r-\delta^r)\dmu
 +(n-1)K\int_{|\nabla u|>\delta}\eta^2
 |\nabla u|^m(|\nabla u|^r-\delta^r)\dmu\\
&\quad+q\int_{|\nabla u|>\delta}\eta^2\lambda e^u
 |\nabla u|^{q-1}(|\nabla u|^r-\delta^r)
 \bigl|\langle\nabla u,\nabla|\nabla u|\rangle\bigr|\dmu\\
&\quad+2\int_{|\nabla u|>\delta}
 \eta|\nabla u|^{m-1}(|\nabla u|^r-\delta^r)
 \left|\left\langle
 \left(I+(m-2)\frac{\nabla u\otimes\nabla u}{|\nabla u|^2}\right)
 \nabla|\nabla u|,\nabla\eta\right\rangle\right|\dmu.
\end{aligned}
\]
We first estimate the first term on the right-hand side.
By Young's inequality $ab\le
\varepsilon a^{\frac{2(q-m+1)}{q-m+2}}
+C_\varepsilon
b^{\frac{2(q-m+1)}{q-m}}
$, we obtain
\begin{equation}\label{eq:young-source}
\begin{aligned}
&\int_{|\nabla u|>\delta}
\eta^2\lambda e^u|\nabla u|^{q+2}
(|\nabla u|^r-\delta^r)\,d\mu\\
&\le \frac{\lambda^2}{4(n+m-2)}
\int_{|\nabla u|>\delta}
\eta^2e^{2u}|\nabla u|^{2q-m+2}
(|\nabla u|^r-\delta^r)\,d\mu\\
&\quad+C\lambda^{-2/(q-m)}
\int_{|\nabla u|>\delta}
\eta^2|\nabla u|^m
(|\nabla u|^r-\delta^r)\,d\mu.
\end{aligned}
\end{equation}
Next, by the Cauchy--Schwarz inequality, we have
\[
|\nabla|\nabla u||^2
+(m-2)
\frac{\langle\nabla u,\nabla|\nabla u|\rangle^2}
{|\nabla u|^2}\ge
(m-1)
\frac{\langle\nabla u,\nabla|\nabla u|\rangle^2}
{|\nabla u|^2}.
\]
Moreover, on $\{|\nabla u|>\delta\}$,
\begin{equation}\label{eqdelta}
(|\nabla u|^r-\delta^r)^2\le|\nabla u|^r(|\nabla u|^r-\delta^r).
\end{equation}
Applying Young's inequality
$ab\le a^2/4+b^2$, we obtain
\[
\begin{aligned}
&q\lambda e^u|\nabla u|^{q-1}
(|\nabla u|^r-\delta^r)
|\langle\nabla u,\nabla|\nabla u|\rangle|\\
&\le
\frac{r(m-1)}4
|\nabla u|^{m+r-4}
\langle\nabla u,\nabla|\nabla u|\rangle^2+
\frac{q^2}{r(m-1)}
\lambda^2e^{2u}|\nabla u|^{2q-m-r+2}
(|\nabla u|^r-\delta^r)^2\\
&\le
\frac r4|\nabla u|^{m+r-2}
\left(
|\nabla|\nabla u||^2
+(m-2)
\frac{\langle\nabla u,\nabla|\nabla u|\rangle^2}
{|\nabla u|^2}
\right)+
\frac{q^2}{r(m-1)}
\lambda^2e^{2u}|\nabla u|^{2q-m+2}
(|\nabla u|^r-\delta^r).
\end{aligned}
\]
Multiplying by $\eta^2$ and integrating over
$\{|\nabla u|>\delta\}$ yields
\begin{equation}\label{eq:young-drift}
\begin{aligned}
&q\int_{|\nabla u|>\delta}
\eta^2\lambda e^u|\nabla u|^{q-1}
(|\nabla u|^r-\delta^r)
\bigl|\langle\nabla u,\nabla|\nabla u|\rangle\bigr|
\,d\mu\\
&\le\frac r4\int_{|\nabla u|>\delta}
\eta^2|\nabla u|^{m+r-2}
\left(
|\nabla|\nabla u||^2
+(m-2)
\frac{\langle\nabla u,\nabla|\nabla u|\rangle^2}
{|\nabla u|^2}
\right)\,d\mu\\
&\quad+\frac{q^2}{r(m-1)}
\int_{|\nabla u|>\delta}
\eta^2\lambda^2e^{2u}
|\nabla u|^{2q-m+2}
(|\nabla u|^r-\delta^r)\,d\mu.
\end{aligned}
\end{equation}
Finally, we estimate the last term. Since the eigenvalues of $
I+(m-2)\frac{\nabla u\otimes\nabla u}{|\nabla u|^2}
$ are $1$ and $m-1$, the Cauchy--Schwarz inequality gives
\[
\begin{aligned}
&
\left\langle
\left(
I+(m-2)\frac{\nabla u\otimes\nabla u}{|\nabla u|^2}
\right)
\nabla|\nabla u|,\nabla\eta
\right\rangle^2
\le
\max\{1,m-1\}|\nabla\eta|^2
\left(
|\nabla|\nabla u||^2
+(m-2)
\frac{\langle\nabla u,\nabla|\nabla u|\rangle^2}
{|\nabla u|^2}
\right).
\end{aligned}
\]
by \eqref{eqdelta}
and Young's inequality $2ab\le\frac r4a^2+\frac4r b^2,$ we obtain
\[
\begin{aligned}
&2\eta|\nabla u|^{m-1}
(|\nabla u|^r-\delta^r)
\left|
\left\langle
\left(
I+(m-2)\frac{\nabla u\otimes\nabla u}{|\nabla u|^2}
\right)
\nabla|\nabla u|,\nabla\eta
\right\rangle
\right|\\
&\le
2\sqrt{\max\{1,m-1\}}\,
\eta|\nabla u|^{m-1}
(|\nabla u|^r-\delta^r)
|\nabla\eta|\times
\left(
|\nabla|\nabla u||^2
+(m-2)
\frac{\langle\nabla u,\nabla|\nabla u|\rangle^2}
{|\nabla u|^2}
\right)^{1/2}\\
&\le
2\sqrt{\max\{1,m-1\}}\,
\eta|\nabla u|^{m+r/2-1}
(|\nabla u|^r-\delta^r)^{1/2}
|\nabla\eta|\times
\left(
|\nabla|\nabla u||^2
+(m-2)
\frac{\langle\nabla u,\nabla|\nabla u|\rangle^2}
{|\nabla u|^2}
\right)^{1/2}\\
&\le
\frac r4\eta^2|\nabla u|^{m+r-2}
\left(
|\nabla|\nabla u||^2
+(m-2)
\frac{\langle\nabla u,\nabla|\nabla u|\rangle^2}
{|\nabla u|^2}
\right)+
\frac{4\max\{1,m-1\}}r
|\nabla u|^m
(|\nabla u|^r-\delta^r)
|\nabla\eta|^2.
\end{aligned}
\]
Integrating over $\{|\nabla u|>\delta\}$ yields
\begin{equation}\label{eq:young-cutoff}
\begin{aligned}
&2\int_{|\nabla u|>\delta}
\eta|\nabla u|^{m-1}
(|\nabla u|^r-\delta^r)
\left|
\left\langle
\left(
I+(m-2)\frac{\nabla u\otimes\nabla u}{|\nabla u|^2}
\right)
\nabla|\nabla u|,\nabla\eta
\right\rangle
\right|\,d\mu\\
&\le
\frac r4\int_{|\nabla u|>\delta}
\eta^2|\nabla u|^{m+r-2}
\left(
|\nabla|\nabla u||^2
+(m-2)
\frac{\langle\nabla u,\nabla|\nabla u|\rangle^2}
{|\nabla u|^2}
\right)\,d\mu\\
&\quad+
\frac{4\max\{1,m-1\}}r
\int_{|\nabla u|>\delta}
|\nabla u|^m
(|\nabla u|^r-\delta^r)
|\nabla\eta|^2\,d\mu.
\end{aligned}
\end{equation}
By the choice of $r$ in the assumption of lemma, we have $\frac{q^2}{r(m-1)}
\le\frac1{2(n+m-2)}.$ Combining \eqref{eq:young-source},
\eqref{eq:young-drift}, and
\eqref{eq:young-cutoff} with the preceding
integral inequality and absorbing the corresponding
terms into the left-hand side, we obtain
\begin{equation}\label{eq:uniform-cutoff-energy}
\begin{aligned}
&\frac r2\int_{|\nabla u|>\delta}
\eta^2|\nabla u|^{m+r-2}
\left(
|\nabla|\nabla u||^2
+(m-2)
\frac{\langle\nabla u,\nabla|\nabla u|\rangle^2}
{|\nabla u|^2}
\right)\,d\mu\\
&\quad+\frac{\lambda^2}{4(n+m-2)}
\int_{|\nabla u|>\delta}
\eta^2e^{2u}|\nabla u|^{2q-m+2}
(|\nabla u|^r-\delta^r)\,d\mu\\
&\le C\left[
\left(K+\lambda^{-2/(q-m)}\right)
\int_{|\nabla u|>\delta}
\eta^2|\nabla u|^m
(|\nabla u|^r-\delta^r)\,d\mu
\right.\\
&\hspace{22mm}\left.
+\int_{|\nabla u|>\delta}
|\nabla u|^m(|\nabla u|^r-\delta^r)
|\nabla\eta|^2\,d\mu
\right].
\end{aligned}
\end{equation}
Here $C=C(n,m,q)$ is independent of $r$ and $\delta$, since $r\ge1$.
As $\delta\downarrow0$, the two terms on the left of \eqref{eq:uniform-cutoff-energy}
increase, whereas its right-hand side is dominated on $\supp\eta$ by the local boundedness of $|\nabla u|$.
Thus monotone and dominated convergence yield \eqref{eq:energy}.
In particular, by the Cauchy--Schwarz inequality,
\[
 \bigl|\nabla|\nabla u|\bigr|^2
 +(m-2)\frac{\langle\nabla u,\nabla|\nabla u|\rangle^2}{|\nabla u|^2}
 \ge\min\{1,m-1\}\bigl|\nabla|\nabla u|\bigr|^2.
\]
Hence, dropping the second nonnegative term in \eqref{eq:energy}, we obtain
\[
 \int\eta^2|\nabla u|^{m+r-2}\bigl|\nabla|\nabla u|\bigr|^2\dmu
 \le\frac Cr\left[
 \left(K+\lambda^{-2/(q-m)}\right)\int\eta^2|\nabla u|^{m+r}\dmu
 +\int|\nabla u|^{m+r}|\nabla\eta|^2\dmu
 \right].
\]

To justify the subsequent use of the chain rule,
consider the truncations
\[
\left(
|\nabla u|^{(m+r)/2}
-\delta^{(m+r)/2}
\right)_+.
\]
Since
$u$ is smooth on $\{|\nabla u|>0\}$.
Moreover,
\[
\begin{aligned}
&\int\eta^2
\left|
\nabla\left(
|\nabla u|^{(m+r)/2}
-\delta^{(m+r)/2}
\right)_+
\right|^2\dmu\\
&=\frac{(m+r)^2}{4}
\int_{|\nabla u|>\delta}
\eta^2|\nabla u|^{m+r-2}
\bigl|\nabla|\nabla u|\bigr|^2\dmu\\
&\le\frac{C(m+r)^2}{r}\left[
\left(K+\lambda^{-2/(q-m)}\right)
\int\eta^2|\nabla u|^{m+r}\dmu
+\int|\nabla u|^{m+r}|\nabla\eta|^2\dmu
\right].
\end{aligned}
\]
Since the right-hand side is independent of $\delta$, choosing $\eta=1$ on any relatively compact subdomain
gives a uniform local $L^2$ bound for the gradients of these truncations.
Furthermore,
\[
0\le
\left(
|\nabla u|^{(m+r)/2}-\delta^{(m+r)/2}
\right)_+
\le |\nabla u|^{(m+r)/2},
\]
which is locally bounded because $u\in C^1_{\loc}$.
Thus the truncations are uniformly bounded in $W^{1,2}$
on every relatively compact subdomain.

As $\delta\downarrow0$, they converge locally uniformly to $|\nabla u|^{(m+r)/2}$.
By weak compactness in $W^{1,2}$,
\[
|\nabla u|^{(m+r)/2}\in W^{1,2}_{\loc}(B(o,2R)).
\]
Its weak gradient vanishes almost everywhere on $\{|\nabla u|=0\}$.

We can now apply the product and chain rules to obtain
\[
\begin{aligned}
&\left|\nabla\left(\eta|\nabla u|^{(m+r)/2}\right)\right|^2\\
&=\left|
|\nabla u|^{(m+r)/2}\nabla\eta
+\frac{m+r}{2}\eta|\nabla u|^{(m+r-2)/2}\nabla|\nabla u|
\right|^2\\
&\le2|\nabla u|^{m+r}|\nabla\eta|^2
+\frac{(m+r)^2}{2}\eta^2|\nabla u|^{m+r-2}\bigl|\nabla|\nabla u|\bigr|^2.
\end{aligned}
\]
Integrating and using the preceding energy estimate, we obtain
\[
\begin{aligned}
&\int\left|\nabla\left(\eta|\nabla u|^{(m+r)/2}\right)\right|^2\dmu\\
&\le2\int|\nabla u|^{m+r}|\nabla\eta|^2\dmu\\
&\quad+\frac{C(m+r)^2}{r}\left[
\left(K+\lambda^{-2/(q-m)}\right)
\int\eta^2|\nabla u|^{m+r}\dmu
+\int|\nabla u|^{m+r}|\nabla\eta|^2\dmu
\right]\\
&\le C(m+r)^2\left[
\left(K+\lambda^{-2/(q-m)}\right)
\int\eta^2|\nabla u|^{m+r}\dmu
+\int|\nabla u|^{m+r}|\nabla\eta|^2\dmu
\right],
\end{aligned}
\]
where we used $r\ge1$ in the last step.
This proves \eqref{eq:moser-energy}.
The integration by parts above is justified by smooth approximation of the compactly supported truncation on the noncritical set.
\end{proof}

\begin{lemma}\label{lem:initial}
Suppose that $B(x,4\rho)\Subset B(o,2R)$, and fix
\[
r=\max\left\{1,2-m,\frac{2(n+m-2)q^2}{m-1}\right\}.
\]
Then
\begin{equation}\label{eq:initial}
 \begin{aligned}
 &\left(\frac1{\mu(B(x,2\rho))}
     \int_{B(x,3\rho/2)}|\nabla u|^{r+2q-m+2}\dmu\right)^{1/(r+2q-m+2)}\\
 &\qquad\le C\left[\lambda^{-2}
     \left(K+\lambda^{-2/(q-m)}+\rho^{-2}\right)\right]^{1/(2q-2m+2)}.
 \end{aligned}
\end{equation}
\end{lemma}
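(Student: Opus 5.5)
The plan is to run Lemma~\ref{lem:energy} with the fixed exponent $r$ and a cutoff adapted to the smaller ball, keeping only the good term on its left-hand side. Since $u>0$, we have $e^{2u}\ge1$, so that term dominates $\frac{\lambda^2}{4(n+m-2)}\int\eta^2|\nabla u|^{2q-m+r+2}\dmu$. Take $\eta\in C_0^\infty(B(x,2\rho))$ with $0\le\eta\le1$, $\eta=1$ on $B(x,3\rho/2)$ and $|\nabla\eta|\le C/\rho$. For the absorption step we use $\eta^{s}$ in place of $\eta$, with $s$ large, say $s=(2q-m+r+2)/(2q-2m+2)$. Then $|\nabla\eta^s|^2\le C\rho^{-2}\eta^{2s-2}$, and \eqref{eq:energy} yields
\[
\lambda^2\int\eta^{2s}|\nabla u|^{p}\dmu\le C\Big[(K+\lambda^{-2/(q-m)})\int\eta^{2s}|\nabla u|^{m+r}\dmu+\rho^{-2}\int\eta^{2s-2}|\nabla u|^{m+r}\dmu\Big],
\]
where $p:=r+2q-m+2$. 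We drop the nonnegative gradient term on the left.

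The key step is to absorb the right-hand side into the left using $p>m+r$, which holds because $p-(m+r)=2q-2m+2>0$ for $q>m$. With $A:=K+\lambda^{-2/(q-m)}+\rho^{-2}$, each right-hand integrand is at most $A\,\eta^{2s-2}|\nabla u|^{m+r}$. Apply Young's inequality with exponents $p/(m+r)$ and $p/(p-m-r)$:
\[
CA\,\eta^{2s-2}|\nabla u|^{m+r}\le\tfrac{\lambda^2}{2}\eta^{2s}|\nabla u|^{p}+C\,\lambda^{-2(m+r)/(p-m-r)}A^{p/(p-m-r)}\eta^{2s-2p/(p-m-r)}.
\]
The exponent of $\eta$ in the last term is nonnegative by the choice of $s$, so that term is harmless since $\eta\le1$. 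After absorbing, we get
\[
\lambda^2\int_{B(x,3\rho/2)}|\nabla u|^p\dmu\le C\lambda^{-2(m+r)/(2q-2m+2)}A^{p/(2q-2m+2)}\mu(B(x,2\rho)).
\]
Equivalently,
\[
\frac1{\mu(B(x,2\rho))}\int_{B(x,3\rho/2)}|\nabla u|^p\dmu\le C\big(\lambda^{-2}A\big)^{p/(2q-2m+2)},
\]
since $-2-2(m+r)/(2q-2m+2)=-2p/(2q-2m+2)$. Taking the $1/p$ power gives \eqref{eq:initial}.

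The main obstacle is bookkeeping rather than a genuine difficulty. First, one must check that the constant in \eqref{eq:energy} is uniform and that applying it with $\eta^s$ in place of $\eta$ is legitimate; this is fine because $\eta^s$ is smooth, nonnegative and compactly supported. Second, the powers of $\lambda$ must combine exactly into $(\lambda^{-2}A)^{1/(2q-2m+2)}$, which the Young exponents above are chosen to ensure. Throughout, we use $e^{2u}\ge1$, which is where positivity of $u$ enters. We also need $B(x,2\rho)\Subset B(o,2R)$ so that the cutoff is admissible, and this is guaranteed by the hypothesis $B(x,4\rho)\Subset B(o,2R)$.
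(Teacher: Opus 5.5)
Your proposal is correct and follows essentially the paper's route. The paper also inserts a power $\eta=\xi^N$ of a cutoff into \eqref{eq:energy}, discards the derivative term, uses $e^{u}\ge1$, and closes with H\"older's inequality and division by the $(m+r)/p$ power; your pointwise Young inequality with absorption is equivalent bookkeeping and also avoids the separate case of a vanishing integral. The only slip is the claim that $\eta^s$ is smooth: for non-integer $s$ it need not be $C^\infty$, so take $s$ to be an integer at least $p/(2q-2m+2)$, as the paper does with $N$, which keeps the leftover power of $\eta$ nonnegative.
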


\begin{proof}
Choose $\xi\in C_0^\infty(B(x,2\rho))$ such that $0\le\xi\le1$,
$\xi=1$ on $B(x,3\rho/2)$, and $|\nabla\xi|\le C/\rho$.
Choose an integer $N>\frac{r+2q-m+2}{2q-2m+2}$ and take $\eta=\xi^N$ in \eqref{eq:energy}.
Discarding the derivative term and using $e^u\ge1$, we obtain
\[
 \begin{aligned}
 &\lambda^2\int_{B(x,2\rho)}
       \xi^{2N}|\nabla u|^{r+2q-m+2}\dmu\\
 &\qquad\le C\left(K+\lambda^{-2/(q-m)}+\rho^{-2}\right)
       \int_{B(x,2\rho)}
       \xi^{2N-2}|\nabla u|^{m+r}\dmu.
 \end{aligned}
\]
Here
\[
 |\nabla\eta|^2\le C\rho^{-2}\xi^{2N-2},
 \qquad
 \eta^2\le\xi^{2N-2}.
\]
Since $2N-2\ge\frac{2N(m+r)}{r+2q-m+2}$ and $0\le\xi\le1$, H\"older's inequality gives
\[
 \begin{aligned}
 &\int_{B(x,2\rho)}
       \xi^{2N-2}|\nabla u|^{m+r}\dmu\\
 &\quad\le
 \left(\int_{B(x,2\rho)}
       \xi^{2N}|\nabla u|^{r+2q-m+2}\dmu
       \right)^{(m+r)/(r+2q-m+2)}\\
 &\qquad\qquad\cdot\mu(B(x,2\rho))^{(2q-2m+2)/(r+2q-m+2)}.
 \end{aligned}
\]
If the integral on the right is zero, the conclusion is immediate.
Otherwise, dividing the preceding energy estimate by its power
$(m+r)/(r+2q-m+2)$ gives
\[
 \begin{aligned}
 &\left(\frac1{\mu(B(x,2\rho))}\int_{B(x,2\rho)}
       \xi^{2N}|\nabla u|^{r+2q-m+2}\dmu
       \right)^{(2q-2m+2)/(r+2q-m+2)}\\
 &\qquad\le C\lambda^{-2}
       \left(K+\lambda^{-2/(q-m)}+\rho^{-2}\right).
 \end{aligned}
\]
Since $\xi=1$ on the inner ball, this proves~\eqref{eq:initial}.
\end{proof}

\begin{proof}[Proof of Proposition~\ref{thm:gradient}]
Fix $x\in B(o,R)$ and choose
\begin{equation}\label{eq:scale}
 \rho=\frac1{16}\min\{R,K^{-1/2},\lambda^{1/(q-m)}\},
\end{equation}
where $K^{-1/2}$ is understood to be $+\infty$ when $K=0$.
Then $B(x,4\rho)\Subset B(o,2R)$, and
\[
 K\rho^2\le\frac1{256},\qquad
 \lambda^{-2/(q-m)}\rho^2\le\frac1{256}.
\]
Hence both the exponential factor in Lemma~\ref{lem:sobolev} and
$(K+\lambda^{-2/(q-m)})\rho^2$ are uniformly bounded.

Keep $r$ as in Lemma~\ref{lem:initial}, and choose the iteration exponents and radii
\[
 \gamma_j=(r+2q-m+2)\chi^j,\qquad
 t_j=\rho(1+2^{-j-1}),\qquad j=0,1,\ldots.
\]
Choose $\eta_j\in C_0^\infty(B(x,t_j))$ such that
$0\le\eta_j\le1$, $\eta_j=1$ on $B(x,t_{j+1})$, and
$|\nabla\eta_j|\le2^{j+3}/\rho$.
At step $j$, apply Lemma~\ref{lem:energy} with the exponent
$r_j=\gamma_j-m\ge r+2(q-m+1)>r$.
Its hypotheses are therefore satisfied, and the function
$\eta_j|\nabla u|^{\gamma_j/2}$ belongs to $W^{1,2}_0(B(x,2\rho))$.
Substituting it into \eqref{eq:sobolev} and applying \eqref{eq:moser-energy}, we obtain
\begin{equation}\label{eq:iteration}
 \begin{aligned}
 &\left(\frac1{\mu(B(x,2\rho))}
          \int_{B(x,t_{j+1})}|\nabla u|^{\chi\gamma_j}\dmu\right)^{1/\chi}\\
 &\qquad\le C\gamma_j^2 4^j
          \frac1{\mu(B(x,2\rho))}\int_{B(x,t_j)}|\nabla u|^{\gamma_j}\dmu.
 \end{aligned}
\end{equation}
All integrals are normalized by the same volume $\mu(B(x,2\rho))$.
Taking the power $1/\gamma_j$ and using $\gamma_{j+1}=\chi\gamma_j$ gives
\[
 \begin{aligned}
 &\left(\frac1{\mu(B(x,2\rho))}
         \int_{B(x,t_{j+1})}|\nabla u|^{\gamma_{j+1}}\dmu\right)^{1/\gamma_{j+1}}\\
 &\qquad\le(C\gamma_j^2 4^j)^{1/\gamma_j}
      \left(\frac1{\mu(B(x,2\rho))}
         \int_{B(x,t_j)}|\nabla u|^{\gamma_j}\dmu\right)^{1/\gamma_j}.
 \end{aligned}
\]
Since
\[
 \sum_{j=0}^\infty\frac1{\gamma_j}
 =\frac\chi{(r+2q-m+2)(\chi-1)},\qquad
 \sum_{j=0}^\infty\frac j{\gamma_j}
 =\frac\chi{(r+2q-m+2)(\chi-1)^2},
\]
the corresponding infinite product converges and depends only on $n,m,q$.
Using the continuity of $|\nabla u|$ and $t_j\downarrow\rho$, we let $j\to\infty$ to obtain
\[
 \sup_{B(x,\rho)}|\nabla u|\le C
 \left(\frac1{\mu(B(x,2\rho))}
       \int_{B(x,3\rho/2)}|\nabla u|^{r+2q-m+2}\dmu\right)^{1/(r+2q-m+2)}.
\]
Applying~\eqref{eq:initial} gives
\begin{equation}\label{eq:small-ball}
 \sup_{B(x,\rho)}|\nabla u|
 \le C\left[\lambda^{-2}
       (K+\lambda^{-2/(q-m)}+\rho^{-2})\right]^{1/(2q-2m+2)}.
\end{equation}

Finally, by~\eqref{eq:scale},
\[
 \rho^{-2}=256\max\{R^{-2},K,\lambda^{-2/(q-m)}\}
 \le256(R^{-2}+K+\lambda^{-2/(q-m)}).
\]
Substituting this into~\eqref{eq:small-ball}, we find
\[
 |\nabla u(x)|\le C\lambda^{-1/(q-m+1)}
 \left[(K+R^{-2})^{1/(2q-2m+2)}+\lambda^{-1/[(q-m)(q-m+1)]}\right].
\]
Since
$\lambda^{-1/(q-m+1)}\lambda^{-1/[(q-m)(q-m+1)]}=\lambda^{-1/(q-m)}$
and $x\in B(o,R)$ was arbitrary, this proves~\eqref{eq:gradient-main}.
\end{proof}

\section{The range \texorpdfstring{$q>m$}{q > m}}\label{sec:global}

Let $q>m$ and let $u>0$ be a $C^1$ weak solution of~\eqref{eq:main}.

\begin{lemma}[Gradient control on high superlevel sets]\label{lem:strong-control}
There exists a constant $D>0$, depending only on $n,m,q$, such that
\begin{equation}\label{eq:strong-control}
 \lambda e^u|\nabla u|^{q-m}\le D\qquad\text{on }M.
\end{equation}
\end{lemma}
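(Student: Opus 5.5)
The plan is to combine Proposition~\ref{thm:gradient} (with $K=0$) with a scaling, or point-selection, argument. The point is that the factor $e^u$ in \eqref{eq:strong-control} can be absorbed into the coefficient $\lambda$ by shifting $u$.

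\textbf{Step 1: a lower-bound version of Proposition~\ref{thm:gradient}.} Positivity of $u$ enters the proof of Proposition~\ref{thm:gradient} only through $e^u\ge1$ in Lemma~\ref{lem:initial}. Everything else (Lemmas~\ref{lem:hessian}--\ref{lem:energy} and the Moser iteration) is insensitive to additive constants.

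\emph{Claim.} If $u\ge b$ on $B(y,2R)$, then $v=u-b$ satisfies $-\Delta_{m,g}v=\lambda e^{b}e^{v}|\nabla v|^q$ there, with $v\ge0$. Replacing $v$ by $v+\varepsilon$ and letting $\varepsilon\to0$, \eqref{eq:gradient-main} gives, when $K=0$,
\[
 |\nabla u(y)|\le C\Bigl[(\lambda e^{b})^{-\frac1{q-m}}+(\lambda e^{b})^{-\frac1{q-m+1}}R^{-\frac1{q-m+1}}\Bigr].
\]
Set $X(y)=\lambda e^{u(y)}|\nabla u(y)|^{q-m}$ and take $b=u(y)-\kappa$. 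Raising the claim to the power $q-m$, and handling each of the two terms separately, yields
\[
 X(y)\le C\,e^{\kappa}\Bigl[1+e^{\kappa/(q-m+1)}\bigl(R\,|\nabla u(y)|\bigr)^{-\frac{q-m}{q-m+1}}\Bigr].
\]
So $X(y)$ is bounded by a constant $C(\kappa)$ as soon as $u\ge u(y)-\kappa$ on a ball $B(y,2R)$ with $R|\nabla u(y)|\gtrsim1$. That is, we need one "natural length" $|\nabla u(y)|^{-1}$ on which $u$ drops by at most $O(1)$.

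\textbf{Step 2: producing such a ball.} At a given point $x_0$ the hypothesis of Step~1 need not hold, because $u$ could drop sharply nearby. I would handle this with the doubling lemma of Pol\'a\v{c}ik--Quittner--Souplet, which applies on the complete metric space $(M,d_g)$. Fix $k\ge1$. For $F=|\nabla u|$ and any $x_0$ with $F(x_0)>0$, the lemma gives a point $y$ such that $F(y)\ge F(x_0)$, $d(x_0,y)\le 2k/F(x_0)$, and $F\le2F(y)$ on $B(y,k/F(y))$. Integrating along geodesics from $y$ then gives $u\ge u(y)-2k$ on that ball. Step~1 with $R=k/(2F(y))$ and $\kappa=2k$ yields $X(y)\le C(k)$, with $C(k)$ depending only on $n,m,q$.

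\textbf{Step 3: transfer back to $x_0$ (main obstacle).} Step~2 controls $X$ at the selected point $y$, not at $x_0$. Comparing $u(x_0)$ with $u(y)$ needs control of $u$ on a path of length $\lesssim k/F(x_0)$, and the doubling lemma does not directly supply it. I would therefore rerun the doubling lemma with the scale-invariant quantity
\[
 F(x)=\bigl(\lambda e^{u(x)}\bigr)^{1/(q-m)}|\nabla u(x)|=X(x)^{1/(q-m)}
\]
in place of $|\nabla u|$. I would argue by contradiction: assume $X(x_0)$ exceeds a large constant. The selected $y$ then satisfies $X(y)\ge X(x_0)$ while $X\le 2^{q-m}X(y)$ nearby. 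On the selected ball, the bound $X\le 2^{q-m}X(y)$ converts into a gradient bound. Because this bound is of the form $|\nabla u|\lesssim(\lambda e^u)^{-1/(q-m)}$, it can be integrated as a differential inequality for $e^{u/(q-m)}$ along geodesics, which shows that $u$ drops by only $O(1)$ over the relevant radius. Step~1 then bounds $X(y)$ by a universal constant, which contradicts $X(y)\ge X(x_0)$ once that constant is chosen large.

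Checking that the radius furnished by the doubling lemma matches the length $|\nabla u(y)|^{-1}$ required in Step~1 is the delicate bookkeeping. If that matching fails, the fallback is a continuity (maximal radius) argument on the set $\{u\ge u(x_0)-2\}$, using the interior estimate on half-balls.
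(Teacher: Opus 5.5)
Your Steps 1 and 2 are sound. (The display in Step 1 does not follow as written. Treating the two terms separately gives $X(y)\le Ce^{\kappa}\bigl[1+(R|\nabla u(y)|)^{-1}\bigr]$, which still has the qualitative consequence you use.)

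The genuine gap is Step 3, and the bookkeeping you flag does fail. With $F=X^{1/(q-m)}$, the doubling ball at the selected point is $B(y,r_y)$ with $r_y=k/F(y)$, so
\[
 r_y|\nabla u(y)|=k\bigl(\lambda e^{u(y)}\bigr)^{-1/(q-m)}.
\]
Step 1 needs $B(y,2R)\subset B(y,r_y)$ with $R|\nabla u(y)|\gtrsim1$, and this forces $\lambda e^{u(y)}\lesssim k^{q-m}$.

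The drop estimate pulls the other way. The bound $F\le2F(y)$ on $B(y,r_y)$ only gives $|\nabla e^{u/(q-m)}|\le2F(y)(q-m)^{-1}\lambda^{-1/(q-m)}$. That is an \emph{absolute} drop of $e^{u/(q-m)}$ of at most $2k(q-m)^{-1}\lambda^{-1/(q-m)}$ across the ball, which is an $O(1)$ drop of $u$ only if $\lambda e^{u(y)}\gtrsim k^{q-m}$. Shrinking to radius $\varepsilon_0/|\nabla u(y)|$ repairs the drop, but that ball fits inside $B(y,r_y)$ only when again $\lambda e^{u(y)}\lesssim k^{q-m}$.

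So in the regime $\lambda e^{u(y)}\gg k^{q-m}$, Step 1 yields only $X(y)\le C\bigl(1+k^{-1}(\lambda e^{u(y)})^{1/(q-m)}\bigr)$, which contradicts nothing. You cannot exclude this regime, for three reasons:
\begin{itemize}
\item $u$ is not yet known to be bounded; this lemma is exactly what Lemma~\ref{lem:boundedness} uses to prove boundedness.
\item The doubling lemma gives no control of $u$ along the path from $x_0$ to $y$.
\item The bound $X\le Ce^u$ already follows from $\sup_M|\nabla u|\le C\lambda^{-1/(q-m)}$, so large $u$ is precisely where the lemma has content.
\end{itemize}

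The missing idea is to tie the radius to the gradient on the whole region where the lower bound on $u$ is needed, not to a pointwise selected value. The paper sets $S_k=\sup_{\{u>k\}}|\nabla u|$ for integers $k\ge0$. Integrating along a minimizing geodesic up to its first exit time shows $B\bigl(x,1/(2S_k)\bigr)\subset\{u>k\}$ whenever $u(x)>k+1$. The Proposition, applied to $u-k$ with coefficient $\lambda e^k$ and radius $1/(4S_k)$, then gives, for $A_k=(\lambda e^k)^{1/(q-m)}S_k$,
\[
 A_{k+1}\le C\bigl(1+A_k^{1/(q-m+1)}\bigr).
\]
The base case $A_0\le C$ is the Proposition with $K=0$ and $R\to\infty$. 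The sublinear exponent $1/(q-m+1)<1$ keeps $A_k\le C$ for all $k$, and choosing $k<u(x)\le k+1$ gives $X(x)\le e\,A_k^{q-m}\le D$.

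Your fallback (a maximal-radius argument around $x_0$ on $\{u\ge u(x_0)-2\}$) points in this direction. As stated, though, it is pointwise and lacks this recursion, which is what actually closes the argument.
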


\begin{proof}
Taking $K=0$ in Proposition~\ref{thm:gradient} and letting $R\to\infty$ at an arbitrary center gives
\[
 \sup_M|\nabla u|\le C\lambda^{-1/(q-m)}.
\]
Fix an integer $k\ge0$.
All suprema over empty superlevel sets below are understood to be zero.
If $\sup_{\{u>k\}}|\nabla u|=0$, then
$\sup_{\{u>k+1\}}|\nabla u|=0$ as well.
Otherwise, we claim that $u(x)>k+1$ implies
\begin{equation}\label{eq:level-ball}
 B\left(x,\frac1{2\sup_{\{u>k\}}|\nabla u|}\right)
 \subset\{u>k\}.
\end{equation}
Suppose instead that a point $y$ in this ball satisfies $u(y)\le k$.
By the Hopf--Rinow theorem, there is a unit-speed minimizing geodesic
$\gamma:[0,d_g(x,y)]\to M$ joining $x$ to $y$.
Let $t_0$ be the first time at which $u(\gamma(t_0))=k$.
For $0\le t<t_0$, we have $u(\gamma(t))>k$, and hence
\[
 \begin{aligned}
 1<u(x)-k
 &=-\int_0^{t_0}\frac{d}{dt}u(\gamma(t))\,dt\\
 &\le\int_0^{t_0}|\nabla u(\gamma(t))|\,|\dot\gamma(t)|_g\,dt\\
 &\le d_g(x,y)\sup_{\{u>k\}}|\nabla u|<\frac12,
 \end{aligned}
\]
a contradiction. This proves~\eqref{eq:level-ball}.

On the ball in~\eqref{eq:level-ball}, the positive function $u-k$ satisfies
\[
 -\Delta_{m,g}(u-k)=\lambda e^k e^{u-k}|\nabla(u-k)|^q.
\]
Apply Proposition~\ref{thm:gradient} with radius
$(4\sup_{\{u>k\}}|\nabla u|)^{-1}$ and take the supremum over $u(x)>k+1$.
Absorbing the numerical factor into $C$, we obtain
\begin{equation}\label{eq:level-recursion}
 \begin{aligned}
 \sup_{\{u>k+1\}}|\nabla u|
 &\le C(\lambda e^k)^{-1/(q-m)}\\
 &\quad+C(\lambda e^k)^{-1/(q-m+1)}
       \left(\sup_{\{u>k\}}|\nabla u|\right)^{1/(q-m+1)}.
 \end{aligned}
\end{equation}
This also holds when the lower supremum is zero or the higher superlevel set is empty.
Multiplying by $(\lambda e^{k+1})^{1/(q-m)}$ and using
\[
 \frac1{q-m}-\frac1{q-m+1}
       -\frac1{(q-m)(q-m+1)}=0,
\]
we get
\begin{equation}\label{eq:normalized-recursion}
 \begin{aligned}
 &(\lambda e^{k+1})^{1/(q-m)}\sup_{\{u>k+1\}}|\nabla u|\\
 &\qquad\le C\left[1+
    \left((\lambda e^k)^{1/(q-m)}
          \sup_{\{u>k\}}|\nabla u|\right)^{1/(q-m+1)}\right].
 \end{aligned}
\end{equation}
Since $1/(q-m+1)<1$, a sufficiently large upper bound is preserved by this recurrence.
Together with the initial bound
$\lambda^{1/(q-m)}\sup_M|\nabla u|\le C$, induction yields
\[
 (\lambda e^k)^{1/(q-m)}\sup_{\{u>k\}}|\nabla u|
 \le C\qquad\text{for every integer }k\ge0.
\]

Given $x\in M$, choose $k=\lceil u(x)\rceil-1$, so that $k<u(x)\le k+1$.
Then
\[
 \begin{aligned}
 \lambda e^{u(x)}|\nabla u(x)|^{q-m}
 &\le e^{u(x)-k}
    \left((\lambda e^k)^{1/(q-m)}
          \sup_{\{u>k\}}|\nabla u|\right)^{q-m}\\
 &\le C.
 \end{aligned}
\]
This proves~\eqref{eq:strong-control}.
\end{proof}

\begin{lemma}\label{lem:boundedness}
Under the assumptions of this section, $u$ is bounded on $M$.
\end{lemma}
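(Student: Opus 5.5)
The plan is to combine the pointwise control of Lemma~\ref{lem:strong-control} with the weak Harnack inequality~\eqref{eq:weak-harnack} for the positive $m$-superharmonic function $u$. From~\eqref{eq:strong-control} we get
\[
 |\nabla u|\le (D/\lambda)^{1/(q-m)}e^{-u/(q-m)}\qquad\text{on }M,
\]
so the gradient is exponentially small wherever $u$ is large. I will argue by contradiction, assuming $\sup_M u=\infty$.

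\textbf{Step 1: large values of $u$ persist on large balls.} Fix $T$ large and a point $x$ with $u(x)\ge T$. On the superlevel set $\{u>T-1\}$ we have $|\nabla u|\le C_\lambda e^{-(T-1)/(q-m)}$. The first-exit geodesic argument used in the proof of~\eqref{eq:level-ball} then shows that
\[
 B(x,\rho_T)\subset\{u>T-1\},\qquad \rho_T:=c_\lambda e^{(T-1)/(q-m)}.
\]
So $u\ge T-1$ on a ball whose radius grows exponentially in $T$. Note also that $w=e^{u/(q-m)}$ has $|\nabla w|\le C$, so $w$ is globally Lipschitz.

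\textbf{Step 2: transfer the large values to a fixed point $o$.} Since $-\Delta_{m,g}u\ge0$ and $u>0$, Lemma~\ref{lem:harnack} applies. For $R\ge d(o,x)+\rho_T$ it gives
\[
 u(o)\ge\inf_{B(o,R)}u\ge C_H^{-1}\Bigl(\fint_{B(o,2R)}u^\beta\dmu\Bigr)^{1/\beta}
 \ge C_H^{-1}(T-1)\Bigl(\frac{\mu(B(x,\rho_T))}{\mu(B(o,2R))}\Bigr)^{1/\beta}.
\]
Since $B(o,2R)\subset B(x,3R)$, Bishop--Gromov bounds the volume ratio below by $c(n)\bigl(\rho_T/(d(o,x)+\rho_T)\bigr)^n$. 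When $d(o,x)\le\rho_T$ this ratio is $\ge c(n)$, and we get $T-1\le Cu(o)$. Hence $u$ is bounded by $1+Cu(o)$ on the set of points with $d(o,x)\le\rho_{u(x)}$.

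\textbf{Step 3: the main obstacle, points far from $o$.} The difficulty is a point $x$ with $u(x)=T$ but $d(o,x)\gg\rho_T$. The Lipschitz bound on $w$ only gives $\rho_T\lesssim w(o)+d(o,x)$, which points the wrong way. I would first try to exploit Step~1 repeatedly, using the recursion structure of Lemma~\ref{lem:strong-control}. Starting from $x$, the region $\{u>T-1\}$ contains $B(x,\rho_T)$. At any point $y$ of that ball with $u(y)\ge T'$, the region $\{u>T'-1\}$ contains $B(y,\rho_{T'})$. Applying the weak Harnack inequality centered at $x$ (rather than $o$) at the scale $r=d(o,x)$ gives
\[
 u(o)\ge\inf_{B(x,r)}u\gtrsim(T-1)\,(\rho_T/r)^{n/\beta}.
\]
This forces $\rho_T$ to be at most a power of $d(o,x)$, i.e.\ it yields a logarithmic growth bound $u(x)\le C\log(2+d(o,x))$.

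To upgrade logarithmic growth to boundedness, I would feed this growth bound back into the weak Harnack inequality at $o$ with radius $R=d(o,x)$. The set $\{u\ge T-1\}$ contains a ball of radius $e^{(T-1)/(q-m)}$, and with $u\sim\log R$ this radius is polynomial in $R$. If this fails to close, my fallback is to replace $u$ by the auxiliary function $u-k$ on $\{u>k\}$ for each $k$, as in~\eqref{eq:level-recursion}. I would then run the same Harnack comparison level by level, obtaining a contradiction from the uniform constant in~\eqref{eq:normalized-recursion} as $k\to\infty$.
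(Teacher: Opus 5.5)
Your Steps~1 and~2 are correct, but they do not prove the lemma. Step~2 only bounds $u(x)$ at points with $d(o,x)\le\rho_{u(x)}$. The Harnack argument of Step~3 yields $u(x)\le C\log(2+d(o,x))$, which is no better than what the Lipschitz bound for $e^{u/(q-m)}$ already gives, namely \eqref{eq:logarithmic-growth}. The two proposed upgrades are not arguments, and they cannot succeed. Every step of your plan uses only two facts: $u$ is a positive $m$-superharmonic function, and $|\nabla u|\le Ce^{-u/(q-m)}$. The recursion \eqref{eq:normalized-recursion} adds nothing beyond the second fact.

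These two facts do not force boundedness. Take $m=2$, $n\ge3$, $a=1/(q-2)$, and for $W>1$ set $v_W(x)=\frac1a\log(W-L|x|)$ for $|x|\le r_1$ and $v_W(x)=v_W(r_1)(r_1/|x|)^{n-2}$ for $|x|\ge r_1$. Here $r_1$ is fixed by $C^1$ matching, i.e.\ $W=w_1(1+(n-2)\log w_1)$ with $w_1=W-Lr_1$. Then $v_W$ is positive and superharmonic: it is concave inside, harmonic outside, and $C^1$ across $|x|=r_1$. It satisfies $|\nabla v_W|\le\frac La e^{-av_W}$ and $v_W(0)=\frac1a\log W$. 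Now sum the translates $v_{W_k}(\cdot-x_k)$ with $|x_k|=e^k$ on a ray and $W_k=Le^kk^{-3}$, $k\ge k_0$. One checks that the bumps interact negligibly, giving an unbounded positive superharmonic $u$ with $|\nabla u|\le Ce^{-au}$. Thus the weak-Harnack density bound and the ball containment of Step~1 are compatible with $u(x)\approx(q-m)\log d(o,x)$ along a sequence, and no iteration of them closes.

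The missing idea is a \emph{subsolution} built from $u$. By \eqref{eq:strong-control}, for $\alpha>D/(m-1)$ the function $e^{\alpha u}$ is $m$-subharmonic; this is \eqref{eq:composition}. This is exactly what the bumps above violate: $e^{\alpha u}$ can have no strict local maximum, while $u$ has one at each bump.

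The paper applies the local supremum bound \eqref{eq:subsolution-bound} with $t=1$ to $e^{\alpha u}$ and splits $B(o,2R)$ into $\{u\le T\}$ and $\{u>T\}$. It controls the relative measure of $\{u>T\}$ by Chebyshev and the weak Harnack inequality for $u$. This gives $\sup_{B(o,R)}e^{\alpha u}\le C_1e^{\alpha T}+\theta\sup_{B(o,2R)}e^{\alpha u}$, with $\theta\le2^{-\alpha(q-m)-1}$ once $T$ is large; $T$ depends on $u(o)$ but not on $R$. Iterating over dyadic radii, the remainder $\theta^j\sup_{B(o,2^jR)}e^{\alpha u}$ tends to zero precisely because of the polynomial growth $e^{\alpha u}\lesssim(1+R)^{\alpha(q-m)}$. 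So your growth bound is a genuine ingredient, but it only closes the argument in combination with the subharmonicity of $e^{\alpha u}$.
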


\begin{proof}
By~\eqref{eq:strong-control},
\[
 \bigl|\nabla e^{u/(q-m)}\bigr|
 =\frac1{q-m}e^{u/(q-m)}|\nabla u|
 \le\frac1{q-m}\left(\frac D\lambda\right)^{1/(q-m)}.
\]
Fix $o\in M$ and $x\ne o$.
By completeness and the Hopf--Rinow theorem, $o$ and $x$ can be joined by a unit-speed minimizing geodesic
$\gamma:[0,d_g(o,x)]\to M$.
Along this geodesic,
\[
 \begin{aligned}
 e^{u(x)/(q-m)}-e^{u(o)/(q-m)}
 &=\int_0^{d_g(o,x)}
   \left\langle\nabla e^{u/(q-m)}(\gamma(t)),\dot\gamma(t)\right\rangle_g\,dt\\
 &\le\frac1{q-m}\left(\frac D\lambda\right)^{1/(q-m)}d_g(o,x).
 \end{aligned}
\]
The same inequality is immediate for $x=o$.
Thus
\begin{equation}\label{eq:logarithmic-growth}
 e^{u(x)/(q-m)}\le e^{u(o)/(q-m)}
  +\frac1{q-m}\left(\frac D\lambda\right)^{1/(q-m)}d_g(o,x).
\end{equation}

Choose $\alpha>D/(m-1)$, depending only on $n,m,q$.
For $\varphi\in C_0^\infty(M)$, test the original equation with
$\alpha^{m-1}e^{\alpha(m-1)u}\varphi$.
Smooth approximation on a relatively compact neighborhood makes this test admissible in~\eqref{eq:weak}.
Using
\[
 |\nabla e^{\alpha u}|^{m-2}\nabla e^{\alpha u}
 =\alpha^{m-1}e^{\alpha(m-1)u}|\nabla u|^{m-2}\nabla u,
\]
we obtain
\begin{equation}\label{eq:composition}
 \Delta_{m,g}(e^{\alpha u})
 =\alpha^{m-1}e^{\alpha(m-1)u}|\nabla u|^m
 \left[(m-1)\alpha-\lambda e^u|\nabla u|^{q-m}\right]\ge0.
\end{equation}
This identity holds in the distributional sense, including at points where the gradient vanishes.
Thus $e^{\alpha u}$ is positive and $m$-subharmonic, whereas $u$ is positive and $m$-superharmonic.
Raising~\eqref{eq:logarithmic-growth} to the power $\alpha(q-m)$ gives
\begin{equation}\label{eq:polynomial-growth}
 \sup_{B(o,R)}e^{\alpha u}\le C_u(1+R)^{\alpha(q-m)},
\end{equation}
where $C_u$ may depend on $u(o)$ and $\lambda$, but is independent of $R$.

Applying~\eqref{eq:weak-harnack} to $u$ gives
\[
 \fint_{B(o,2R)}u^\beta\dmu
 \le C_H^\beta u(o)^\beta.
\]
Hence Chebyshev's inequality yields, for every $T>0$,
\begin{equation}\label{eq:level-measure}
 \frac{\mu(B(o,2R)\cap\{u>T\})}{\mu(B(o,2R))}
 \le C_H^\beta\left(\frac{u(o)}T\right)^\beta.
\end{equation}
Applying~\eqref{eq:subsolution-bound} to $e^{\alpha u}$ with $t=1$ and using~\eqref{eq:level-measure}, we obtain
\begin{equation}\label{eq:bounded-recursion}
 \begin{aligned}
 \sup_{B(o,R)}e^{\alpha u}
 &\le\frac{C_1}{\mu(B(o,2R))}
   \left[\int_{B(o,2R)\cap\{u\le T\}}e^{\alpha u}\dmu
   +\int_{B(o,2R)\cap\{u>T\}}e^{\alpha u}\dmu\right]\\
 &\le C_1e^{\alpha T}
   +C_1C_H^\beta\left(\frac{u(o)}T\right)^\beta
        \sup_{B(o,2R)}e^{\alpha u}.
 \end{aligned}
\end{equation}
Choose $T$ large enough, independently of $R$, that
\[
 C_1C_H^\beta\left(\frac{u(o)}T\right)^\beta
 \le2^{-\alpha(q-m)-1}.
\]
Iterating~\eqref{eq:bounded-recursion} $j$ times, we find
\[
 \sup_{B(o,R)}e^{\alpha u}
 \le C_1e^{\alpha T}\sum_{i=0}^{j-1}2^{-i[\alpha(q-m)+1]}
     +2^{-j[\alpha(q-m)+1]}\sup_{B(o,2^jR)}e^{\alpha u}.
\]
For fixed $R>0$,~\eqref{eq:polynomial-growth} and
$1+2^jR\le2^j(1+R)$ give
\[
 \begin{aligned}
 2^{-j[\alpha(q-m)+1]}\sup_{B(o,2^jR)}e^{\alpha u}
 &\le C_u\,2^{-j[\alpha(q-m)+1]}(1+2^jR)^{\alpha(q-m)}\\
 &\le C_u(1+R)^{\alpha(q-m)}2^{-j}\longrightarrow0.
 \end{aligned}
\]
The geometric sum is at most $2$.
Letting $j\to\infty$, we obtain
\[
 \sup_{B(o,R)}e^{\alpha u}\le2C_1e^{\alpha T}
 \qquad\text{for every }R>0.
\]
The bound is independent of $R$, so $u$ is bounded on $M$.
This bound may depend on $u(o)$ through $T$.
The gradient constants in Proposition~\ref{thm:gradient} and Lemma~\ref{lem:strong-control}
remain independent of the solution.
\end{proof}

\begin{proof}[Proof of Theorem~\ref{thm:liouville} for $q>m$]
By Lemma~\ref{lem:boundedness}, $u$ is bounded.
Choose $\alpha$ as in its proof.
The function $u-\inf_Mu$ is nonnegative and $m$-superharmonic, since
\[
 \Delta_{m,g}(u-\inf_Mu)=\Delta_{m,g}u\le0.
\]
By~\eqref{eq:composition},
\[
 \Delta_{m,g}\left(e^{\alpha(u-\inf_Mu)}-1\right)
 =e^{-\alpha(m-1)\inf_Mu}\Delta_{m,g}(e^{\alpha u})\ge0,
\]
so $e^{\alpha(u-\inf_Mu)}-1$ is nonnegative and $m$-subharmonic.
Since $u$ is bounded,
\begin{equation}\label{eq:comparison-h}
 \alpha(u-\inf_Mu)
 \le e^{\alpha(u-\inf_Mu)}-1
 \le\alpha e^{\alpha(\sup_Mu-\inf_Mu)}(u-\inf_Mu).
\end{equation}
Apply~\eqref{eq:subsolution-bound} to $e^{\alpha(u-\inf_Mu)}-1$ with $t=\beta$ and~\eqref{eq:weak-harnack} to $u-\inf_Mu$.
By~\eqref{eq:comparison-h},
\begin{equation}\label{eq:final-harnack}
 \begin{aligned}
 \sup_{B(o,R)}(u-\inf_Mu)
 &\le\alpha^{-1}\sup_{B(o,R)}\left(e^{\alpha(u-\inf_Mu)}-1\right)\\
 &\le C_\beta e^{\alpha(\sup_Mu-\inf_Mu)}
       \left(\fint_{B(o,2R)}(u-\inf_Mu)^\beta\dmu\right)^{1/\beta}\\
 &\le C_\beta C_H e^{\alpha(\sup_Mu-\inf_Mu)}
       \inf_{B(o,R)}(u-\inf_Mu).
 \end{aligned}
\end{equation}
The constant in~\eqref{eq:final-harnack} is independent of $R$.
For every $\varepsilon>0$, there is a point $x_\varepsilon\in M$ such that
$u(x_\varepsilon)<\inf_Mu+\varepsilon$.
Every sufficiently large ball $B(o,R)$ contains this point, so
\[
 \inf_{B(o,R)}(u-\inf_Mu)\downarrow0.
\]
Thus no attainment of the global infimum is needed.
Fixing an arbitrary point in~\eqref{eq:final-harnack} and letting $R\to\infty$ gives
$u\equiv\inf_Mu$. Hence $u$ is constant.
\end{proof}

\section{The range \texorpdfstring{$m-1<q<m$}{m-1 < q < m}}
\label{sec:remaining}

Let $m-1<q<m$ and let $u>0$ be a $C^1$ weak solution of~\eqref{eq:main}.
We use an auxiliary function and a global vanishing argument to prove constancy.

\subsection{A scalar cutoff function}

Fix
\begin{equation}\label{eq:new-parameters}
 \beta=\frac{(m-q)(n-1)}{4(n-1+|n-m|)}.
\end{equation}
Fix $\ell$, depending only on $n,m,q$, such that
\begin{equation}\label{eq:new-cap-parameters}
 0<\ell\le\min\left\{1,\frac{m-1}{4(m-q)(n-1)},\frac{q-m+1}{4(m-q)}\right\}.
\end{equation}
These parameters remain fixed throughout the section.

\begin{lemma}\label{lem:new-cap}
For $P>0$ and $0<\varepsilon\le1$, define
\begin{equation}\label{eq:new-cap-definition}
 \chi(P,\varepsilon)=(1+\varepsilon P^\ell)^{-2/\ell},
 \qquad
 \phi(P,\varepsilon)=
 \exp\left(\int_0^P
 \frac{(m-q)\chi(t,\varepsilon)}{1+(m-q)t\chi(t,\varepsilon)}\,dt\right).
\end{equation}
Then
\begin{align}
 &1\le\phi(P,\varepsilon)
 \le\min\{1+(m-q)P,C\varepsilon^{-2/\ell}\},
 \label{eq:new-cap-size}\\
 &\chi(P,\varepsilon)\le\varepsilon^{-2/\ell}P^{-2},
 \qquad |P\,\partial_{P}\chi|\le2(1-\chi),
 \label{eq:new-chi-est}
\end{align}
and $\phi(\infty,\varepsilon)<\infty$.
The parameter derivatives satisfy
\begin{equation}\label{eq:new-epsilon-derivatives}
 |(\varepsilon\partial_\varepsilon)\log\phi|+|(\varepsilon\partial_\varepsilon)^2\log\phi|
 +|P\partial_P(\varepsilon\partial_\varepsilon)\log\phi|
 \le C(1-\chi).
\end{equation}
Moreover, $P\partial_P\log\phi$, $(P\partial_P)^2\log\phi$, and
$(\varepsilon\partial_\varepsilon) P\partial_P\log\phi$ are uniformly bounded.
\end{lemma}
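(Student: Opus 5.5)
This is an elementary calculus lemma, and I will prove it by working directly with the integrand
\[
 g(t,\varepsilon)=\frac{(m-q)\chi}{1+(m-q)t\chi},\qquad \chi=\chi(t,\varepsilon),
\]
so that $\log\phi=\int_0^P g\,dt$. Throughout, $0<m-q<1$, $0<\chi\le1$, $0<\varepsilon\le1$, and $\ell\le1$.

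\textbf{Size bounds.} Since $g>0$, we get $\phi\ge1$. From $\chi\le1$ and the monotonicity of $s\mapsto s/(1+ts)$ we get $g\le (m-q)/(1+(m-q)t)$, and integrating gives $\phi\le1+(m-q)P$.

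For the first estimate in \eqref{eq:new-chi-est}, note $(1+\varepsilon P^\ell)^{2/\ell}\ge(\varepsilon P^\ell)^{2/\ell}=\varepsilon^{2/\ell}P^2$. Hence $\chi\le\varepsilon^{-2/\ell}P^{-2}$, which is integrable at infinity.

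For the second size bound, $g\le(m-q)\chi\le\min\{1,\varepsilon^{-2/\ell}t^{-2}\}$. Splitting the integral at $t_0=\varepsilon^{-1/\ell}$ gives $\log\phi\le 2\varepsilon^{-1/\ell}$. This yields finiteness of $\phi(\infty,\varepsilon)$, but only an exponential bound, not the stated $C\varepsilon^{-2/\ell}$.

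To get the polynomial bound, I will instead use $g\le(m-q)/(1+(m-q)t)$ on $[0,t_0]$, which contributes $\log(1+(m-q)t_0)$. On $[t_0,\infty)$ I use $g\le 1/t$ together with the decay $\chi\le (\varepsilon t^\ell)^{-2/\ell}$, so the tail $\int_{t_0}^\infty (m-q)\varepsilon^{-2/\ell}t^{-2}\,dt$ is $O(1)$. Altogether $\phi\le C(1+t_0)\le C\varepsilon^{-1/\ell}\le C\varepsilon^{-2/\ell}$.

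\textbf{Derivatives of $\chi$.} Put $s=\varepsilon P^\ell$. Then $P\partial_P\chi=-2\chi\,s/(1+s)$ and $\varepsilon\partial_\varepsilon\chi=\frac1\ell P\partial_P\chi$. It remains to check $\chi s/(1+s)\le 1-\chi$, i.e.\ $1-(1+s)^{-2/\ell}\ge s(1+s)^{-1-2/\ell}$. Writing $1-(1+s)^{-a}=a\int_0^s(1+\sigma)^{-a-1}\,d\sigma\ge as(1+s)^{-a-1}$ with $a=2/\ell\ge2\ge1$ proves it. Iterating the same formulas shows that all mixed $(\varepsilon\partial_\varepsilon)^i(P\partial_P)^j\chi$ are bounded by $C\chi\,s/(1+s)\le C(1-\chi)$, and also by $C\chi$.

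\textbf{Parameter derivatives of $\log\phi$.} Differentiate under the integral:
\[
 \varepsilon\partial_\varepsilon\log\phi=\int_0^P\frac{(m-q)\,\varepsilon\partial_\varepsilon\chi}{(1+(m-q)t\chi)^2}\,dt .
\]
The integrand is bounded by $C\chi\,\frac{s}{1+s}\cdot\frac{1}{(1+t\chi)^2}$. This is where I expect the main obstacle: I need the $t$-integral up to $P$ to be bounded by $C(1-\chi(P))$. The factor $1-\chi(P)$ is comparable to $\min\{1,s(P)\}$, which is increasing in $P$, and that is what makes the bound plausible.

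\emph{Case $s(P)\le1$.} For $t\le P$ the integrand is at most $C\,s(t)\,\chi/(1+t\chi)^2$, and $s(t)=\varepsilon t^\ell\le s(P)$. Also $\int_0^\infty\chi/(1+t\chi)^2\,dt$ is bounded, since the region $t\le t_0$ gives $\int dt/(1+ct)^2=O(1)$ and the tail is controlled by the decay of $\chi$. Hence the integral is $\le C s(P)\le C(1-\chi)$.

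\emph{Case $s(P)\ge1$.} Here $1-\chi\ge c$, and the full integral over $[0,\infty)$ is bounded by the same estimate.

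The second derivative $(\varepsilon\partial_\varepsilon)^2\log\phi$ is handled identically, using the bounds on $(\varepsilon\partial_\varepsilon)^2\chi$ and on $\varepsilon\partial_\varepsilon$ of the denominator; the latter yields terms $t\,\varepsilon\partial_\varepsilon\chi$ that are absorbed by the factor $(1+t\chi)^{-1}$. The mixed derivative is pointwise: $P\partial_P(\varepsilon\partial_\varepsilon)\log\phi=P\,\varepsilon\partial_\varepsilon g(P)$, and this is at most $C\,\frac{P\chi}{1+P\chi}(1-\chi)\le C(1-\chi)$.

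\textbf{Boundedness of the $P\partial_P$ quantities.} We have $P\partial_P\log\phi=Pg=\frac{(m-q)P\chi}{1+(m-q)P\chi}\in[0,1)$. Applying $P\partial_P$ and $\varepsilon\partial_\varepsilon$ to this expression gives $\frac{(m-q)\,(\text{derivative of }P\chi)}{(1+(m-q)P\chi)^2}$. By the bounds on the derivatives of $\chi$, the numerator is at most $C\,P\chi$, so both quantities are bounded by $C\,\frac{P\chi}{(1+(m-q)P\chi)^2}\le C$.
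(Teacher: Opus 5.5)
Your overall route is the paper's. You differentiate under the integral sign, use $|\varepsilon\partial_\varepsilon\chi|\le C\chi\frac{s}{1+s}$, and pull out $\frac{s(P)}{1+s(P)}\le 1-\chi(P)$ by monotonicity in $t$. This reduces \eqref{eq:new-epsilon-derivatives} to a bound, uniform in $\varepsilon$, on
\[
I(\varepsilon)=\int_0^\infty\frac{(m-q)\chi(t,\varepsilon)}{\bigl(1+(m-q)t\chi(t,\varepsilon)\bigr)^2}\,dt .
\]
The gap is exactly at this uniform bound, and it comes from an arithmetic error you make twice. In the size paragraph you claim $\int_{t_0}^\infty(m-q)\varepsilon^{-2/\ell}t^{-2}\,dt=O(1)$ for $t_0=\varepsilon^{-1/\ell}$. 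It actually equals $(m-q)\varepsilon^{-1/\ell}$, as you computed yourself one paragraph earlier.

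The resulting claim $\phi\le C\varepsilon^{-1/\ell}$ is false. Write the integrand as $\frac1t\cdot\frac{(m-q)t\chi}{1+(m-q)t\chi}$ and note $t\chi\ge 2^{-2/\ell}\min\{t,\varepsilon^{-2/\ell}/t\}$. Then the integrand behaves like $1/t$ on all of $1\ll t\ll\varepsilon^{-2/\ell}$, and $\log\phi(\infty,\varepsilon)=\frac2\ell\log\frac1\varepsilon+O(1)$. So the lemma's bound $C\varepsilon^{-2/\ell}$ is sharp, and you reach it only through a false intermediate inequality.

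The same error sits behind your phrase ``the tail is controlled by the decay of $\chi$''. On the transition range $\varepsilon^{-1/\ell}\le t\le\varepsilon^{-2/\ell}$, $\chi$ is small but $t\chi$ is large. The decay $\chi\le\varepsilon^{-2/\ell}t^{-2}$ alone only gives $I(\varepsilon)\lesssim\varepsilon^{-1/\ell}$, which would destroy \eqref{eq:new-epsilon-derivatives}.

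To repair the size bound, split the integral at $\varepsilon^{-2/\ell}$, as the paper does; this gives $\log\phi\le\log(1+(m-q)\varepsilon^{-2/\ell})+(m-q)$. For $I(\varepsilon)$ you must keep the denominator. For $t\ge\varepsilon^{-1/\ell}$, use the two-sided bound $2^{-2/\ell}At^{-2}\le\chi\le At^{-2}$ with $A=\varepsilon^{-2/\ell}$. This gives
\[
\frac{(m-q)\chi}{(1+(m-q)t\chi)^2}\le\frac{(m-q)A}{\bigl(t+(m-q)2^{-2/\ell}A\bigr)^2},
\]
whose integral is at most $2^{2/\ell}$.

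The paper argues differently, via the identity $\frac{d}{dt}\frac{1}{1+(m-q)t\chi}=-\frac{(m-q)\chi}{(1+(m-q)t\chi)^2}\cdot\frac{1-\varepsilon t^\ell}{1+\varepsilon t^\ell}$. Total variation controls the regions $\varepsilon t^\ell\le\frac12$ and $\varepsilon t^\ell\ge2$, and the pointwise bound $\frac1{4t}$ controls the middle region, giving $I(\varepsilon)\le 6+\frac{\log 2}{2\ell}$.

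Once this is repaired, the rest of your argument is correct and parallels the paper. That covers the $\chi$-derivative bounds, the second and mixed $\varepsilon$-derivatives, and the $P\partial_P$ quantities. Your case split on $s(P)$ is unnecessary, since $\frac{s}{1+s}\le 1-\chi$ holds directly. The step you flagged as the main obstacle is the easy one; the real obstacle is the uniform bound on $I(\varepsilon)$.
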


\begin{proof}
Since $0<\chi\le1$, we have
\[
 0\le
 \frac{(m-q)\chi(t,\varepsilon)}
 {1+(m-q)t\chi(t,\varepsilon)}
 \le\frac{m-q}{1+(m-q)t},
\]
which gives $1\le\phi\le1+(m-q)P$.
Moreover,
\[
 \chi(t,\varepsilon)\le\varepsilon^{-2/\ell}t^{-2}.
\]
Splitting the defining integral at $t=\varepsilon^{-2/\ell}$ gives
\[
 \begin{aligned}
 \log\phi(\infty,\varepsilon)
 &\le
 \log\bigl(1+(m-q)\varepsilon^{-2/\ell}\bigr)
 +(m-q)\varepsilon^{-2/\ell}
 \int_{\varepsilon^{-2/\ell}}^\infty t^{-2}\,dt\\
 &=\log\bigl(1+(m-q)\varepsilon^{-2/\ell}\bigr)+(m-q).
 \end{aligned}
\]
Thus $\phi(\infty,\varepsilon)\le C\varepsilon^{-2/\ell}<\infty$,
proving \eqref{eq:new-cap-size}.

Direct differentiation gives
\begin{equation}\label{eq:new-chi-derivatives}
 P\,\partial_{P}\chi(P,\varepsilon)=-\frac{2\varepsilon P^\ell}{1+\varepsilon P^\ell}\chi,
 \qquad
 \varepsilon\,\partial_{\varepsilon}\chi(P,\varepsilon)
 =-\frac{2\varepsilon P^\ell}{\ell(1+\varepsilon P^\ell)}\chi.
\end{equation}
Since $2/\ell\ge2$,
\[
 \frac{\varepsilon P^\ell}{1+\varepsilon P^\ell}
 \le1-(1+\varepsilon P^\ell)^{-2/\ell}
 =1-\chi.
\]
Combining this with $0<\chi\le1$ gives
$|P\partial_P\chi|\le2(1-\chi)$, proving \eqref{eq:new-chi-est}.

For the parameter derivatives, we first establish a uniform integral bound.
In expressions involving $t$, we write $\chi=\chi(t,\varepsilon)$.
By \eqref{eq:new-chi-derivatives},
\[
 \frac{d}{dt}\frac{1}{1+(m-q)t\chi}
 =
 -\frac{(m-q)\chi}{(1+(m-q)t\chi)^2}
 \frac{1-\varepsilon t^\ell}{1+\varepsilon t^\ell}.
\]
The last factor satisfies
\[
\begin{aligned}
\frac{1-\varepsilon t^\ell}{1+\varepsilon t^\ell}
&\ge\frac13,
&&\text{if }\varepsilon t^\ell\le\frac12,\\
\frac{1-\varepsilon t^\ell}{1+\varepsilon t^\ell}
&\le-\frac13,
&&\text{if }\varepsilon t^\ell\ge2.
\end{aligned}
\]
Since $0<(1+(m-q)t\chi)^{-1}\le1$, integration gives
\[
 \int_0^{(2\varepsilon)^{-1/\ell}}
 \frac{(m-q)\chi}{(1+(m-q)t\chi)^2}\,dt\le3,
 \qquad
 \int_{(2/\varepsilon)^{1/\ell}}^\infty
 \frac{(m-q)\chi}{(1+(m-q)t\chi)^2}\,dt\le3.
\]
On the intermediate interval,
\[
 \frac{(m-q)\chi}{(1+(m-q)t\chi)^2}\le\frac1{4t},
\]
and therefore
\[
 \int_{(2\varepsilon)^{-1/\ell}}^{(2/\varepsilon)^{1/\ell}}
 \frac{(m-q)\chi}{(1+(m-q)t\chi)^2}\,dt
 \le\frac{\log2}{2\ell}.
\]
Combining these estimates yields
\begin{equation}\label{eq:new-I-bound}
 \int_0^\infty
 \frac{(m-q)\chi(t,\varepsilon)}
 {(1+(m-q)t\chi(t,\varepsilon))^2}\,dt
 \le6+\frac{\log2}{2\ell}.
\end{equation}

Differentiating the defining integral gives
\begin{equation}\label{eq:new-epsilon-exact}
 \varepsilon\,\partial_\varepsilon\log\phi(P,\varepsilon)
 =-\frac2\ell\int_0^P
 \frac{(m-q)\chi}{(1+(m-q)t\chi)^2}
 \frac{\varepsilon t^\ell}{1+\varepsilon t^\ell}\,dt.
\end{equation}
A further differentiation, using \eqref{eq:new-chi-derivatives}, gives
\[
 \begin{aligned}
 &\left|
 \varepsilon\,\partial_\varepsilon
 \left(
 \frac{(m-q)\chi}{(1+(m-q)t\chi)^2}
 \frac{\varepsilon t^\ell}{1+\varepsilon t^\ell}
 \right)
 \right|
\le
 \left(1+\frac2\ell\right)
 \frac{(m-q)\chi}{(1+(m-q)t\chi)^2}
 \frac{\varepsilon t^\ell}{1+\varepsilon t^\ell}.
 \end{aligned}
\]
Since
\[
(\varepsilon\partial_\varepsilon)^2
=\varepsilon^2\partial_\varepsilon^2
+\varepsilon\partial_\varepsilon,
\]
this estimate and \eqref{eq:new-epsilon-exact} imply
\[
\begin{aligned}
&|\varepsilon\,\partial_\varepsilon\log\phi|
+|\varepsilon^{2}\,\partial_\varepsilon^2\log\phi|\\
&\qquad\le
C\int_0^P
\frac{(m-q)\chi(t,\varepsilon)}
{(1+(m-q)t\chi(t,\varepsilon))^2}
\frac{\varepsilon t^\ell}{1+\varepsilon t^\ell}\,dt\\
&\qquad\le
C\frac{\varepsilon P^\ell}{1+\varepsilon P^\ell}
\int_0^\infty
\frac{(m-q)\chi(t,\varepsilon)}
{(1+(m-q)t\chi(t,\varepsilon))^2}\,dt\\
&\qquad\le
C\left(6+\frac{\log2}{2\ell}\right)
\frac{\varepsilon P^\ell}{1+\varepsilon P^\ell}\\
&\qquad\le
C\left[1-(1+\varepsilon P^\ell)^{-2/\ell}\right]
=C(1-\chi(P,\varepsilon)).
\end{aligned}
\]
Differentiating \eqref{eq:new-epsilon-exact} with respect to its upper limit,
we also obtain
\[
 \begin{aligned}
 |P\partial_P(\varepsilon\partial_\varepsilon)\log\phi|
 &=
 \frac2\ell
 \frac{(m-q)P\chi}{(1+(m-q)P\chi)^2}
 \frac{\varepsilon P^\ell}{1+\varepsilon P^\ell}\\
 &\le C(1-\chi).
 \end{aligned}
\]
This proves \eqref{eq:new-epsilon-derivatives}.

Finally,
\begin{align}
 P\partial_P\log\phi
 &=\frac{(m-q)P\chi}{1+(m-q)P\chi},\label{Ppartialp}\\
 (P\partial_P)^2\log\phi
 &=\frac{(m-q)P\chi}{(1+(m-q)P\chi)^2}
   \frac{1-\varepsilon P^\ell}{1+\varepsilon P^\ell}.
\end{align}

Their absolute values are bounded by $1$ and $1/4$, respectively.
The remaining mixed derivative equals
$P\partial_P(\varepsilon\partial_\varepsilon)\log\phi$,
which has already been estimated.
\end{proof}

\subsection{Positivity of the scalar quadratic expression}

\begin{lemma}\label{lem:new-Q-positive}
Let $\beta$ and $\ell$ be given by
\eqref{eq:new-parameters}--\eqref{eq:new-cap-parameters}.
There exists a constant $c\ge1$, depending only on $n,m,q$,
such that the following conclusions hold. For $u\ge0$ and $P>0$, set
\[
 \varepsilon(u)=(u+c)^{-4}.
\]
In this lemma, $\chi$ and $\phi$ denote
$\chi(P,\varepsilon(u))$ and $\phi(P,\varepsilon(u))$, respectively.
We regard $u$ and $P$ as independent variables:
all $u$-derivatives are taken with $P$ fixed, including the dependence
of $\chi$ and $\phi$ on $\varepsilon(u)$. Define
\begin{equation}\label{eq:new-t-definition}
 t=t(u,P)=-2P\chi+
 \frac{2}{m-q}
 \left(\frac{\beta}{u+c}-\partial_u\log\phi\right)
 \bigl(1+(m-q)P\chi\bigr).
\end{equation}
Then
\begin{align}
 Q:={}&-\partial_u t
 -P\left(1-\frac{m-q}{2}t\right)\partial_P t
 -\frac{n-m}{2(n-1)}t^2
 \notag\\
 &+\left(m-q-1+\frac{2(m-1)}{n-1}\right)tP
 -2P+\frac{2(m-1)}{n-1}P^2
 \notag\\
 &\ge c_1\frac{(1+P)^2}{(u+c)^2},
 \label{eq:new-Q-positive}
\end{align}
where $c_1>0$ depends only on $n,m,q$.
Moreover, there exist $P_0\ge1$ and $c_2>0$, depending only on
$n,m,q$, such that
\begin{equation}\label{eq:new-Q-strong}
 Q\ge c_2\left\{
 \frac{1}{(u+c)^2}
 +\frac{P+P^2}{u+c}
 +P^2(1-\chi)
 \right\}
 \qquad(P\ge P_0).
\end{equation}
\end{lemma}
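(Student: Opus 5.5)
The plan is to expand $Q$ around the principal part $t_0:=-2P\chi$ of $t$ and to collect terms by their order in $(u+c)^{-1}$. Set
\[
 s:=\frac{2}{m-q}\Bigl(\frac{\beta}{u+c}-\partial_u\log\phi\Bigr),
 \qquad t=t_0+s\bigl(1+(m-q)P\chi\bigr).
\]
Since $\varepsilon(u)=(u+c)^{-4}$, we have $\partial_u=-\frac{4}{u+c}\,\varepsilon\partial_\varepsilon$ on functions of $\varepsilon$ alone. Lemma~\ref{lem:new-cap} then gives
\[
 |\partial_u\log\phi|\le\frac{C(1-\chi)}{u+c},\qquad
 |\partial_u^2\log\phi|+|P\partial_P\partial_u\log\phi|\le\frac{C(1-\chi)}{(u+c)^2},
\]
and $\partial_u\chi=\frac{8\varepsilon P^\ell}{\ell(u+c)(1+\varepsilon P^\ell)}\chi\ge0$. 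Hence $s=\frac{2\beta}{(m-q)(u+c)}+O\bigl((1-\chi)/(u+c)\bigr)$.

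\textbf{Step 1 (order zero).} Substitute $t=t_0$, discard $-\partial_u t_0$ for the moment, and use $P\partial_P(P\chi)=P\chi+P^2\partial_P\chi$. When $\chi\equiv1$, the $P$- and $P^2$-coefficients add up to
\[
 -2P+2P+P^2\Bigl[2(m-q)-\tfrac{2(n-m)}{n-1}-2(m-q-1)-\tfrac{2(m-1)}{n-1}\Bigr]=0.
\]
So the leading part cancels exactly; this is what the choice of $\phi$ is designed to achieve. For general $\chi$, I will write the order-zero part $Q_0$ as a polynomial in $P\chi$ and $P$, with corrections involving $P^2\partial_P\chi$. Using $|P\partial_P\chi|\le2(1-\chi)$ and the smallness of $\ell$ in~\eqref{eq:new-cap-parameters}, I aim to show
\[
 Q_0\ge c\,P^2(1-\chi)-C\,P(1-\chi).
\]
The main input is the term $\frac{2(m-1)}{n-1}P^2(1-\chi)$ coming from $P^2$ versus $P^2\chi$. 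The $\ell$-bound $\ell\le\frac{m-1}{4(m-q)(n-1)}$ is there precisely so that $(m-q)P^2\cdot P\partial_P\chi$ cannot destroy this term. The extra term $2P\partial_u\chi\ge0$ only helps.

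\textbf{Step 2 (order one in $(u+c)^{-1}$).} The $\beta$-part contributes
\[
 -\partial_u\Bigl(\frac{2\beta(1+(m-q)P\chi)}{(m-q)(u+c)}\Bigr)
 =\frac{2\beta(1+(m-q)P\chi)}{(m-q)(u+c)^2}+(\text{terms with }\partial_u\chi\ge0).
\]
This gives the $(1+P)^2/(u+c)^2$ lower bound on the region $P\chi\lesssim1$. The contributions linear in $s$ from $-P(1-\frac{m-q}{2}t)\partial_Pt$, $-\frac{n-m}{2(n-1)}t^2$ and the $tP$ term must be combined at order $P\chi\cdot P\,s$. I expect them to leave a net coefficient proportional to $\beta\bigl(c_*-O(|n-m|/(n-1))\bigr)\frac{P+P^2\chi}{u+c}$ with $c_*>0$. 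The normalization $\beta=\frac{(m-q)(n-1)}{4(n-1+|n-m|)}$ is chosen so that this coefficient is positive and the quadratic term $s^2$ is dominated.

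\textbf{Step 3 (error terms).} The $\partial_u\log\phi$ parts of $s$ are $O\bigl((1-\chi)(1+P)^2/(u+c)\bigr)$. They will be absorbed by $cP^2(1-\chi)$ from Step 1 once $c$ is large, since then $1/(u+c)\le1/c$ is small. The remaining low-$P$ error $CP(1-\chi)$ is harmless: for $P\le P_0$ we have $1-\chi\le C\varepsilon P_0^\ell\le C(u+c)^{-4}$, which is dominated by $\beta/(u+c)^2$. Putting the pieces together gives~\eqref{eq:new-Q-positive} for all $P$, and~\eqref{eq:new-Q-strong} for $P\ge P_0$. In the latter, the $P^2(1-\chi)$ and $(P+P^2)/(u+c)$ terms survive with explicit constants. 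When $\chi$ is small, $P^2\chi$ is replaced by $P^2$ through the $P^2(1-\chi)$ term.

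The main obstacle is Step 2: a sign-exact bookkeeping of the terms of order $P^2/(u+c)$, where $\chi$, $P\partial_P\chi$ and $\beta$ interact. My first attempt will be to write everything in the variable $y=(m-q)P\chi$ and to treat $(1-\chi)$ and $\beta/(u+c)$ as two small parameters. I would then verify positivity of the resulting quadratic form in $(1,y)$ with the stated $\beta$, enlarging $c$ whenever a constant is lost.
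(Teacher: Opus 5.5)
Your architecture matches the paper's. You expand $Q$ in powers of $(u+c)^{-1}$, use the exact cancellation of the order-zero part at $\chi\equiv1$, and extract a multiple of $P^2(1-\chi)$ from the cutoff. You then absorb the $\partial_u\log\phi$ and $\chi\to1$ errors, of size $C(1-\chi)(1+P)^2/(u+c)$, by fixing $P_0$ and then $c$ large.

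The gap is that the decisive computations are only anticipated, and the mechanisms you anticipate are not the real ones. In particular, your sketch never uses $q>m-1$, although positivity of $Q$ genuinely requires it: for $q<m-1$ and any $\beta,\ell>0$, one has $Q<0$ at $P=1$ once $u$ is large. At order zero, with $t=-2P\chi$ and after dropping $2P\partial_u\chi\ge0$, one gets
\[
Q_0=2P(\chi+P\partial_P\chi-1)+2P^2\Bigl\{(m-q)\chi P\partial_P\chi+(1-\chi)\Bigl[(1-\chi)\frac{m-1}{n-1}+\chi(q-m+1)\Bigr]\Bigr\}.
\]
So near $\chi=1$ the positive input is $2(q-m+1)P^2(1-\chi)$, not $\frac{2(m-1)}{n-1}P^2(1-\chi)$. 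The usable constant is $a_0=\min\{\frac{m-1}{n-1},q-m+1\}$, which is why \eqref{eq:new-cap-parameters} also imposes $\ell\le\frac{q-m+1}{4(m-q)}$. Moreover, $|P\partial_P\chi|\le2(1-\chi)$ carries no factor $\ell$, so it cannot make the negative term $(m-q)\chi P\partial_P\chi$ small against $a_0(1-\chi)$. You need $|P\partial_P\chi|\le\ell\chi(-\log\chi)$ together with $\chi^2(-\log\chi)\le1-\chi$, which gives $(m-q)\chi P\partial_P\chi\ge-(m-q)\ell(1-\chi)\ge-\frac{a_0}{4}(1-\chi)$.

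At order $(u+c)^{-1}$, replace $\chi$ by $1$ and $P\partial_P\chi$ by $0$. All $n$-dependent contributions then cancel, and the coefficient is
\[
\frac{2\beta(q-m+1)}{(m-q)(u+c)}\,P\bigl(1+(m-q)P\bigr),
\]
which is positive for every $\beta>0$ exactly because $q>m-1$. It is linear in $\beta$, so no choice of $\beta$ could repair its sign; your expected balance $\beta\bigl(c_*-O(|n-m|/(n-1))\bigr)$ is not what happens.

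The size of $\beta$ matters only for the $P$-independent term of order $(u+c)^{-2}$, namely $\frac{2\beta^2}{(m-q)^2(u+c)^2}\bigl(\frac{m-q}{\beta}-\frac{n-m}{n-1}\bigr)$. This is positive since \eqref{eq:new-parameters} gives $\frac{m-q}{\beta}=4\bigl(1+\frac{|n-m|}{n-1}\bigr)$. The $(u+c)^{-2}$ corrections to the $P$ and $P^2$ coefficients are then absorbed by the order-one terms once $c$ is large.

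Two smaller slips:
\begin{itemize}
\item $|P\partial_P\partial_u\log\phi|$ is only $O\bigl((1-\chi)/(u+c)\bigr)$, not $O\bigl((1-\chi)/(u+c)^2\bigr)$. This is harmless within your error budget.
\item The $\partial_u\chi$-term coming from the $\beta$-part is $-\frac{2\beta}{u+c}P\partial_u\chi\le0$. It is controlled by $2P\partial_u\chi$ only because $c\ge\beta$.
\end{itemize}
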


\begin{proof}
\proofstep{Step 1: Parameter derivative bounds.}

We derive the estimates below for a provisional value
\[
c\ge \max\{1,\beta\},
\]
and fix its final value after choosing $P_0$. Throughout the proof, $C>0$
denotes a constant independent of $u$, $P$, and $c$, whose value may
change from line to line.

Since
\[
\varepsilon(u)=(u+c)^{-4},
\qquad
\partial_u\varepsilon=-\frac{4\varepsilon}{u+c},
\]
the chain rule, with $\varepsilon=\varepsilon(u)$, gives
\begin{align}
\partial_u\log\phi
&=-\frac{4}{u+c}
  (\varepsilon\partial_\varepsilon)\log\phi,\label{eqlogphi}
\\
P\partial_P\partial_u\log\phi
&=-\frac{4}{u+c}
  P\partial_P\!\left[
  (\varepsilon\partial_\varepsilon)\log\phi
  \right],
\\
\partial_u^2\log\phi
&=\frac{4}{(u+c)^2}
  (\varepsilon\partial_\varepsilon)\log\phi
  +\frac{16}{(u+c)^2}
  (\varepsilon\partial_\varepsilon)^2\log\phi,
\\
\partial_u\chi
&=-\frac{4}{u+c}
  (\varepsilon\partial_\varepsilon)\chi.
\end{align}

Combining these identities with Lemma~\ref{lem:new-cap}, we obtain
\begin{equation}\label{eq:new-u-derivative-bounds}
\begin{aligned}
|\partial_u\log\phi|
+|P\partial_P\partial_u\log\phi|
+|\partial_u\chi|
&\le
\frac{C(1-\chi)}{u+c},
\\
|\partial_u^2\log\phi|
&\le
\frac{C(1-\chi)}{(u+c)^2}.
\end{aligned}
\end{equation}

\proofstep{Step 2: Expansion and error control.}

Set
\[
t_0=-2P\chi+
\frac{2\beta}{(m-q)(u+c)}\bigl(1+(m-q)P\chi\bigr).
\]
Then
\[
t-t_0=-\frac{2}{m-q}(\partial_u\log\phi)
\bigl(1+(m-q)P\chi\bigr).
\]
Using \eqref{eq:new-u-derivative-bounds},
$1+(m-q)P\chi\le C(1+P)$, and
$|\chi+P\partial_P\chi|\le3$, we obtain
\[
\begin{aligned}
&|t-t_0|+|P\partial_P(t-t_0)|
+(u+c)|\partial_u(t-t_0)|
\le\frac{C(1-\chi)(1+P)}{u+c},\\
&|t_0|+|P\partial_Pt_0|\le C(1+P).
\end{aligned}
\]
The identities
\[
\begin{aligned}
-\partial_ut_0
&=\frac{2\beta}{(m-q)(u+c)^2}
\bigl(1+(m-q)P\chi\bigr)
+2\left(1-\frac{\beta}{u+c}\right)P\partial_u\chi,\\
1-\frac{m-q}{2}t_0
&=\left(1-\frac{\beta}{u+c}\right)
\bigl(1+(m-q)P\chi\bigr),\\
\partial_Pt_0
&=-2\left(1-\frac{\beta}{u+c}\right)
(\chi+P\partial_P\chi)
\end{aligned}
\]
show how to separate the terms containing parameter derivatives.
Replacing $t$ by $t_0$ in $Q$ and omitting the term containing
$\partial_u\chi$ changes the expression by at most
\[
\frac{C(1-\chi)(1+P)^2}{u+c}.
\]
Expanding the remaining terms and collecting their explicit powers of
$(u+c)^{-1}$ gives
\begin{align}
Q\ge{}&
2P(\chi+P\partial_P\chi-1)
\notag\\
&+2P^2\left\{
(m-q)\chi P\partial_P\chi
+(1-\chi)\left[
\frac{m-1}{n-1}
+\left(
\frac{n-m}{n-1}-(m-q)
\right)\chi
\right]
\right\}
\notag\\
&+\frac{\beta P}{u+c}
\bigl(1+(m-q)P\chi\bigr)
\left[
-4(\chi+P\partial_P\chi)
+\frac{4(n-m)}{(m-q)(n-1)}\chi
\right.
\notag\\
&\hspace{40mm}\left.
+\frac{2}{m-q}
\left(
m-q-1+\frac{2(m-1)}{n-1}
\right)
\right]
\notag\\
&+\frac{\beta^2}{(u+c)^2}
\left[
2P(\chi+P\partial_P\chi)
\bigl(1+(m-q)P\chi\bigr)
+\frac{2\bigl(1+(m-q)P\chi\bigr)}
{(m-q)\beta}
\right.
\notag\\
&\hspace{40mm}\left.
-\frac{2(n-m)\bigl(1+(m-q)P\chi\bigr)^2}
{(m-q)^2(n-1)}
\right]
\notag\\
&-\frac{C(1-\chi)(1+P)^2}{u+c}.
\label{eq:new-Q-expanded}
\end{align}

\proofstep{Step 3: Positivity of the cutoff contribution.}

We first estimate the terms without an explicit factor
$(u+c)^{-1}$. Set
\[
a_0=
\min\left\{
\frac{m-1}{n-1},\,q-m+1
\right\}>0.
\]
By \eqref{eq:new-cap-parameters},
\[
(m-q)\ell\le\frac{a_0}{4}.
\]
From \eqref{eq:new-chi-derivatives} and
$\frac{x}{1+x}\le\log(1+x)$ for $x\ge0$, we obtain
\[
\begin{aligned}
|P\partial_P\chi|
&=
2\chi\frac{\varepsilon P^\ell}{1+\varepsilon P^\ell}
\\
&\le
2\chi\log(1+\varepsilon P^\ell)
=
\ell\chi(-\log\chi).
\end{aligned}
\]
Since
\[
\chi^2(-\log\chi)
\le\chi(1-\chi)
\le1-\chi,
\]
it follows that
\[
(m-q)\chi P\partial_P\chi
\ge
-(m-q)\ell(1-\chi).
\]
Moreover,
\[
\begin{aligned}
&\frac{m-1}{n-1}
+\left(
\frac{n-m}{n-1}-(m-q)
\right)\chi
\\
&\qquad=
(1-\chi)\frac{m-1}{n-1}
+\chi(q-m+1)
\ge a_0.
\end{aligned}
\]
Therefore,
\[
\begin{aligned}
&(m-q)\chi P\partial_P\chi
+(1-\chi)\left[
\frac{m-1}{n-1}
+\left(
\frac{n-m}{n-1}-(m-q)
\right)\chi
\right]
\\
&\qquad\ge
\bigl[a_0-(m-q)\ell\bigr](1-\chi)
\ge\frac{3a_0}{4}(1-\chi).
\end{aligned}
\]
Using $|P\partial_P\chi|\le2(1-\chi)$, we also have
\[
\begin{aligned}
2P(\chi+P\partial_P\chi-1)
&\ge
-2P\bigl[(1-\chi)+|P\partial_P\chi|\bigr]
\\
&\ge
-6P(1-\chi).
\end{aligned}
\]
Consequently,
\begin{equation}\label{eq:new-cutoff-lower-bound}
\begin{aligned}
&2P(\chi+P\partial_P\chi-1)
\\
&\quad
+2P^2\left\{
(m-q)\chi P\partial_P\chi
+(1-\chi)\left[
\frac{m-1}{n-1}
+\left(
\frac{n-m}{n-1}-(m-q)
\right)\chi
\right]
\right\}
\\
&\qquad\ge
(1-\chi)
\left(
\frac{3a_0}{2}P^2-6P
\right).
\end{aligned}
\end{equation}

\proofstep{Step 4: Choice of parameters and absorption.}

We now estimate the terms in \eqref{eq:new-Q-expanded} containing
$\beta/(u+c)$ or $\beta^2/(u+c)^2$. Since
\[
|\chi-1|+|P\partial_P\chi|
\le3(1-\chi)
\]
and
\[
\left|
\bigl(1+(m-q)P\chi\bigr)
-\bigl(1+(m-q)P\bigr)
\right|
=(m-q)P(1-\chi),
\]
replacing $\chi$ by $1$ and $P\partial_P\chi$ by $0$ in these
terms produces an error bounded in absolute value by
\[
\frac{C(1-\chi)(1+P)^2}{u+c}.
\]
Using
\[
-4+\frac{4(n-m)}{(m-q)(n-1)}
+\frac{2}{m-q}
\left(
m-q-1+\frac{2(m-1)}{n-1}
\right)
=
\frac{2(q-m+1)}{m-q},
\]
and collecting the constant, linear, and quadratic terms in $P$, we
write the terms after this replacement as
\[
\begin{aligned}
&\frac{2\beta^2}{(m-q)^2(u+c)^2}
\left(
\frac{m-q}{\beta}
-\frac{n-m}{n-1}
\right)
\\
&\quad
+\left[
\frac{2\beta(q-m+1)}{(m-q)(u+c)}
+\frac{\beta^2}{(u+c)^2}
\left(
2+\frac{2}{\beta}
-\frac{4(n-m)}{(m-q)(n-1)}
\right)
\right]P
\\
&\quad
+\left[
\frac{2\beta(q-m+1)}{u+c}
-\frac{2\beta^2}{(u+c)^2}
\left(
\frac{n-m}{n-1}-(m-q)
\right)
\right]P^2.
\end{aligned}
\]

By the definition of $\beta$,
\[
\frac{m-q}{\beta}
=
4\left(
1+\frac{|n-m|}{n-1}
\right)
>
\frac{n-m}{n-1},
\]
so the constant term is positive. Since $q-m+1>0$, the terms of
order $(u+c)^{-1}$ in the coefficients of $P$ and $P^2$ are also
positive. Fix $b_0>0$, depending only on $n,m,q$, such that
\[
b_0\le\min\left\{
\frac{\beta^2}{(m-q)^2}
\left(\frac{m-q}{\beta}-\frac{n-m}{n-1}\right),\,
\frac{\beta(q-m+1)}{m-q},\,
\beta(q-m+1)
\right\}.
\]
For every sufficiently large $c$, the terms of order $(u+c)^{-2}$
in the coefficients of $P$ and $P^2$ can be absorbed into half of
their positive terms of order $(u+c)^{-1}$. The preceding expression
is therefore bounded below by
\[
b_0\left[
\frac{1}{(u+c)^2}
+\frac{P+P^2}{u+c}
\right].
\]
Combining this estimate with
\eqref{eq:new-cutoff-lower-bound} and
\eqref{eq:new-Q-expanded}, and absorbing the replacement error, gives
\begin{equation}\label{eq:new-Q-combined-bound}
\begin{aligned}
Q\ge{}&
b_0\left[
\frac{1}{(u+c)^2}
+\frac{P+P^2}{u+c}
\right]
\\
&+(1-\chi)
\left(
\frac{3a_0}{2}P^2-6P
\right)
-\frac{C(1-\chi)(1+P)^2}{u+c}.
\end{aligned}
\end{equation}

With $\beta$, $\ell$, $a_0$, and $b_0$ fixed, choose
$P_0\ge\max\{1,8/a_0\}$, independently of $c$, so that
\[
6P\le\frac{3a_0}{4}P^2
\qquad(P\ge P_0).
\]
Since $(1+P)^2\le4P^2$ for $P\ge1$, imposing the additional
lower bound on $c$ given by
\[
\frac{4C}{c}\le\frac{3a_0}{8}
\]
gives
\[
\begin{aligned}
Q\ge{}&
b_0\left[
\frac{1}{(u+c)^2}
+\frac{P+P^2}{u+c}
\right]
\\
&+\frac{3a_0}{8}P^2(1-\chi)
\qquad(P\ge P_0).
\end{aligned}
\]
Thus \eqref{eq:new-Q-strong} holds with
\[
c_2=
\min\left\{
b_0,\frac{3a_0}{8}
\right\}>0.
\]

It remains to prove \eqref{eq:new-Q-positive} for bounded $P$.
If $0<P\le P_0$, then
\[
\begin{aligned}
1-\chi
&=
\frac{2}{\ell}
\int_0^{\varepsilon P^\ell}
(1+s)^{-2/\ell-1}\,ds
\\
&\le
\frac{2}{\ell}\varepsilon P^\ell
\le
\frac{C}{(u+c)^4}.
\end{aligned}
\]
Hence the terms in \eqref{eq:new-Q-combined-bound} containing
$1-\chi$ are bounded below by $-C(u+c)^{-4}$. Dropping the
nonnegative term $b_0(P+P^2)/(u+c)$, we obtain
\[
Q\ge
\frac{b_0}{(u+c)^2}
-\frac{C}{(u+c)^4}.
\]
Imposing also $C/c^2\le b_0/2$, with this $C$ now depending on the
fixed $P_0$ but still independent of $c$, gives
\[
Q\ge
\frac{b_0}{2(u+c)^2}
\ge
\frac{b_0}{2(1+P_0)^2}
\frac{(1+P)^2}{(u+c)^2}
\qquad(0<P\le P_0).
\]

Finally, for $P\ge P_0$, \eqref{eq:new-Q-strong} and $u+c\ge1$
give
\[
\begin{aligned}
Q
&\ge
c_2\left[
\frac{1}{(u+c)^2}
+\frac{P+P^2}{u+c}
\right]
\\
&\ge
c_2\frac{1+P+P^2}{(u+c)^2}
\\
&\ge
\frac{c_2}{2}
\frac{(1+P)^2}{(u+c)^2}.
\end{aligned}
\]
Combining the two ranges proves \eqref{eq:new-Q-positive}, with
\[
c_1=
\min\left\{
\frac{b_0}{2(1+P_0)^2},
\frac{c_2}{2}
\right\}>0.
\]
Finally, fix $c$ large enough to satisfy all the preceding lower bounds.
The parameters $\beta$, $\ell$, $a_0$, and $b_0$ were chosen first,
then $P_0$, and only then $c$. Each choice depends only on $n,m,q$,
so the choices are independent of $u$ and $P$.
\end{proof}

\subsection{From the Bochner calculation to an unweighted divergence inequality}

Fix the parameter $c$ chosen in Lemma~\ref{lem:new-Q-positive}, and write $s=|\nabla u|$. On the
noncritical set $\{s>0\}$, define
\begin{equation}
\label{eq:new-auxiliary} P=\frac{\lambda e^u}{(m-1)s^{m-q}},\qquad \mathcal F=(u+c)^{-\beta}s^{m-q}\phi\bigl(P,(u+c)^{-4}\bigr).
\end{equation}
Set $\mathcal F=0$ on $\{s=0\}$. On each compact set, \eqref{eq:new-cap-size} bounds
$\phi(P,(u+c)^{-4})$ uniformly in $P$. Since $m-q>0$, this extension is continuous at critical
points. Lemma~\ref{lem:regularity} justifies the pointwise calculations on $\{s>0\}$.

\begin{lemma}\label{lem:new-divergence}

Let
\[
\A=I+(m-2)\frac{\nabla u}{s}\otimes\frac{\nabla u}{s}.
\]
There exist constants $c_0,C_0,J>0$, depending only on $n,m,q,\lambda$, such that, on the
noncritical set,
\begin{equation}
\label{eq:new-autonomous-divergence} \operatorname{div}(\A\nabla \mathcal F) \ge c_0\mathcal F^{2/(m-q)} -C_0(1+\log_+\mathcal F)^J\frac{|\nabla \mathcal F|^2}{\mathcal F}.
\end{equation}
Here $\log_+t=\max\{\log t,0\}$. The ellipticity constants of $\A$ are $\min\{1,m-1\}$ and
$\max\{1,m-1\}$, independently of the solution.

\end{lemma}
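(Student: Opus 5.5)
The plan is to compute $\operatorname{div}(\A\nabla\mathcal F)$ on $\{s>0\}$ through the logarithm. Write $\mathcal F=e^{w}$ with
\[
 w=-\beta\log(u+c)+(m-q)\log s+\log\phi(P,\varepsilon(u)).
\]
Then
\[
 \operatorname{div}(\A\nabla\mathcal F)=\mathcal F\bigl(\operatorname{div}(\A\nabla w)+\langle\A\nabla w,\nabla w\rangle\bigr),
\]
and the second term is at most $C|\nabla\mathcal F|^2/\mathcal F$. Since it enters with a plus sign, it may simply be dropped whenever convenient.

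\textbf{Main term.} The term $\operatorname{div}(\A\nabla\log s)$ is handled with the Bochner identity of Lemma~\ref{lem:bochner}. Dividing by $s^{m-2}$, it expresses $\operatorname{div}(\A\nabla s)$ through $|\nabla^2u|^2+(m-2)|\nabla s|^2$, the Ricci term (nonnegative here), the source terms $-\lambda e^us^{q-m+2}$, and the drift $-q\lambda e^us^{q-m}\langle\nabla u,\nabla s\rangle$. We also get derivative terms of $\A$ itself, which can be rewritten using $\operatorname{div}(s^{m-2}\nabla u)=-\lambda e^us^q$. Next we use the frame of Lemma~\ref{lem:hessian}, with $e_1=\nabla u/s$, and decompose
\[
 \nabla s=u_{11}e_1+\sum_{i\ge2}u_{1i}e_i .
\]
The transverse Hessian is bounded below by~\eqref{eq:transverse-hessian}, which produces the quadratic form in $u_{11}/s$, $P$ and $s$. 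The terms $\partial_u\log\phi$ and $\partial_u\beta\log(u+c)$ give contributions $\nabla u$-parallel times $s^2$. The terms $\partial_P\log\phi\,\nabla P$ involve $\nabla P/P=\nabla u-(m-q)\nabla s/s$.

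\textbf{Reduction to Lemma~\ref{lem:new-Q-positive}.} After normalizing by $s^2$, everything is expressed in the variable $X=u_{11}/s^2$ (plus the transverse parts $u_{1i}$, which appear only in nonnegative squares or in $|\nabla w|^2$). The idea is to express $\nabla w$ along $e_1$ as $s$ times an affine function of $X$, with coefficients in $u$ and $P$. The quantity $t(u,P)$ in~\eqref{eq:new-t-definition} is exactly the value of $X$ that makes $\nabla_1 w$ vanish; equivalently, $t$ is the choice that completes the square. Substituting $X=t+(\text{multiple of }\nabla_1 w/s)$ should then give
\[
 \operatorname{div}(\A\nabla w)\ge \frac{s^2}{?}\,Q(u,P)-C\frac{|\nabla w|^2}{\ldots}\cdot(\text{weights}).
\]
Here $Q$ is the expression of Lemma~\ref{lem:new-Q-positive}, whose $-\partial_ut-P(1-\tfrac{m-q}{2}t)\partial_Pt$ part arises from differentiating $t$ along $\nabla u$ and $\nabla P$. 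The cross terms linear in $\nabla w$ have coefficients of size $(1+P)$ times powers of $u+c$ and $\log$-factors. I will absorb them by Cauchy--Schwarz, putting half of $Q\ge c_1(1+P)^2(u+c)^{-2}$ against $C(u+c)^{2}|\nabla w|^2/s^2$.

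\textbf{Conversion to $\mathcal F$.} This gives
\[
 \operatorname{div}(\A\nabla\mathcal F)\ge c\,\mathcal F\,s^2\frac{(1+P)^2}{(u+c)^2}-C(u+c)^{K}\frac{|\nabla\mathcal F|^2}{\mathcal F}.
\]
It remains to compare with $\mathcal F^{2/(m-q)}$ and to control $u+c$ by $\log\mathcal F$. We have $s^{m-q}=(m-1)^{-1}\lambda e^u/P$, so
\[
 \mathcal F\le C(u+c)^{-\beta}\phi\,\frac{e^u}{P}.
\]
Using $\phi\le 1+(m-q)P$, this gives $\mathcal F^{2/(m-q)}\lesssim s^2(1+P)^{2/(m-q)}(u+c)^{-2\beta/(m-q)}$ up to $\phi$ factors. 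The exponent mismatch in $P$ and in $u+c$ is where the $\phi$-cap bound $\phi\le C\varepsilon^{-2/\ell}=C(u+c)^{8/\ell}$ and the strong estimate~\eqref{eq:new-Q-strong} (with its $P^2(1-\chi)$ term) enter. For large $P$ the cap makes $\phi$ bounded, so $\mathcal F\sim s^{m-q}\times$ (a power of $u+c$). For the weight in front of $|\nabla\mathcal F|^2$, I would bound $u+c$ by $C(1+\log_+\mathcal F)$ in the regime where $e^u$ dominates. The remaining regime is handled by noting $\mathcal F\ge (u+c)^{-\beta}s^{m-q}$ together with $P$ bounded.

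\textbf{Main obstacle.} I expect the hardest step to be this final comparison: $(u+c)$-powers must be traded for $(1+\log_+\mathcal F)^J$ uniformly, including where $\mathcal F$ is small but $u$ is large. If a direct bound fails, I would first try splitting according to whether $P\le P_0$ or $P\ge P_0$. In the first case I use $s^{m-q}\sim e^u$, so $\log\mathcal F\gtrsim u$. In the second case I use the extra positivity in~\eqref{eq:new-Q-strong} to absorb the larger weights.
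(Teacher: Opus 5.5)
You have the paper's architecture right up to the absorption step. The paper works with $\mathcal F/\mathcal F_\rho$ rather than $\log\mathcal F$, but this is the same computation. With $\rho=\log s^2$ and $E=\nabla\mathcal F/\mathcal F_\rho$ one has $\nabla\rho=t\nabla u+E$, so $t$ is the value of $2u_{11}/s^2$ at $E=0$. The $E$-free part of $\operatorname{div}(\A\nabla\log\mathcal F)$ is $(m-1)\kappa s^2Q$, where $\kappa=\mathcal F_\rho/\mathcal F=\frac{m-q}{2(1+(m-q)P\chi)}$.

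You drop this factor $\kappa$, which can be as small as $c\,\varepsilon^{1/\ell}$. That is repairable, because coercivity follows from the exact identity
\[
\frac{\kappa s^2Q}{\mathcal F^{(q-m+2)/(m-q)}}=\frac{\lambda}{m-1}\,e^u(u+c)^{\beta(q-m+2)/(m-q)}\frac{\kappa Q}{P\phi^{(q-m+2)/(m-q)}} .
\]
Its factor $e^u$ beats every polynomial loss in $u+c$: from $\kappa$, from $\phi\le C\varepsilon^{-2/\ell}$, and from $Q/P\ge 4c_1(u+c)^{-2}$. The estimate \eqref{eq:new-Q-strong} is not needed there.

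The genuine gap is the weight in front of $|\nabla\mathcal F|^2/\mathcal F$. After Cauchy--Schwarz and the conversion $E=\nabla\mathcal F/(\kappa\mathcal F)$, every error term carries a factor $1/\kappa$, and the linear coefficient also contains $t$. Both are of size $1+P\chi$, which reaches $c(u+c)^{4/\ell}$ near $P\sim(u+c)^{4/\ell}$.

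Such a weight cannot be dominated by $(1+\log_+\mathcal F)^J$ when $P$ is huge. At fixed large $u$, letting $s\to0$ sends $P\to\infty$ while $\mathcal F\le(u+c)^{-\beta}s^{m-q}\phi(\infty,\varepsilon)\to0$.

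Your ``remaining regime with $P$ bounded'' is the wrong one: bounded $P$ means $s^{m-q}\ge c\,e^u$, which is exactly where $e^u$ dominates. Your fallback split at a fixed $P_0$ also does not close. Extra positivity of $Q$ can only shrink $(\text{coefficient})^2/Q$; it never touches the factor $1/\kappa$.

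The missing idea is a $u$-dependent split at $P=T\varepsilon^{-2/\ell}=T(u+c)^{8/\ell}$, with $T\ge\max\{P_0,\sqrt2\}$.
\begin{itemize}
\item Above the threshold, \eqref{eq:new-chi-est} gives $P\chi\le\varepsilon^{-2/\ell}/P\le T^{-1}$ and $\chi\le T^{-2}\le\frac12$. Hence $\kappa\ge c_*$, and $t$, $\mathcal F_{\rho\rho}/\mathcal F_\rho$, $\mathcal F_{u\rho}/\mathcal F_\rho$ are bounded independently of $u$. The linear error is then at most $CPs|E|$, and $Q\ge c_*P^2$ from \eqref{eq:new-Q-strong} absorbs it. The resulting weight is a constant $C$ with no $u$-dependence.
\item Below the threshold, $s^{m-q}\ge c\,e^u(u+c)^{-8/\ell}$, so $\mathcal F\ge c\,e^u(u+c)^{-\beta-8/\ell}$ and $u+c\le C(1+\log_+\mathcal F)$. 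The logarithmic bound therefore covers this whole polynomially large range of $P$, not merely $P\le P_0$.
\end{itemize}
This interplay is why the cap $\chi$ is built to decay like $P^{-2}$.
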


\begin{proof}

Throughout this proof, $c$ remains fixed. The symbols $c_*$ and $C$ denote positive constants
depending only on $n,m,q,\lambda$, whose values may change from line to line.

Since
\[
\A\frac{\nabla u}{s}=(m-1)\frac{\nabla u}{s},
\qquad \A\xi=\xi\quad\text{for }\xi\perp\nabla u,
\]
the eigenvalues of $\A$ are $m-1$ in the direction of $\nabla u$ and $1$
in the orthogonal directions.
Hence
\[
\min\{1,m-1\}I \le \A\le \max\{1,m-1\}I.
\]

\proofstep{Step 1: Derivatives of the auxiliary function.}

Set
\[
\rho=\log(s^2), \qquad \varepsilon=(u+c)^{-4}.
\]
Then
\[
s^{m-q}=e^{\frac{m-q}{2}\rho},\quad P =\frac{\lambda e^u}{(m-1)s^{m-q}} =\frac{\lambda}{m-1}
e^{\,u-\frac{m-q}{2}\rho}.
\]
Hence
\[
\partial_uP=P, \qquad \partial_\rho P=-\frac{m-q}{2}P, \qquad
\partial_u\varepsilon=-\frac{4}{u+c}\varepsilon.
\]
By the definition of $\mathcal F$, we have
\[
\log\mathcal F = -\beta\log(u+c) +\frac{m-q}{2}\rho +\log\phi(P,\varepsilon).
\]
Differentiating with respect to $\rho$ gives
\[
\begin{aligned}
\partial_\rho\log\mathcal F &= \frac{m-q}{2} -\frac{m-q}{2}P\partial_P\log\phi.
\end{aligned}
\]
By \eqref{Ppartialp} in Lemma~\ref{lem:new-cap},
\begin{equation}
\label{eq:new-kappa} \kappa := \frac{\mathcal F_\rho}{\mathcal F} = \partial_\rho\log\mathcal F
= \frac{m-q} {2\bigl(1+(m-q)P\chi\bigr)}.
\end{equation}
Moreover,
\[
P\chi = P(1+\varepsilon P^\ell)^{-2/\ell} \le 4^{-1/\ell}\varepsilon^{-1/\ell},
\]
so, since $0<\varepsilon\le1$,
\begin{equation}
\label{eqkappa} c_*\varepsilon^{1/\ell} \le\kappa\le\frac{m-q}{2}.
\end{equation}

Next,
\[
\begin{aligned}
\partial_u\log\mathcal F ={}& -\frac{\beta}{u+c} +P\partial_P\log\phi -\frac{4}{u+c}
\varepsilon\,\partial_\varepsilon\log\phi.
\end{aligned}
\]
Therefore,
\[
\begin{aligned}
-\frac{\mathcal F_u}{\mathcal F_\rho} &= -\frac{\partial_u\log\mathcal F}{\kappa}\\
&= -2P\chi +\frac{2}{m-q} \left[ \frac{\beta}{u+c} +\frac{4}{u+c}
\varepsilon\,\partial_\varepsilon\log\phi \right] \bigl(1+(m-q)P\chi\bigr).
\end{aligned}
\]
By \eqref{eq:new-t-definition} and \eqref{eqlogphi} in Lemma~\ref{lem:new-Q-positive},
\begin{equation}
\label{eqt} -\frac{\mathcal F_u}{\mathcal F_\rho}=t.
\end{equation}
In Lemma~\ref{lem:new-Q-positive}, $u$ and $P$ are independent variables. After composition with
$P=P(u,\rho)$, the chain rule gives
\[
\begin{aligned}
\frac{\partial}{\partial u}t(u,P(u,\rho)) &=\partial_ut+P\partial_Pt,\\
\frac{\partial}{\partial\rho}t(u,P(u,\rho)) &=-\frac{m-q}{2}P\partial_Pt.
\end{aligned}
\]
Consequently,
\begin{equation}
\label{eq:new-t-chain}
\begin{aligned}
&\frac{\partial}{\partial u}t(u,P(u,\rho)) +t\frac{\partial}{\partial\rho}t(u,P(u,\rho))\\
&\qquad=\partial_ut +P\left(1-\frac{m-q}{2}t\right)\partial_Pt.
\end{aligned}
\end{equation}
This is precisely the derivative combination appearing in the definition of $Q$.

To estimate the second derivatives of $\log\mathcal F$, observe that
$\partial_\rho=-\frac{m-q}{2}P\partial_P$ on functions of $(P,\varepsilon)$, with $u$ fixed. In
particular, $(P\partial_P)^2=P^2\partial_P^2+P\partial_P$. Differentiation gives
\[
\partial_{\rho\rho}\log\mathcal F = \frac{(m-q)^2}{4} (P\partial_P)^2\log\phi,
\]
and
\[
\partial_{u\rho}\log\mathcal F = -\frac{m-q}{2} \left[ (P\partial_P)^2\log\phi -\frac{4}{u+c}
P\partial_P \bigl((\varepsilon\partial_\varepsilon)\log\phi\bigr) \right].
\]
Similarly,
\[
\begin{aligned}
\partial_{uu}\log\mathcal F ={}& \frac{\beta}{(u+c)^2} +(P\partial_P)^2\log\phi -\frac{8}{u+c}
P\partial_P \bigl((\varepsilon\partial_\varepsilon)\log\phi\bigr)\\
&+\frac{4}{(u+c)^2} (\varepsilon\partial_\varepsilon)\log\phi +\frac{16}{(u+c)^2}
(\varepsilon\partial_\varepsilon)^2\log\phi.
\end{aligned}
\]
Lemma~\ref{lem:new-cap} gives
\[
|P\partial_P\log\phi| +\left|(P\partial_P)^2\log\phi\right|\le C
\]
and
\[
\begin{aligned}
& \left| (\varepsilon\partial_\varepsilon)\log\phi \right| + \left|
(\varepsilon\partial_\varepsilon)^2\log\phi \right| + \left| P\partial_P
\bigl((\varepsilon\partial_\varepsilon)\log\phi\bigr) \right| \\
&\qquad\le C(1-\chi)\le C.
\end{aligned}
\]
Together with the first- and second-derivative identities above and $u+c\ge1$, these estimates
yield
\begin{equation}
\label{eq:new-logF-derivative-bounds}
\begin{aligned}
&|\partial_u\log\mathcal F|+|\partial_\rho\log\mathcal F|
+|\partial_{uu}\log\mathcal F|\\
&\qquad+|\partial_{u\rho}\log\mathcal F|
+|\partial_{\rho\rho}\log\mathcal F|\le C.
\end{aligned}
\end{equation}
Finally, we have the identities
\[
\begin{aligned}
t&=-\frac{\partial_u\log\mathcal F}{\kappa},\\
\frac{\mathcal F_{\rho\rho}}{\mathcal F_\rho} &=\kappa+\frac{\partial_{\rho\rho}\log\mathcal F}{\kappa},\\
\frac{\mathcal F_{u\rho}}{\mathcal F_\rho} &=\partial_u\log\mathcal F
+\frac{\partial_{u\rho}\log\mathcal F}{\kappa}.
\end{aligned}
\]
Using \eqref{eqkappa} and \eqref{eq:new-logF-derivative-bounds}, we therefore obtain
\begin{equation}
\label{eq:new-ratio-bounds} |t| +\left|\frac{\mathcal F_{\rho\rho}}{\mathcal F_\rho}-1\right|
+\left|\frac{\mathcal F_{u\rho}}{\mathcal F_\rho}\right| \le C\varepsilon^{-1/\ell}.
\end{equation}
In particular, $\mathcal F_\rho=\kappa\mathcal F>0$ on the noncritical set.

\proofstep{Step 2: Hessian identities in an adapted frame.}

We rewrite the Hessian of $u$ in terms of $P$, $t$, and $\nabla\mathcal F$, using the identities
from Lemma~\ref{lem:hessian}. Set $E=\nabla\mathcal F/\mathcal F_\rho$. The chain rule
$\nabla\mathcal F=\mathcal F_u\nabla u+\mathcal F_\rho\nabla\rho$ and~\eqref{eqt} give
\begin{equation}
\label{eq:new-rho-gradient} \nabla\rho=t\nabla u+E.
\end{equation}
At a noncritical point, use the adapted frame and notation $u_{ij}$ of Lemma~\ref{lem:hessian},
with $e_1=\nabla u/s$. By~\eqref{eq:trace} and the definition of $P$,
\begin{equation}
\label{eq:new-trace} (m-1)u_{11}+\sum_{i=2}^n u_{ii}=-(m-1)Ps^2.
\end{equation}
Since $\rho=\log(s^2)$, identity~\eqref{eq:gradient-frame} gives
\begin{equation}
\label{rho} \nabla\rho =\frac{2u_{11}}s e_1+\sum_{j=2}^n\frac{2u_{1j}}s e_j.
\end{equation}
Comparing this with~\eqref{eq:new-rho-gradient}, namely
\[
\nabla\rho=(ts+E_1)e_1+\sum_{j=2}^n E_j e_j,
\]
we obtain
\begin{equation}
\label{eq:new-hessian-gradient} u_{11}=\frac12(ts^2+sE_1), \qquad u_{1j}=\frac
s2E_j\quad(j\ge2).
\end{equation}
Finally, the transverse estimate~\eqref{eq:transverse-hessian}, with $\lambda
e^us^{q-m+2}=(m-1)Ps^2$, yields
\begin{equation}
\label{eq:new-transverse-hessian} \sum_{i,j=2}^n u_{ij}^2 \ge\frac{(m-1)^2}{n-1}(Ps^2+u_{11})^2.
\end{equation}

The transverse Hessian estimate retains the cross term in
\[
(Ps^2+u_{11})^2 = P^2s^4+2Ps^2u_{11}+u_{11}^2.
\]
After substitution of \eqref{eq:new-hessian-gradient}, this term contributes to the $tP$
coefficient in $Q$.

\proofstep{Step 3: The Bochner calculation and the appearance of $Q$.}

We insert these Hessian identities into the Bochner formula to identify the terms independent of
$E$. The Bochner formula gives
\begin{equation}
\label{eqbochner} \frac12\Delta(s^2) = |\nabla^2u|^2 +\langle\nabla u,\nabla\Delta u\rangle
+\operatorname{Ric}(\nabla u,\nabla u).
\end{equation}
Since $e_1=\nabla u/s$,
\begin{equation}
\label{eqAnabla} \A:\nabla^2(s^2) = \Delta(s^2) +(m-2)\nabla^2(s^2)(e_1,e_1).
\end{equation}
At the chosen point,
\[
\frac12\nabla^2(s^2)(e_1,e_1) = \sum_{j=1}^n u_{1j}^2 +s\,\nabla^3u(e_1,e_1,e_1).
\]
We differentiate the normalized equation
\[
\Delta u+(m-2)u_{11} =-(m-1)Ps^2
\]
in the direction $\nabla u=se_1$. Since
\[
\nabla_{\nabla u/s}(\nabla u/s) = \frac1s\sum_{j=2}^n u_{1j}e_j,
\]
we have
\[
e_1(u_{11}) = \nabla^3u(e_1,e_1,e_1) +\frac2s\sum_{j=2}^n u_{1j}^2.
\]
Consequently,
\begin{align}
\label{eqexpand} & \langle\nabla u,\nabla\Delta u\rangle +(m-2)s\nabla^3u(e_1,e_1,e_1)\notag\\
&\qquad = -(m-1)\langle\nabla u,\nabla(Ps^2)\rangle -2(m-2)\sum_{j=2}^n u_{1j}^2.
\end{align}
Differentiating $P=\frac{\lambda}{m-1}e^{\,u-\frac{m-q}{2}\rho}$ gives
\[
\nabla P = P\left( \nabla u -\frac{m-q}{2}\nabla\rho \right) = P\left( \nabla u -\frac{m-q}{2}
\frac{\nabla(s^2)}{s^2} \right).
\]
Therefore,
\begin{align}
\langle\nabla u,\nabla(Ps^2)\rangle &= s^2\langle\nabla u,\nabla P\rangle +P\langle\nabla u,\nabla(s^2)\rangle\notag\\
&= Ps^4 +\frac{q-m+2}{2}P \langle\nabla u,\nabla(s^2)\rangle.\label{eqPs}
\end{align}
Combining \eqref{eqbochner}--\eqref{eqPs} and expanding $|\nabla^2u|^2$, we obtain
\begin{align}
\A:\nabla^2(s^2) ={}& 2(m-1)u_{11}^2 +2(4-m)\sum_{j=2}^n u_{1j}^2 +2\sum_{i,j=2}^n u_{ij}^2
\notag\\
&+ 2\operatorname{Ric}(\nabla u,\nabla u) -2(m-1)Ps^4 \notag\\
&- (q-m+2)(m-1)P \langle\nabla u,\nabla(s^2)\rangle. \label{eq:new-bochner-normalized}
\end{align}

We now apply the chain rule to $\mathcal F=\mathcal F(u,\rho)$. Writing $\A(X,Y)=\langle\A
X,Y\rangle$, we have
\[
\begin{aligned}
\A:\nabla^2\mathcal F ={}& \mathcal F_u\,\A:\nabla^2u +\mathcal F_\rho\,\A:\nabla^2\rho\\
&+ \mathcal F_{uu}\A(\nabla u,\nabla u) +2\mathcal F_{u\rho}\A(\nabla u,\nabla\rho)\\
&+ \mathcal F_{\rho\rho}\A(\nabla\rho,\nabla\rho).
\end{aligned}
\]
Since $\nabla(s^2)=s^2\nabla\rho$, we have
\[
\begin{aligned}
\A:\nabla^2\rho &=\frac{\A:\nabla^2(s^2)}{s^2} -\A(\nabla\rho,\nabla\rho),\\
\A:\nabla^2u &=-(m-1)Ps^2, \qquad \A(\nabla u,\nabla u)=(m-1)s^2.
\end{aligned}
\]
Moreover, differentiating \eqref{eqt} and using \eqref{eq:new-t-chain} gives
\[
\frac{\mathcal F_{uu}+2t\mathcal F_{u\rho} +t^2\mathcal F_{\rho\rho}}{\mathcal F_\rho}
=-\partial_ut -P\left(1-\frac{m-q}{2}t\right)\partial_Pt.
\]
Substituting these identities together with \eqref{eq:new-hessian-gradient} and
\eqref{eq:new-bochner-normalized} into the chain rule, we collect the $E$-terms and use the
definition of $Q$, retaining the transverse Hessian remainder and the Ricci term. This gives the
following identity.

\begin{equation}
\label{eq:new-offcritical-nondiv}
\begin{aligned}
\frac{\A:\nabla^2\mathcal F}{\mathcal F_\rho} ={}&(m-1)s^2Q +2(m-1)\frac{\mathcal F_{u\rho}}{\mathcal F_\rho}sE_1\\
&+(m-1)\left[ \left(2\frac{\mathcal F_{\rho\rho}}{\mathcal F_\rho} -\frac{n-m}{n-1}\right)t
+\left(\frac{2(m-1)}{n-1}-q+m-2\right)P \right]sE_1\\
&+(m-1)\left( \frac{\mathcal F_{\rho\rho}}{\mathcal F_\rho} -\frac{n-m}{2(n-1)}\right)E_1^2
+\left(\frac{\mathcal F_{\rho\rho}}{\mathcal F_\rho} -\frac{m-2}{2}\right)\sum_{j=2}^nE_j^2\\
&+\frac{2}{s^2}\left[ \sum_{i,j=2}^n u_{ij}^2 -\frac{(m-1)^2}{n-1}(Ps^2+u_{11})^2
+\operatorname{Ric}(\nabla u,\nabla u) \right].
\end{aligned}
\end{equation}
The last bracket is nonnegative by \eqref{eq:new-transverse-hessian} and
$\operatorname{Ric}\ge0$; we discard it in the lower bound below.

\proofstep{Step 4: Passage to divergence form and absorption of the error terms.}

To pass from \eqref{eq:new-offcritical-nondiv} to divergence form, we must include the
derivatives of $\A$. Differentiating its definition gives
\[
\operatorname{div}\A
=(m-2)\left[
\operatorname{div}\left(\frac{\nabla u}{s}\right)\frac{\nabla u}{s}
+\nabla_{\nabla u/s}\left(\frac{\nabla u}{s}\right)
\right].
\]
The identity $\operatorname{div}(\nabla u/s)=s^{-1}\sum_{i=2}^n u_{ii}$, together with
\eqref{eq:new-trace} and \eqref{eq:new-hessian-gradient}, gives
\[
\operatorname{div}(\nabla u/s) = -(m-1)Ps -\frac{m-1}{2}(ts+E_1).
\]
Furthermore,
\[
\nabla_{\nabla u/s}(\nabla u/s) = \frac1s\sum_{j=2}^n u_{1j}e_j = \frac12\sum_{j=2}^nE_je_j.
\]
Therefore
\begin{equation}
\label{eq:new-div-A} \operatorname{div}\A = (m-2) \left\{ -(m-1)Ps\,e_1
-\frac{m-1}{2}(ts+E_1)e_1 +\frac12\sum_{j=2}^nE_je_j \right\}.
\end{equation}
Since $ \operatorname{div}(\A\nabla\mathcal F) = \A:\nabla^2\mathcal F +
\langle\operatorname{div}\A,\nabla\mathcal F\rangle $ and $\nabla\mathcal F=\mathcal F_\rho E$,
combining \eqref{eq:new-offcritical-nondiv} and \eqref{eq:new-div-A} gives
\begin{align}
\frac{\operatorname{div}(\A\nabla\mathcal F)} {\mathcal F_\rho} \ge{}& (m-1)s^2Q \notag\\
&+ (m-1) \Bigg\{ \left[ \frac{4-m}{2} +\frac{m-1}{n-1} +2\left( \frac{\mathcal F_{\rho\rho}}{\mathcal F_\rho}-1 \right) \right]t \notag\\
&\qquad\qquad + \left( \frac{2(m-1)}{n-1}-q \right)P + 2\frac{\mathcal F_{u\rho}}{\mathcal F_\rho} \Bigg\} sE_1 \notag\\
&+ (m-1) \left[ \frac{3-m}{2} +\frac{m-1}{2(n-1)} +\frac{\mathcal F_{\rho\rho}}{\mathcal F_\rho}-1 \right]E_1^2 \notag\\
&+ \frac{\mathcal F_{\rho\rho}}{\mathcal F_\rho} \sum_{j=2}^nE_j^2.
\label{eq:new-offcritical-div}
\end{align}

By \eqref{eq:new-ratio-bounds}, we have
\[
\begin{aligned}
&\Bigg|(m-1)\Bigg\{ \left[ \frac{4-m}{2}+\frac{m-1}{n-1} +2\left(\frac{\mathcal F_{\rho\rho}}{\mathcal F_\rho}-1\right) \right]t\\
&\qquad +\left(\frac{2(m-1)}{n-1}-q\right)P +2\frac{\mathcal F_{u\rho}}{\mathcal F_\rho}
\Bigg\}sE_1\Bigg| \le C\varepsilon^{-2/\ell}(1+P)s|E|,
\end{aligned}
\]
and
\[
\begin{aligned}
&\Bigg| (m-1)\left[ \frac{3-m}{2}+\frac{m-1}{2(n-1)} +\frac{\mathcal F_{\rho\rho}}{\mathcal F_\rho}-1 \right]E_1^2 +\frac{\mathcal F_{\rho\rho}}{\mathcal F_\rho} \sum_{j=2}^nE_j^2
\Bigg|\le C\varepsilon^{-1/\ell}|E|^2.
\end{aligned}
\]
Fix $\delta=(m-1)/4$. Lemma~\ref{lem:new-Q-positive} and Young's inequality give
\[
\begin{aligned}
C\varepsilon^{-2/\ell}(1+P)s|E| &\le \delta s^2Q + C\varepsilon^{-4/\ell}
\frac{(1+P)^2}{Q}|E|^2\\
&\le \delta s^2Q + C\varepsilon^{-4/\ell} (u+c)^2|E|^2.
\end{aligned}
\]
This choice retains a fixed positive portion of the $s^2Q$ term. Since $E=\nabla\mathcal F/(\kappa\mathcal F)$, multiplication by $\mathcal F_\rho=\kappa\mathcal F$ gives
\[
\begin{aligned}
\operatorname{div}(\A\nabla\mathcal F) \ge{}& c_*\kappa\mathcal F s^2Q -
C\varepsilon^{-4/\ell}(u+c)^2 \frac{|\nabla\mathcal F|^2}{\kappa\mathcal F}\\
&- C\varepsilon^{-1/\ell} \frac{|\nabla\mathcal F|^2}{\kappa\mathcal F}.
\end{aligned}
\]
Using $\kappa^{-1}\le C\varepsilon^{-1/\ell}$ and $u+c\ge1$, we bound the sum of the two
negative terms in absolute value by
\[
C\varepsilon^{-5/\ell}(u+c)^2 \frac{|\nabla\mathcal F|^2}{\mathcal F}.
\]
Since $\varepsilon=(u+c)^{-4}$, we obtain
\begin{equation}
\label{eq:new-coarse-div} \operatorname{div}(\A\nabla\mathcal F) \ge c_*\kappa\mathcal F s^2Q -
C(u+c)^{20/\ell+2} \frac{|\nabla\mathcal F|^2}{\mathcal F}.
\end{equation}

\proofstep{Step 5: Coercivity of the positive term.}

The definition of $\mathcal F$ and the identity $s^{m-q}=\lambda e^u/((m-1)P)$ give
\begin{equation}
\label{eq:new-coercion-exact} \frac{\kappa s^2Q} {\mathcal F^{(q-m+2)/(m-q)}} =
\frac{\lambda}{m-1} e^u (u+c)^{\beta(q-m+2)/(m-q)} \frac{\kappa Q} {P\phi^{(q-m+2)/(m-q)}}.
\end{equation}
By \eqref{eqkappa} and Lemmas~\ref{lem:new-cap} and~\ref{lem:new-Q-positive},
\[
\begin{aligned}
\frac{\kappa Q} {P\phi^{(q-m+2)/(m-q)}} &\ge c_*\, \frac{(1+P)^2}{P(u+c)^2} \varepsilon^{
\frac1\ell + \frac{2(q-m+2)}{(m-q)\ell} }\\
&= c_*\, \frac{(1+P)^2}{P(u+c)^2} \varepsilon^{ \frac{q-m+4}{(m-q)\ell} }.
\end{aligned}
\]
Since $(1+P)^2/P\ge4$, \eqref{eq:new-coercion-exact} yields
\[
\begin{aligned}
\frac{\kappa s^2Q} {\mathcal F^{(q-m+2)/(m-q)}} &\ge c_* e^u (u+c)^{ \beta(q-m+2)/(m-q)-2 }
\varepsilon^{ \frac{q-m+4}{(m-q)\ell} }\\
&\ge c_* e^u (u+c)^{ -2-\frac{4(q-m+4)}{(m-q)\ell} }.
\end{aligned}
\]
In the last inequality we used $u+c\ge1$ and $\varepsilon=(u+c)^{-4}$. The function
\[
e^u (u+c)^{ -2-\frac{4(q-m+4)}{(m-q)\ell} }
\]
has a positive infimum for $u\ge0$: it is positive and continuous, and the exponential factor
dominates the fixed negative power as $u\to\infty$. Hence
\begin{equation}
\label{eq:new-positive-coercivity} \kappa\mathcal F s^2Q \ge c_0\mathcal F^{2/(m-q)}.
\end{equation}

\proofstep{Step 6: Removal of the remaining $u$-dependence.}

We estimate the gradient-error coefficient in terms of $\mathcal F$ by considering two ranges of
$P$. Choose $T\ge\max\{P_0,\sqrt2,1\}$, where $P_0$ is given by Lemma~\ref{lem:new-Q-positive}.
First suppose that
\[
P\ge T\varepsilon^{-2/\ell}.
\]
By \eqref{eq:new-chi-est},
\[
P\chi \le \frac{\varepsilon^{-2/\ell}}{P} \le T^{-1}.
\]
Moreover,
\[
\varepsilon P^\ell \ge T^\ell\varepsilon^{-1},
\]
and hence
\[
\chi = (1+\varepsilon P^\ell)^{-2/\ell} \le T^{-2}\le\frac12.
\]
Therefore
\[
\kappa\ge c_*>0.
\]
Combining this lower bound with \eqref{eq:new-logF-derivative-bounds} and the identities
preceding \eqref{eq:new-ratio-bounds}, we obtain
\[
|t| + \left| \frac{\mathcal F_{\rho\rho}}{\mathcal F_\rho}-1 \right| + \left| \frac{\mathcal F_{u\rho}}{\mathcal F_\rho} \right| \le C
\]
uniformly in this region. Since $P\ge T\ge P_0$ and $\chi\le1/2$, \eqref{eq:new-Q-strong} gives
\[
Q\ge c_*P^2.
\]
Hence the linear term in \eqref{eq:new-offcritical-div} satisfies
\[
C(1+P)s|E| \le CPs|E| \le \delta s^2Q+C|E|^2.
\]
Repeating the absorption argument of Step~4 with the same $\delta$ and using $\kappa\ge c_*>0$,
we obtain
\begin{equation}
\label{eq:new-large-P} \operatorname{div}(\A\nabla\mathcal F) \ge c_*\kappa\mathcal F s^2Q -
C\frac{|\nabla\mathcal F|^2}{\mathcal F}
\end{equation}
whenever
\[
P\ge T\varepsilon^{-2/\ell}.
\]

We now consider the complementary region
\[
P\le T\varepsilon^{-2/\ell} = T(u+c)^{8/\ell}.
\]
By the definition of $P$,
\[
s^{m-q} = \frac{\lambda e^u}{(m-1)P} \ge \frac{\lambda}{(m-1)T} e^u(u+c)^{-8/\ell}.
\]
Since $\phi\ge1$,
\[
\mathcal F = (u+c)^{-\beta}s^{m-q}\phi \ge \frac{\lambda}{(m-1)T} e^u (u+c)^{-\beta-8/\ell}.
\]
Taking logarithms gives
\[
\log\mathcal F \ge u -\left(\beta+\frac8\ell\right)\log(u+c) -C.
\]
Since logarithmic growth is dominated by linear growth, there exists $u_0>0$ such that for $u\ge
u_0$,
\[
\left(\beta+\frac8\ell\right)\log(u+c)+C \le\frac u2.
\]
Thus
\[
\log\mathcal F\ge\frac u2 \qquad(u\ge u_0).
\]
For $0\le u\le u_0$ the quantity $u+c$ is uniformly bounded. Combining the two cases gives
\begin{equation}
\label{eq:new-u-logF} u+c \le C\bigl(1+\log_+\mathcal F\bigr).
\end{equation}
Consequently,
\[
(u+c)^{20/\ell+2} \le C \bigl(1+\log_+\mathcal F\bigr)^{20/\ell+2}.
\]

Set
\[
J=\frac{20}{\ell}+2.
\]
Apply \eqref{eq:new-positive-coercivity} to \eqref{eq:new-large-P} in the first region, and to
\eqref{eq:new-coarse-div} together with \eqref{eq:new-u-logF} in the second. Decreasing $c_0>0$
if necessary and choosing $C_0>0$ large enough, we obtain
\[
\operatorname{div}(\A\nabla\mathcal F) \ge c_0\mathcal F^{2/(m-q)} - C_0 (1+\log_+\mathcal F)^J
\frac{|\nabla\mathcal F|^2}{\mathcal F}.
\]
This proves \eqref{eq:new-autonomous-divergence}.

\end{proof}

\subsection{A global vanishing argument}

\begin{lemma}[Global vanishing lemma]\label{lem:new-linear-KO}

Assume $\Ric_g\ge0$. Let $B$ be a symmetric, measurable tensor satisfying
\[
\nu I\le B\le\Lambda I, \qquad 0<\nu\le\Lambda<\infty.
\]
Suppose that
\[
w\in C(M)\cap W^{1,2}_{\mathrm{loc}}(M), \qquad w\ge0,
\]
satisfies
\[
\operatorname{div}(B\nabla w)\ge g(w)
\]
in the weak sense. Assume that $g:[0,\infty)\to[0,\infty)$ is continuous, $g(0)=0$, $g(t)>0$ for
$t>0$, and $g(t)/t$ is nondecreasing on $(0,\infty)$. If
\[
g(t)\ge c_*t(\log t)^4 \qquad(t\ge T_*)
\]
for some constants $c_*,T_*>0$, then $w\equiv0$.

\end{lemma}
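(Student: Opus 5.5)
The proof has two stages: a universal local $L^\infty$ bound of Keller--Osserman type, and then a decay argument on large balls. It uses only two ingredients, both uniform in the centre and radius of the ball because $\Ric_g\ge0$. The first is the Moser local maximum principle for nonnegative weak subsolutions of the linear equation $\operatorname{div}(B\nabla v)\ge0$. It follows from Lemma~\ref{lem:sobolev} with $K=0$ (the factor $e^{C\sqrt K\rho}$ disappears) by the standard iteration, as in Section~\ref{sec:local}, but for a uniformly elliptic linear operator:
\[
\sup_{B(x,r/2)}v\le C(n,\nu,\Lambda)\Bigl(\fint_{B(x,r)}v^2\dmu\Bigr)^{1/2}.
\]
Since $g\ge0$, $w$ is a subsolution, and so is $(w-L)_+$ for every $L\ge0$; this follows by testing with $\eta^2\,\mathbf 1_{\{w>L\}}$-type truncations. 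The second ingredient is a Caccioppoli inequality that keeps the absorption term. Testing $\operatorname{div}(B\nabla w)\ge g(w)$ with $\eta^2(w-L)_+$ gives
\[
\int\eta^2 g(w)(w-L)_++\tfrac\nu2\int\eta^2|\nabla(w-L)_+|^2\le C\int (w-L)_+^2|\nabla\eta|^2 .
\]
On $\{w>L\}$ we have $g(w)\ge \frac{g(L)}{L}\,w\ge\frac{g(L)}{L}(w-L)_+$, because $g(t)/t$ is nondecreasing. Hence
\[
\frac{g(L)}{L}\int_{B(x,r/2)}(w-L)_+^2\le Cr^{-2}\int_{B(x,r)}(w-L)_+^2 .
\]

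\textbf{Step 1 (universal bound).} I will show that $\sup_{B(x,R/2)}w\le\Phi(R)$, where $\Phi$ depends only on $n,\nu,\Lambda,c_*,T_*$ and not on $w$. Choose levels $L_k=\exp(2^{k})L_0$ and radii $r_k=\sigma\,(\log L_k)^{-2}$. Then $\sum_k r_k\le C\sigma/\log^2 L_0$ is finite; this is exactly where the exponent $4$ enters, since $\sqrt{g(L)/L}\gtrsim(\log L)^2$ and $\int^\infty dt/(t\log^2t)<\infty$. On nested balls $B_k$ with radii decreasing by $r_k$, combine the Caccioppoli estimate at level $L_k$ with the growth $g(L_k)/L_k\ge c_*(\log L_k)^4$ and with the maximum principle applied to $(w-L_{k+1})_+$. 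I aim to show that the normalized quantity $A_k=\bigl(\fint_{B_k}(w-L_k)_+^2\bigr)^{1/2}$ satisfies a recursion that forces $A_k\to0$ whenever $A_0$ is finite. Any super-exponential overshoot is excluded by a De Giorgi-type recursion $A_{k+1}\le C\,(r_k^2\log^4L_k)^{-1/2}A_k\cdot(\text{volume-ratio factor})$, and doubling controls the volume ratios. The conclusion is $w\le L_0(\sigma)$ on the limiting ball. This is the step I expect to be the main obstacle. One must organize the iteration so that no a priori bound on $w$ is used, starting from $A_0<\infty$ only because $w$ is continuous, and so that the constants never depend on the ball. If the direct De Giorgi recursion does not close, my fallback is to apply the maximum principle at each scale $r_k$ around every point: if $\sup_{B(y,r_k/2)}w>L_{k+1}$, the Caccioppoli bound forces a point at distance $\le r_k$ where $w>L_{k+2}$. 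Iterating produces a sequence converging within distance $\sum r_k<\infty$ along which $w\to\infty$, which contradicts continuity. In either form, Step 1 yields that $w$ is bounded on $M$ by a constant $A$.

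\textbf{Step 2 (vanishing).} Given $A=\sup_M w<\infty$ and any $\delta>0$, apply the displayed Caccioppoli inequality with $L=\delta$ on $B(x,R)$, and then the maximum principle to $(w-\delta)_+$ on $B(x,R/4)$, using doubling to compare averages. This gives
\[
\sup_{B(x,R/4)}w\le\delta+C\,A\,R^{-1}\Bigl(\frac{g(\delta)}{\delta}\Bigr)^{-1/2},
\]
where $g(\delta)>0$ because $\delta>0$. Fix $x$ and let $R\to\infty$; this gives $w(x)\le\delta$. Since $\delta>0$ is arbitrary, $w\equiv0$.
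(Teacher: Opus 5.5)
Your two ingredients (the mean value inequality for $(w-L)_+$ and the Caccioppoli inequality that keeps $\frac{g(L)}{L}\int\eta^2(w-L)_+^2$) are correct. Step~2 is also correct: once $\sup_Mw<\infty$ is known, letting $R\to\infty$ gives $w\le\delta$. The gap is Step~1, which carries the whole difficulty.

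The De Giorgi-type recursion gives no bound. With $r_k=\sigma(\log L_k)^{-2}$ it is the \emph{linear} recursion $A_{k+1}\le(C/\sigma)A_k$. Since $L_k\to\infty$ while $w$ is locally bounded, $A_k\to0$ holds trivially and says nothing about $\sup w$. A conclusion like $w\le L_0(\sigma)$ needs levels increasing to a finite limit and a superlinear recursion, and neither is present.

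The fallback chain is the right mechanism, but its key step fails for your doubly exponential levels. Caccioppoli plus mean value is a linear estimate. If $w(y')>L_{k+1}$ and $w\le L_{k+2}$ on $B(y',r)$, testing at level $L=L_{k+1}/2$ yields only
\[
\frac{L_{k+1}}2\le C\Bigl(r^2\frac{g(L)}{L}\Bigr)^{-1/2}L_{k+2}.
\]
A contradiction therefore requires $r\gtrsim(L_{k+2}/L_{k+1})(\log L_{k+1})^{-2}=(L_{k+1}/L_0)(\log L_{k+1})^{-2}$. This is not $r_k$, and these radii are not even bounded. The step is in fact false. For $g(t)=t(\log t)^4$, the one-dimensional solution of $W''=g(W)$ with $W'=0$ at the centre needs distance at least $(2\log L)^{-1}\gg(\log L)^{-2}$ to climb from $L$ to $L^2$. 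Note also that with $L_k=\exp(2^k)L_0$ the series $\sum_kr_k$ would converge for any positive power of $\log L_k$. So the exponent $4$ is not entering where you say it does.

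The missing idea is to use bounded jumps $M\mapsto2M$. Suppose $w(x)=M>0$ and $\sup_{B(x,2r)}w\le2M$. Your Caccioppoli inequality at level $M/2$, combined with the mean value inequality for $(w-M/2)_+$, gives $r^2g(M/2)/M\le C$. The paper obtains the same bound through a lower bound on $\mu(\{w\ge\tau M\}\cap B(x,r))$.

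Set $R(M)=(M/g(M/2))^{1/2}$, which is nonincreasing in $M$, and take $K_0>\sqrt C$. Then every point with $w=M>0$ has a point with $w>2M$ within distance $2K_0R(M)$. The growth hypothesis gives $R(2^jM_0)\lesssim j^{-2}$ for large $j$, so $\sum_jR(2^jM_0)<\infty$. This is exactly where the fourth power enters; it is the discrete form of your criterion $\int^\infty dt/(t\log^2t)<\infty$. The resulting chain $(x_j)$ is Cauchy and converges by completeness, while $w(x_j)\ge2^jM_0\to\infty$ contradicts continuity.

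Because $g(t)>0$ for every $t>0$, $R(M)<\infty$ for all $M>0$, so the chain can start at any point where $w>0$. This gives $w\equiv0$ directly, which is the paper's proof. Your Step~2, although valid, then becomes unnecessary.
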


\begin{proof}

The constants in the local estimates below depend only on $n,\nu,\Lambda$. Since $w$ is
continuous, it is bounded on compact sets; the compactly supported Sobolev test functions used
below are therefore admissible by approximation.

\proofstep{Step 1: A local mean value estimate.}

Since $g(w)\ge0$, we have
\[
-\int_M\langle B\nabla w,\nabla\varphi\rangle\,d\mu\ge0
\]
for every nonnegative, compactly supported $\varphi\in W^{1,2}_0(M)$. For $p\ge2$, $\delta>0$,
and a nonnegative $\eta\in C_0^\infty(M)$, take $\varphi=\eta^2(w+\delta)^{p-1}$. Ellipticity and
Young's inequality give
\[
\begin{aligned}
\nu(p-1)\int\eta^2(w+\delta)^{p-2}|\nabla w|^2\,d\mu &\le2\Lambda\int\eta(w+\delta)^{p-1}
|\nabla w||\nabla\eta|\,d\mu\\
&\le\frac{\nu(p-1)}2\int\eta^2(w+\delta)^{p-2}|\nabla w|^2\,d\mu\\
&\quad+\frac{2\Lambda^2}{\nu(p-1)} \int(w+\delta)^p|\nabla\eta|^2\,d\mu.
\end{aligned}
\]
Consequently,
\[
\int\eta^2(w+\delta)^{p-2}|\nabla w|^2\,d\mu
\le\frac{C}{(p-1)^2}\int(w+\delta)^p|\nabla\eta|^2\,d\mu.
\]
Expanding $\nabla(\eta(w+\delta)^{p/2})$ and letting $\delta\downarrow0$, we obtain
\begin{equation}
\label{eq:new-KO-caccioppoli} \int\left|\nabla(\eta w^{p/2})\right|^2\,d\mu \le Cp^2\int
w^p|\nabla\eta|^2\,d\mu.
\end{equation}

Fix $x\in M$ and $r>0$, and set $\mathcal V=\mu(B_r(x))$. Apply Lemma~\ref{lem:sobolev} with
scale $r/2$ and $K=0$. Its exponent is
\[
\chi=
\begin{cases}
n/(n-2),&n>2,\\
2,&n=2.
\end{cases}
\]
For $r/2\le\rho'<\rho\le r$, choose $\eta\in C_0^\infty(B_\rho(x))$ with $0\le\eta\le1$, $\eta=1$ on
$B_{\rho'}(x)$, and $|\nabla\eta|\le2/(\rho-\rho')$. Applying the Sobolev inequality, extended
by density to $\eta w^{p/2}\in W^{1,2}_0(B_r(x))$, and then \eqref{eq:new-KO-caccioppoli}, gives
\[
\begin{aligned}
\left(\frac1{\mathcal V}\int_{B_{\rho'}(x)}w^{p\chi}\,d\mu\right)^{1/\chi}
&\le\frac{Cr^2}{\mathcal V} \int\left|\nabla(\eta w^{p/2})\right|^2\,d\mu +\frac C{\mathcal V}\int_{B_\rho(x)}w^p\,d\mu\\
&\le\frac{Cp^2r^2}{(\rho-\rho')^2} \frac1{\mathcal V}\int_{B_\rho(x)}w^p\,d\mu.
\end{aligned}
\]
Thus
\begin{equation}
\label{eq:new-KO-moser-step}
\begin{aligned}
&\left(\frac1{\mathcal V}\int_{B_{\rho'}(x)}w^{p\chi}\,d\mu\right)^{1/(p\chi)}\\
&\qquad\le\left(\frac{Cp^2r^2}{(\rho-\rho')^2}\right)^{1/p} \left(\frac1{\mathcal V}\int_{B_\rho(x)}w^p\,d\mu\right)^{1/p}.
\end{aligned}
\end{equation}
Choose
\[
p_j=2\chi^j, \qquad \rho_j=\frac r2+\frac r{2^{j+1}}, \qquad j\ge0.
\]
Then $p_{j+1}=\chi p_j$, $\rho_0=r$, and $\rho_j-\rho_{j+1}=r/2^{j+2}$. Iterating
\eqref{eq:new-KO-moser-step} with these exponents and radii introduces at most the factor
\[
\prod_{j=0}^\infty(Cp_j^2 4^{j+2})^{1/p_j}\le C,
\]
which is finite because $\sum_{j\ge0}(1+j)\chi^{-j}<\infty$. Letting $j\to\infty$ yields
\[
\sup_{B_{r/2}(x)}w \le C\left(\frac1{\mu(B_r(x))}\int_{B_r(x)}w^2\,d\mu\right)^{1/2}.
\]
In particular,
\begin{equation}
\label{eq:new-linear-mean} w(x)^2\le\frac C{\mu(B_r(x))}\int_{B_r(x)}w^2\,d\mu.
\end{equation}

\proofstep{Step 2: A positive portion of the ball is a high-level set.}

Suppose that
\[
w(x)=M>0, \qquad \sup_{B_{2r}(x)}w\le2M.
\]
Let $C_{\mathrm{mv}}\ge2$ be a constant for which \eqref{eq:new-linear-mean} holds. For
$0<\tau<1$, define
\[
A=\{y\in B_r(x):w(y)\ge\tau M\}.
\]
The mean value estimate and the bound $w\le2M$ give
\[
\begin{aligned}
C_{\mathrm{mv}}^{-1}M^2\mu(B_r(x)) &\le\int_{B_r(x)}w^2\,d\mu\\
&\le\tau^2M^2\mu(B_r(x))+4M^2\mu(A).
\end{aligned}
\]
Choose $\tau=(2C_{\mathrm{mv}})^{-1/2}$ and $\sigma=(8C_{\mathrm{mv}})^{-1}$. Then
\begin{equation}
\label{eq:new-KO-level-measure} \mu\{y\in B_r(x):w(y)\ge\tau M\} \ge\sigma\mu(B_r(x)).
\end{equation}
Both $\tau$ and $\sigma$ depend only on $n,\nu,\Lambda$.

\proofstep{Step 3: The radius estimate.}

Under the assumptions of Step~2, choose $\eta\in C_0^\infty(B_{2r}(x))$ with $0\le\eta\le1$, $\eta=1$
on $B_r(x)$, and $|\nabla\eta|\le C/r$. Testing $\operatorname{div}(B\nabla w)\ge g(w)$ with
$\eta^2w$ gives
\[
\begin{aligned}
\int g(w)\eta^2w\,d\mu +\nu\int\eta^2|\nabla w|^2\,d\mu &\le2\Lambda\int\eta w|\nabla
w||\nabla\eta|\,d\mu\\
&\le\frac\nu2\int\eta^2|\nabla w|^2\,d\mu +\frac{2\Lambda^2}{\nu}\int w^2|\nabla\eta|^2\,d\mu.
\end{aligned}
\]
Therefore
\begin{equation}
\label{eq:new-KO-energy} \int g(w)\eta^2w\,d\mu +\frac\nu2\int\eta^2|\nabla w|^2\,d\mu \le\frac
C{r^2}\int_{B_{2r}(x)}w^2\,d\mu.
\end{equation}
The assumptions on $g(t)/t$ and $g\ge0$ imply that $g$ is nondecreasing. On the set $A$ from
Step~2, $g(w)w\ge\tau M g(\tau M)$. Using \eqref{eq:new-KO-level-measure}, we obtain
\[
\int g(w)\eta^2w\,d\mu \ge\tau\sigma M g(\tau M)\mu(B_r(x)).
\]
On the other hand, $w\le2M$ on $B_{2r}(x)$, and Bishop--Gromov comparison gives
$\mu(B_{2r}(x))\le2^n\mu(B_r(x))$. Hence \eqref{eq:new-KO-energy} implies
\[
\tau\sigma M g(\tau M)\mu(B_r(x)) \le\frac{CM^2}{r^2}\mu(B_r(x)).
\]
Cancelling the positive factors yields
\begin{equation}
\label{eq:new-KO-ball} r^2\frac{g(\tau M)}M\le C.
\end{equation}

\proofstep{Step 4: The doubling scale.}

For $M>0$, define
\[
R(M)=\left(\frac M{g(\tau M)}\right)^{1/2}.
\]
Since $g(t)/t$ is nondecreasing,
\[
R(M_2)\le R(M_1)\qquad(M_2\ge M_1>0).
\]
Fix $M>0$. For all sufficiently large $j$, the growth assumption on $g$ gives
\[
g(\tau2^jM)\ge c_*\tau2^jM\bigl(\log(\tau2^jM)\bigr)^4,
\]
and therefore
\[
R(2^jM) \le\frac1{\sqrt{c_*\tau}\,[j\log2+\log(\tau M)]^2} \le\frac{C_M}{(1+j)^2}.
\]
Thus the fourth power of the logarithm produces a summable tail:
\begin{equation}
\label{eq:new-KO-series} \sum_{j=0}^\infty R(2^jM)<\infty.
\end{equation}
The finitely many initial terms are finite because $g(t)>0$ for $t>0$.

\proofstep{Step 5: The doubling contradiction.}

Let $C_1$ be the constant in \eqref{eq:new-KO-ball}, and fix $K_0>\sqrt{C_1}$. Suppose that
$M_0:=w(x_0)>0$ at some $x_0\in M$. Set $r_0=K_0R(M_0)$. If $\sup_{B_{2r_0}(x_0)}w\le2M_0$, then
\eqref{eq:new-KO-ball} gives
\[
K_0^2=r_0^2\frac{g(\tau M_0)}{M_0}\le C_1,
\]
a contradiction. Hence there exists $x_1\in B_{2r_0}(x_0)$ with $M_1:=w(x_1)>2M_0$.

Repeating the argument gives a sequence satisfying
\[
M_j:=w(x_j), \qquad M_{j+1}>2M_j, \qquad d(x_j,x_{j+1})<2K_0R(M_j).
\]
By induction and the monotonicity of $R$,
\[
M_j\ge2^jM_0, \qquad d(x_j,x_{j+1})\le2K_0R(2^jM_0) \qquad(j\ge0).
\]
For $k>j$, it follows that
\[
d(x_j,x_k)\le2K_0\sum_{i=j}^{k-1}R(2^iM_0) \le2K_0\sum_{i=j}^\infty R(2^iM_0).
\]
By \eqref{eq:new-KO-series}, the last expression tends to zero as $j\to\infty$. Thus $(x_j)$ is
Cauchy and, by completeness, converges to a point $x_\infty\in M$. Continuity gives $w(x_j)\to
w(x_\infty)<\infty$, whereas $w(x_j)=M_j\ge2^jM_0\to\infty$. This contradiction proves
$w\equiv0$.

\end{proof}

\begin{proof}[Proof of Theorem~\ref{thm:liouville} for $m-1<q<m$]

The parameter $c$ remains fixed as in Lemma~\ref{lem:new-Q-positive}. Here $c_*$ and $C$ denote
positive constants depending only on $n,m,q,\lambda$, whose values may change from line to line.

\proofstep{Step 1: Removing the gradient-square error.}

Let $\mathcal F$ be the auxiliary function in Lemma~\ref{lem:new-divergence}, and set
$\nu=\min\{1,m-1\}$. Fix $b$, depending only on $n,m,q,\lambda$, such that
\begin{equation}
\label{chooseb} b>\frac{C_0}{\nu}.
\end{equation}
For $t>0$, define
\begin{equation}
\label{eq:new-Psi} \Psi'(t)=\exp\left(b\int_1^t \frac{(1+\log_+s)^J}{s}\,ds\right), \qquad
\Psi(t)=\int_0^t\Psi'(s)\,ds.
\end{equation}
Then
\begin{equation}
\label{Psi''} \Psi''(t)=b\frac{(1+\log_+t)^J}{t}\Psi'(t)>0.
\end{equation}
For $0<t\le1$, we have $\Psi'(t)=t^b$ and $\Psi(t)=t^{b+1}/(b+1)$. Thus $\Psi$ extends
continuously to $\Psi(0)=0$, is $C^2$ on $(0,\infty)$, and is strictly increasing and convex.
Since $\Psi'(t)\ge1$ for $t\ge1$, it maps $[0,\infty)$ onto $[0,\infty)$.

Using \eqref{eq:new-autonomous-divergence}, \eqref{Psi''}, and $\A\ge\nu I$, we obtain on
$\{\mathcal F>0\}$
\[
\begin{aligned}
\operatorname{div}(\A\nabla\Psi(\mathcal F))
&=\Psi'(\mathcal F)\operatorname{div}(\A\nabla\mathcal F)\\
&\quad+\Psi''(\mathcal F)\langle\A\nabla\mathcal F,\nabla\mathcal F\rangle\\
&\ge c_0\Psi'(\mathcal F)\mathcal F^{2/(m-q)}\\
&\quad+(\nu b-C_0)\Psi'(\mathcal F)(1+\log_+\mathcal F)^J \frac{|\nabla\mathcal F|^2}{\mathcal F}.
\end{aligned}
\]
By \eqref{chooseb}, the last term is nonnegative. Hence
\begin{equation}
\label{eq:new-Psi-div} \operatorname{div}(\A\nabla\Psi(\mathcal F)) \ge c_0\Psi'(\mathcal F)\mathcal F^{2/(m-q)} \qquad(\mathcal F>0).
\end{equation}

\proofstep{Step 2: Rewriting the source as a function of $W=\Psi(\mathcal F)$.}

Set $W=\Psi(\mathcal F)$. Since $\Psi'$ is increasing, $W\le\mathcal F\Psi'(\mathcal F)$, or
equivalently,
\begin{equation}
\label{Psi'} \Psi'(\mathcal F)\ge\frac W{\mathcal F}.
\end{equation}
It follows that
\[
c_0\Psi'(\mathcal F)\mathcal F^{2/(m-q)} \ge c_0W\mathcal F^{(q-m+2)/(m-q)}.
\]
If $0<\mathcal F\le1$, then $\mathcal F=((b+1)W)^{1/(b+1)}$. Therefore, for
$0<W\le1/(b+1)$,
\begin{equation}
\label{eq:new-small-W} c_0\Psi'(\mathcal F)\mathcal F^{2/(m-q)} \ge
c_*W^{1+\frac{q-m+2}{(m-q)(b+1)}}.
\end{equation}

For $\mathcal F\ge1$, the definition of $\Psi'$ gives
\[
\log\Psi'(\mathcal F) =\frac b{J+1}\left[(1+\log\mathcal F)^{J+1}-1\right].
\]
Consequently, \eqref{Psi'} implies
\[
\begin{aligned}
\log\Psi(\mathcal F) &\le\log\mathcal F +\frac b{J+1}\left[(1+\log\mathcal F)^{J+1}-1\right]\\
&\le C(1+\log\mathcal F)^{J+1}.
\end{aligned}
\]
For sufficiently large $\mathcal F$, we also have $\Psi(\mathcal F)\ge1$. In this range,
\[
(1+\log\Psi(\mathcal F))^4 \le C(1+\log\mathcal F)^{4(J+1)}.
\]
Since $(q-m+2)/(m-q)>0$,
\[
\frac{\mathcal F^{(q-m+2)/(m-q)}} {(1+\log\mathcal F)^{4(J+1)}}\longrightarrow\infty
\qquad(\mathcal F\to\infty).
\]
Thus, for all sufficiently large $W=\Psi(\mathcal F)$,
\begin{equation}
\label{eq:new-large-W} c_0\Psi'(\mathcal F)\mathcal F^{2/(m-q)} \ge c_*W(1+\log W)^4.
\end{equation}

The remaining values of $W$ lie in a compact subinterval of $(0,\infty)$. Since $\Psi$ is
strictly increasing and the source in \eqref{eq:new-Psi-div} is positive and continuous there,
\eqref{eq:new-small-W} and \eqref{eq:new-large-W} allow us to choose $c_1>0$ such that
\[
g(W)=c_1
\begin{cases}
W^{1+\frac{q-m+2}{(m-q)(b+1)}},&0\le W\le1,\\
W(1+\log W)^4,&W\ge1
\end{cases}
\]
satisfies
\[
c_0\Psi'(\mathcal F)\mathcal F^{2/(m-q)}\ge g(\Psi(\mathcal F)) \qquad(\mathcal F>0).
\]
The two branches of $g$ agree at $W=1$. Moreover, $g(0)=0$, $g(W)>0$ for $W>0$, and $g(W)/W$ is
nondecreasing on $(0,\infty)$. These properties and the large-$W$ growth show that $g$
satisfies all the assumptions of Lemma~\ref{lem:new-linear-KO}.
By \eqref{eq:new-Psi-div},
\begin{equation}
\label{eq:new-Psi-g} \operatorname{div}(\A\nabla\Psi(\mathcal F)) \ge g(\Psi(\mathcal F))
\qquad\text{on }\{\mathcal F>0\}.
\end{equation}

\proofstep{Step 3: Extension across the critical set.}

Fix $\delta>0$ and define
\[
W_\delta=(\Psi(\mathcal F)-\Psi(\delta))_+.
\]
Since $\mathcal F$ is continuous and vanishes on $\{|\nabla u|=0\}$,
\[
\{\mathcal F\ge\delta\}\subset\{|\nabla u|>0\}.
\]
For every compact $K\subset M$, the set $K\cap\{\mathcal F\ge\delta\}$ is compactly contained in
the noncritical set. It therefore has a neighborhood on which $u,\mathcal F$, and $\A$ are
smooth. On this neighborhood, $W_\delta$ is the positive part of a $C^2$ function; near every
point with $\mathcal F<\delta$, it is identically zero. Hence $W_\delta$ is locally Lipschitz
and, in particular,
\begin{equation}
\label{Wdelta} W_\delta\in C(M)\cap W^{1,2}_{\mathrm{loc}}(M).
\end{equation}
Its weak gradient is
\[
\nabla W_\delta =\mathbf1_{\{\mathcal F>\delta\}}\Psi'(\mathcal F)\nabla\mathcal F
\qquad\text{a.e.},
\]
with the right-hand side taken to be zero outside $\{\mathcal F>\delta\}$. Extend $\A$ by
setting $\A=I$ on the critical set. The resulting tensor is measurable and symmetric, and
satisfies
\[
\nu I\le\A\le\max\{1,m-1\}I \qquad\text{on }M.
\]

Choose smooth nondecreasing functions $\theta_\eta:\mathbb R\to[0,1]$ with
\[
\theta_\eta(t)=0\quad(t\le0), \qquad\theta_\eta(t)=1\quad(t\ge\eta), \qquad\theta_\eta'(t)\ge0.
\]
For a nonnegative $\zeta\in C_0^\infty(M)$, the function $\zeta\theta_\eta(W-\Psi(\delta))$ has
compact support in the noncritical set. It is an admissible test function in
\eqref{eq:new-Psi-g} by smooth approximation within that set, where $W=\Psi(\mathcal F)$ is
$C^2$. Expanding its derivative gives
\[
\begin{aligned}
&-\int\theta_\eta(W-\Psi(\delta)) \langle\A\nabla W,\nabla\zeta\rangle\,d\mu\\
&\quad-\int\zeta\theta_\eta'(W-\Psi(\delta)) \langle\A\nabla W,\nabla W\rangle\,d\mu\\
&\qquad\ge\int g(W)\zeta\theta_\eta(W-\Psi(\delta))\,d\mu.
\end{aligned}
\]
Since $\theta_\eta'\ge0$ and $\A$ is positive definite, the second term on the left is
nonpositive. Dropping it yields
\[
-\int\theta_\eta(W-\Psi(\delta)) \langle\A\nabla W,\nabla\zeta\rangle\,d\mu \ge\int
g(W)\zeta\theta_\eta(W-\Psi(\delta))\,d\mu.
\]
All integrands are supported in a fixed compact subset of the noncritical set. Letting
$\eta\downarrow0$ and using dominated convergence together with the gradient formula for
$W_\delta$, we obtain
\[
-\int\langle\A\nabla W_\delta,\nabla\zeta\rangle\,d\mu \ge\int_{\{\mathcal F>\delta\}}g(W)\zeta\,d\mu.
\]
On $\{\mathcal F>\delta\}$, we have $0\le W_\delta\le W$, so $g(W_\delta)\le g(W)$ by
monotonicity. On its complement, $g(W_\delta)=g(0)=0$. Therefore
\[
-\int\langle\A\nabla W_\delta,\nabla\zeta\rangle\,d\mu \ge\int g(W_\delta)\zeta\,d\mu.
\]
Equivalently,
\begin{equation}
\label{eq:new-Wdelta} \operatorname{div}(\A\nabla W_\delta)\ge g(W_\delta) \qquad\text{weakly on
}M.
\end{equation}

By \eqref{Wdelta}, the ellipticity of $\A$, and the properties of $g$,
Lemma~\ref{lem:new-linear-KO} applies to \eqref{eq:new-Wdelta} and gives $W_\delta\equiv0$. Thus
$\Psi(\mathcal F)\le\Psi(\delta)$ and, since $\Psi$ is strictly increasing, $\mathcal F\le\delta$. Letting $\delta\downarrow0$ yields $\mathcal F\equiv0$. By
\eqref{eq:new-auxiliary}, $\mathcal F>0$ wherever $|\nabla u|>0$, so $|\nabla u|\equiv0$. Since
$M$ is connected, $u$ is constant.

\end{proof}
\section{The cases \texorpdfstring{$q\le0$, $q=m$, and $0<q\le m-1$}{q <= 0, q = m, and 0 < q <= m-1}}\label{sec:endpoints}

\begin{proof}[Proof of Theorem~\ref{thm:liouville} for $q\le0$, $q=m$, and $0<q\le m-1$]

Let $u>0$ be a $C^1$ weak solution of~\eqref{eq:main}.
Fix $o\in M$ and choose $\eta_R\in C_0^\infty(M)$ such that
\[
 0\le\eta_R\le1,\qquad
 \eta_R=1\ \text{on }B(o,R),\qquad
 \supp\eta_R\subset B(o,2R),\qquad
 |\nabla\eta_R|\le C/R.
\]
Choose an integer $N>m$.
The composite test functions used below are admissible by smooth approximation
both uniformly and in $W^{1,m}$, since the right-hand side of~\eqref{eq:main} is locally integrable.

\proofstep{Case 1: $q\le0$.}

When $q<0$, the local integrability of $e^u|\nabla u|^q$ implies that
$|\nabla u|>0$ almost everywhere.
For all $q\le0$, testing~\eqref{eq:main} with $e^{-\frac{m}{m-q}u}\eta_R^N$ yields
\[
 \begin{aligned}
 &\lambda\int e^{-\frac{q}{m-q}u}|\nabla u|^q\eta_R^N\dmu
 +\frac{m}{m-q}\int e^{-\frac{m}{m-q}u}|\nabla u|^m\eta_R^N\dmu\\
 &\quad=N\int e^{-\frac{m}{m-q}u}|\nabla u|^{m-2}
 \eta_R^{N-1}\langle\nabla u,\nabla\eta_R\rangle\dmu\\
 &\quad\le N\int e^{-\frac{m}{m-q}u}|\nabla u|^{m-1}
 \eta_R^{N-1}|\nabla\eta_R|\dmu.
 \end{aligned}
\]
Applying Young's inequality, using $N\ge m$ and $0\le\eta_R\le1$, and absorbing half of the gradient term gives
\[
 \begin{aligned}
 &\int\left[
 \lambda\left(e^{-\frac{u}{m-q}}|\nabla u|\right)^q
 +\frac{m}{2(m-q)}\left(e^{-\frac{u}{m-q}}|\nabla u|\right)^m
 \right]\eta_R^N\dmu\\
 &\qquad\le C\int e^{-\frac{m}{m-q}u}|\nabla\eta_R|^m\dmu
 \le CR^{-m}\mu(B(o,2R)).
 \end{aligned}
\]
For $q<0$, set
\[
c:=\min_{t>0}\left\{\lambda t^q+\frac{m}{2(m-q)}t^m\right\}>0.
\]
The expression is positive and tends to $+\infty$ as $t\downarrow0$ and $t\to\infty$, so its minimum is positive.
For $q=0$, the first term in the bracket equals $\lambda$, so we take $c=\lambda$.
In both cases, $c=c(m,q,\lambda)>0$ is independent of $R$.
Since $\eta_R=1$ on $B(o,R)$, Bishop--Gromov comparison gives
\[
c\,\mu(B(o,R))
\le C R^{-m}\mu(B(o,2R))
\le C2^nR^{-m}\mu(B(o,R)).
\]
Hence
\[
0<c\le C2^nR^{-m}\to0
\qquad (R\to\infty),
\]
a contradiction.

\proofstep{Case 2: $q=m$.}

Set
\[
v=\int_0^u\exp\!\left(\frac{\lambda e^t}{m-1}\right)\dd t.
\]
Then
\[
|\nabla v|^{m-2}\nabla v
=e^{\lambda e^u}|\nabla u|^{m-2}\nabla u,
\]
and, in the weak sense,
\[
\Delta_{m,g}v
=e^{\lambda e^u}
\left(\Delta_{m,g}u+\lambda e^u|\nabla u|^m\right)=0.
\]
Thus $v$ is a positive $m$-harmonic function. By the positive
$m$-harmonic Liouville theorem~\cite{WZ11}, $v$ is constant.
The defining change of variables is strictly increasing, so $u$ is constant as well.

\proofstep{Case 3: $0<q\le m-1$.}

Set $v=\log u$.
For $0\le\zeta\in C_0^\infty(M)$, use $u^{1-m}\zeta$ in the original equation.
This test is admissible because $u$ has a positive lower bound on $\supp\zeta$.
The identity $\nabla u=u\nabla v$ then gives
\begin{equation}\label{eq:low-log}
 -\Delta_{m,g}v
 =(m-1)|\nabla v|^m
 +\lambda e^u u^{q-m+1}|\nabla v|^q.
\end{equation}
Since $q\le m-1$, we have
\[
c_0:=\inf_{t>0}\lambda e^t t^{\,q-m+1}>0.
\]
Thus~\eqref{eq:low-log} gives
\[
-\Delta_{m,g}v
\ge
(m-1)|\nabla v|^m+c_0|\nabla v|^q.
\]

For $|\nabla v|\ge1$, the first power bounds $|\nabla v|^{m-1}$ from below;
for $|\nabla v|\le1$, the second power does so because $q\le m-1$.
Consequently, with
\[
c=\min\{m-1,c_0\}>0,
\]
we obtain
\begin{equation}\label{eq:low-pair}
-\Delta_{m,g}v\ge(m-1)|\nabla v|^m,
\qquad
-\Delta_{m,g}v\ge c|\nabla v|^{m-1}.
\end{equation}

Testing the first inequality with $\eta_R^N$ gives
\[
 (m-1)\int\eta_R^N|\nabla v|^m\dmu
 \le N\int\eta_R^{N-1}|\nabla v|^{m-1}|\nabla\eta_R|\dmu.
\]
Since $N\ge m$, Young's inequality and the Bishop--Gromov volume upper bound imply
\[
 \int_{B(o,R)}|\nabla v|^m\dmu
 \le CR^{-m}\mu(B(o,2R))\le CR^{n-m}.
\]
By H\"older's inequality,
\begin{equation}\label{eq:low-growth}
 \begin{split}
 \int_{B(o,R)}|\nabla v|^{m-1}\dmu
 &\le\left(\int_{B(o,R)}|\nabla v|^m\dmu\right)^{(m-1)/m}
       \mu(B(o,R))^{1/m}\\
 &\le CR^{n-m+1}.
 \end{split}
\end{equation}
Testing the second inequality in~\eqref{eq:low-pair} with $\eta_R$, we obtain
\[
 \int_{B(o,R)}|\nabla v|^{m-1}\dmu
 \le\frac C R\int_{B(o,2R)}|\nabla v|^{m-1}\dmu.
\]
Here $C$ may depend on $\lambda$ through $c$, but remains independent of $R$.
Apply this estimate successively at the radii $R,2R,\ldots,2^{k-1}R$, where $k$ is a fixed positive integer.
Using~\eqref{eq:low-growth} at radius $2^kR$ yields
\[
 \begin{split}
 \int_{B(o,R)}|\nabla v|^{m-1}\dmu
 &\le C^k2^{-k(k-1)/2}R^{-k}
       \int_{B(o,2^kR)}|\nabla v|^{m-1}\dmu\\
 &\le C_{k}\,R^{n-m+1-k}.
 \end{split}
\]
Choose $k>n-m+1$ and let $R\to\infty$.
Every fixed ball $B(o,r)$ is contained in $B(o,R)$ for all sufficiently large $R$,
so the integral over $B(o,r)$ vanishes.
Continuity gives $\nabla v=0$ everywhere.
Therefore $u$ is constant.
\end{proof}


\begin{thebibliography}{99}

\bibitem{BGV14}
M.-F.~Bidaut-V\'eron, M.~Garc\'ia-Huidobro and L.~V\'eron.
Local and global properties of solutions of quasilinear Hamilton--Jacobi equations.
\emph{J. Funct. Anal.} \textbf{267} (2014), 3294--3331.
\href{https://doi.org/10.1016/j.jfa.2014.07.003}{\nolinkurl{doi:10.1016/j.jfa.2014.07.003}}.

\bibitem{BGV19}
M.-F.~Bidaut-V\'eron, M.~Garc\'ia-Huidobro and L.~V\'eron.
Estimates of solutions of elliptic equations with a source reaction term involving the product of the function and its gradient.
\emph{Duke Math. J.} \textbf{168} (2019), 1487--1537.
\href{https://doi.org/10.1215/00127094-2018-0067}{\nolinkurl{doi:10.1215/00127094-2018-0067}}.

\bibitem{BV21}
M.-F.~Bidaut-V\'eron.
Liouville results and asymptotics of solutions of a quasilinear elliptic equation with supercritical source gradient term.
\emph{Adv. Nonlinear Stud.} \textbf{21} (2021), 57--76.
\href{https://doi.org/10.1515/ans-2020-2109}{\nolinkurl{doi:10.1515/ans-2020-2109}}.

\bibitem{BGV26exp}
M.-F.~Bidaut-V\'eron, M.~Garc\'ia-Huidobro and L.~V\'eron.
Local and global properties of solutions of an elliptic equation involving exponential and gradient reaction.
Preprint, \href{https://arxiv.org/abs/2606.06206}{arXiv:2606.06206} (2026).

\bibitem{BM}
A.~Bj\"orn and N.~Marola.
Moser iteration for (quasi)minimizers on metric spaces.
\emph{Manuscripta Math.} \textbf{121} (2006), 339--366.
\href{https://doi.org/10.1007/s00229-006-0040-8}{\nolinkurl{doi:10.1007/s00229-006-0040-8}}.

\bibitem{Buser}
P.~Buser.
A note on the isoperimetric constant.
\emph{Ann. Sci. \'Ecole Norm. Sup.} (4) \textbf{15} (1982), 213--230.
\href{https://doi.org/10.24033/asens.1426}{\nolinkurl{doi:10.24033/asens.1426}}.

\bibitem{CHZ22}
C.~Chang, B.~Hu and Z.~Zhang.
Liouville-type theorems and existence of solutions for quasilinear elliptic equations with nonlinear gradient terms.
\emph{Nonlinear Anal.} \textbf{220} (2022), Paper No.~112873.
\href{https://doi.org/10.1016/j.na.2022.112873}{\nolinkurl{doi:10.1016/j.na.2022.112873}}.

\bibitem{DiBenedetto83}
E.~DiBenedetto.
$C^{1+\alpha}$ local regularity of weak solutions of degenerate elliptic equations.
\emph{Nonlinear Anal.} \textbf{7} (1983), no.~8, 827--850.
\href{https://doi.org/10.1016/0362-546X(83)90061-5}{\nolinkurl{doi:10.1016/0362-546X(83)90061-5}}.

\bibitem{DSWZ25}
J.~Dou, B.~Shi, T.~Wu and H.~Zhu.
Classification of positive solutions to a class of Laplace equations with a gradient term.
\emph{J. Eur. Math. Soc. (JEMS)}, to appear.
Preprint, \href{https://arxiv.org/abs/2511.20205}{arXiv:2511.20205} (2025).

\bibitem{Fil09}
R.~Filippucci.
Nonexistence of positive weak solutions of elliptic inequalities.
\emph{Nonlinear Anal.} \textbf{70} (2009), 2903--2916.
\href{https://doi.org/10.1016/j.na.2008.12.018}{\nolinkurl{doi:10.1016/j.na.2008.12.018}}.

\bibitem{FPS20}
R.~Filippucci, P.~Pucci and P.~Souplet.
A Liouville-type theorem for an elliptic equation with superquadratic growth in the gradient.
\emph{Adv. Nonlinear Stud.} \textbf{20} (2020), 245--251.
\href{https://doi.org/10.1515/ans-2019-2070}{\nolinkurl{doi:10.1515/ans-2019-2070}}.

\bibitem{GT}
D.~Gilbarg and N.~S.~Trudinger.
\emph{Elliptic Partial Differential Equations of Second Order}.
2nd ed., Classics in Mathematics, Springer, Berlin, 2001.
\href{https://doi.org/10.1007/978-3-642-61798-0}{\nolinkurl{doi:10.1007/978-3-642-61798-0}}.

\bibitem{Goffi23}
A.~Goffi.
H\"older regularity and Liouville properties for nonlinear elliptic inequalities with power-growth gradient terms.
\emph{Proc. Roy. Soc. Edinburgh Sect. A} \textbf{153} (2023), 1833--1857.
\href{https://doi.org/10.1017/prm.2022.72}{\nolinkurl{doi:10.1017/prm.2022.72}}.

\bibitem{HanHeWang25}
D.~Han, J.~He and Y.~Wang.
Universal gradient estimates for quasilinear Hamilton--Jacobi type equation on manifolds and Liouville theorems.
\emph{J. Differential Equations} \textbf{449} (2025), Paper No.~113724.
\href{https://doi.org/10.1016/j.jde.2025.113724}{\nolinkurl{doi:10.1016/j.jde.2025.113724}}.

\bibitem{HeHuWang26}
J.~He, J.~Hu and Y.~Wang.
Nash--Moser iteration approach to the logarithmic gradient estimates and Liouville properties of quasilinear elliptic equations on manifolds.
\emph{Math. Z.} \textbf{313} (2026), Paper No.~6.
\href{https://doi.org/10.1007/s00209-026-04030-7}{\nolinkurl{doi:10.1007/s00209-026-04030-7}}.

\bibitem{LiangZhang26}
W.~Liang and Z.~Zhang.
Liouville-type theorems and universal estimates for semilinear elliptic equations involving the product of the function and its gradient.
Preprint, \href{https://arxiv.org/abs/2609.17149}{arXiv:2609.17149} (2026).

\bibitem{Lions85}
P.-L.~Lions.
Quelques remarques sur les probl\`emes elliptiques quasilin\'eaires du second ordre.
\emph{J. Anal. Math.} \textbf{45} (1985), 234--254.
\href{https://doi.org/10.1007/BF02792551}{\nolinkurl{doi:10.1007/BF02792551}}.

\bibitem{Lu26}
Z.~Lu.
Global and local properties of solutions of elliptic equations with a nonlinear term involving the product of the function and its gradient.
Preprint, \href{https://arxiv.org/abs/2602.19999}{arXiv:2602.19999} (2026).

\bibitem{LZ}
Z.~Lu and L.~Zhu.
Liouville theorems and gradient estimates for a class of quasilinear elliptic equations with product gradient structures.
\emph{J. Funct. Anal.} \textbf{291} (2026), Paper No.~111620.
\href{https://doi.org/10.1016/j.jfa.2026.111620}{\nolinkurl{doi:10.1016/j.jfa.2026.111620}}.

\bibitem{MaWu26}
X.-N.~Ma and W.~Wu.
Liouville theorem for elliptic equations with a source reaction term involving the product of the function and its gradient in $\mathbb R^n$.
\emph{Bull. Sci. Math.} \textbf{206} (2026), Paper No.~103747.
\href{https://doi.org/10.1016/j.bulsci.2025.103747}{\nolinkurl{doi:10.1016/j.bulsci.2025.103747}}.

\bibitem{MP99}
E.~Mitidieri and S.~I.~Pokhozhaev.
Nonexistence of positive solutions for quasilinear elliptic problems on $\mathbb R^N$.
\emph{Proc. Steklov Inst. Math.} \textbf{227} (1999), 186--216.
\url{https://www.mathnet.ru/eng/tm557}.

\bibitem{SC}
L.~Saloff-Coste.
Uniformly elliptic operators on Riemannian manifolds.
\emph{J. Differential Geom.} \textbf{36} (1992), 417--450.
\href{https://doi.org/10.4310/jdg/1214448748}{\nolinkurl{doi:10.4310/jdg/1214448748}}.

\bibitem{Sun15}
Y.~Sun.
On the uniqueness of nonnegative solutions of differential inequalities with gradient terms on Riemannian manifolds.
\emph{Commun. Pure Appl. Anal.} \textbf{14} (2015), 1743--1757.
\href{https://doi.org/10.3934/cpaa.2015.14.1743}{\nolinkurl{doi:10.3934/cpaa.2015.14.1743}}.

\bibitem{SX21}
Y.~Sun and F.~Xu.
Liouville's theorems to quasilinear differential inequalities involving gradient nonlinearity term on manifolds.
Preprint, \href{https://arxiv.org/abs/2102.02073}{arXiv:2102.02073} (2021).

\bibitem{SXX22}
Y.~Sun, J.~Xiao and F.~Xu.
A sharp Liouville principle for $\Delta_m u+u^p|\nabla u|^q\le0$ on geodesically complete noncompact Riemannian manifolds.
\emph{Math. Ann.} \textbf{384} (2022), 1309--1341.
\href{https://doi.org/10.1007/s00208-021-02311-6}{\nolinkurl{doi:10.1007/s00208-021-02311-6}}.

\bibitem{Tolksdorf84}
P.~Tolksdorf.
Regularity for a more general class of quasilinear elliptic equations.
\emph{J. Differential Equations} \textbf{51} (1984), no.~1, 126--150.
\href{https://doi.org/10.1016/0022-0396(84)90105-0}{\nolinkurl{doi:10.1016/0022-0396(84)90105-0}}.

\bibitem{WZ11}
X.~Wang and L.~Zhang.
Local gradient estimate for $p$-harmonic functions on Riemannian manifolds.
\emph{Comm. Anal. Geom.} \textbf{19} (2011), 759--771.
\href{https://doi.org/10.4310/CAG.2011.v19.n4.a4}{\nolinkurl{doi:10.4310/CAG.2011.v19.n4.a4}}.

\bibitem{WX16}
Y.~Wang and J.~Xiao.
A constructive approach to positive solutions of $\Delta_p u+f(u,\nabla u)\le0$ on Riemannian manifolds.
\emph{Ann. Inst. H. Poincar\'e C Anal. Non Lin\'eaire} \textbf{33} (2016), 1497--1507.
\href{https://doi.org/10.1016/j.anihpc.2015.06.003}{\nolinkurl{doi:10.1016/j.anihpc.2015.06.003}}.

\bibitem{WYZ26}
T.~Wu, J.~Yan and H.~Zhu.
The $m$-Laplace equation with a gradient term I: Liouville theorem in the second subcritical case.
Preprint, \href{https://arxiv.org/abs/2609.30006}{arXiv:2609.30006} (2026).

\bibitem{WYZ26II}
T.~Wu, J.~Yan and H.~Zhu.
The $m$-Laplace equation with a gradient term II: classification in the second critical case.
Preprint, \href{https://arxiv.org/abs/2609.33583}{arXiv:2609.33583} (2026).

\end{thebibliography}
\end{document}